\documentclass[twoside,11pt]{article}

\usepackage{blindtext}

%

%
%
%

\usepackage[preprint]{jmlr2e}
\usepackage{natbib}

\usepackage{hyperref}       
\usepackage{url}            
\usepackage{booktabs}       
\usepackage{amsfonts}       
\usepackage{bbding}
\usepackage{xcolor}         
\usepackage{algorithm}
\usepackage{algorithmic}
\usepackage{graphicx}
\usepackage{enumitem}
\usepackage{microtype}
\usepackage{color}
\usepackage{amsmath}
\usepackage{amsfonts,bm}


















\def\1{\bf{1}}
\let\norm\relax
\let\Norm\relax
\newcommand{\Norm}[1]{\left\| #1 \right\|}
\newcommand{\norm}[1]{\left\| #1 \right\|}

\def\vzero{{\bf{0}}}

\def\va{{\bf{a}}}

\def\ve{{\bf{e}}}

\def\vu{{\bf{u}}}
\def\vv{{\bf{v}}}
\def\vw{{\bf{w}}}
\def\vx{{\bf{x}}}
\def\vy{{\bf{y}}}
\def\vz{{\bf{z}}}


\def\fN{{\mathcal{N}}}

\def\fP{{\mathcal{P}}}

\def\fV{{\mathcal{V}}}


\def\BE{{\mathbb{E}}}

\def\BR{{\mathbb{R}}}

\def \P{{\bf{P}}}
\def\mA {{\bf A}}

\def\mE {{\bf E}}

\def\mG {{\bf G}}
\def\mH {{\bf H}}
\def\mI {{\bf I}}

\def\mL {{\bf L}}

\def\mP {{\bf P}}
\def\mQ {{\bf Q}}
\def\mR {{\bf R}}

\def\mU {{\bf U}}

\usepackage{etoolbox}


\makeatletter
\makeatother

\newtheorem{theorem}{Theorem}[section]
\newtheorem{proposition}[theorem]{Proposition}
\newtheorem{lemma}[theorem]{Lemma}
\newtheorem{corollary}[theorem]{Corollary}
\newtheorem{definition}[theorem]{Definition}
\newtheorem{assumption}[theorem]{Assumption}
\newtheorem{remark}[theorem]{Remark}

 
\def\vx {{{\bf x}}}
\def\vw {{{\bf w}}}
\def\vy {{{\bf y}}}
\def\va {{{\bf a}}}
\def\BR{{\mathbb{R}}}

\def\A{{\bf A}}
\def\B{{\bf B}}

\def\E{{\bf E}}
\def\e{{\bf e}}

\def\G{{\bf G}}
\def\H{{\bf H}}
\def\I{{\bf I}}

\def\LL{{\bf L}}

\def\Q{{\bf Q}}

\def\R{{\bf R}}

\def\S{{\bf S}}

\def\U{{\bf U}}
\def\u{{\bf u}}
\def\V{{\bf V}}
\def\v{{\bf v}}

\def\w{{\bf w}}

\def\x{{\bf x}}

\def\y{{\bf y}}
\def\Z{{\bf Z}}
\def\z{{\bf z}}
\def\0{{\bf 0}}
\def\1{{\bf 1}}

\def\OM{{\mathcal O}}

\def\RB{{\mathbb R}}
\def\EB{{\mathbb E}}

\def \mSigma{{\bf \Sigma}}

\def \srk{{\rm \text{SR-$k$}}}

\def \bfgs{{\rm \text{BlockBFGS}}}
\def \dfp{{\rm \text{BlockDFP}}}
\usepackage{algorithm}
\usepackage{algorithmic}
\usepackage{enumitem}
\usepackage{hhline}

\def \EBP #1{\EB\big[#1\big]}
\def \EBcommon #1{\EB[#1]}

\def \exp #1{{\rm exp}\left(#1\right)}
\def \tr #1{{\rm tr}\big(#1\big)}
\def \trcommon #1{{\rm tr}(#1)}

\usepackage{colortbl}
\usepackage[para]{threeparttable}
\usepackage{tcolorbox}
\usepackage{pifont}
\definecolor{mydarkgreen}{RGB}{39,130,67}
\definecolor{mydarkred}{RGB}{192,25,25}
\definecolor{bgcolor}{rgb}{0.93,0.99,1}
\definecolor{bgcolor2}{rgb}{0.8,1,0.8}
\definecolor{bgcolor3}{rgb}{0.50,0.90,0.50}


\usepackage{lastpage}
\jmlrheading{27}{2026}{1-\pageref{LastPage}}{1/26; Revised
6/26}{7/26}{26-0038}{Chengchang Liu, Cheng Chen, Luo Luo}
\ShortHeadings{title}{Liu, Chen, Luo}


\ShortHeadings{Symmetric Rank-$k$ Methods}{Liu, Chen, and Luo}
\firstpageno{1}

\begin{document}

\title{Symmetric Rank-$k$ Methods}

\author{\name Chengchang Liu \email liuchengchang@westlake.edu.cn \\
       \addr Department of Artificial Intelligence, Westlake University\\
       Hangzhou, China       \AND
       \name Cheng Chen \email chchen@sei.ecnu.edu.cn \\
       \addr Software Engineering Institute, East China Normal University\\
       Shanghai, China
  \AND
       \name Luo Luo\thanks{The corresponding author} \email luoluo@fudan.edu.cn \\
       \addr School of Data Science, Fudan University\\
       Shanghai, China \\
       and \\
       Shanghai Key Laboratory for Contemporary Applied Mathematics \\
       Shanghai, China}       
\editor{Michael Mahoney}

\maketitle

\begin{abstract}
This paper proposes a novel class of block quasi-Newton methods for convex optimization which we call symmetric rank-$k$ (SR-$k$) methods.
Each iteration of SR-$k$ incorporates the curvature information with~$k$ Hessian-vector products achieved from the greedy or random strategy.
We prove that SR-$k$ methods have the local superlinear convergence rate of~$\OM\big((1-k/d)^{t(t-1)/2}\big)$ for minimizing smooth and strongly convex functions, where $d$ is the problem dimension and $t$ is the iteration counter.
This is the first explicit superlinear convergence rate for block quasi-Newton methods, 
and it successfully explains why block quasi-Newton methods converge faster than ordinary quasi-Newton methods in practice.
We also leverage the idea of SR-$k$ methods to study the block BFGS and block DFP methods, showing their superior convergence rates.
\end{abstract}

\begin{keywords}
block quasi-Newton methods, Broyden family updates, superlinear convergence
\end{keywords}

\section{Introduction}

We study quasi-Newton methods for solving the minimization problem
\begin{align}\label{prob:main}
    \min_{\x\in\RB^d} f(\x),
\end{align}
where $f:\BR^d\to\BR$ is smooth and strongly convex. 
Quasi-Newton methods~\citep{broyden1970convergence2,broyden1970convergence,shanno1970conditioning, broyden1967quasi,davidon1991variable,byrd1987global,yuan1991modified,asl2024aj} are widely recognized for their fast convergence rates and efficient updates, which attract growing attention in many fields such as statistics~\citep{jamshidian1997acceleration, zhang2011quasi,bishwal2007parameter},  economics~\citep{ludwig2007gauss,li2013dynamic} and machine learning~\citep{goldfarb2020practical, hennig2013quasi,liu2022quasi,liu2022partial,lee2018distributed,qiu2023quasi}.
Unlike standard Newton methods, which need to compute the Hessian and its inverse, quasi-Newton methods go along the descent direction by the following scheme
\begin{align*}
    \x_{t+1}=\x_t-\G_t^{-1}\nabla f(\x_t),
\end{align*}
where $\G_t\in\BR^{d\times d}$ is an estimator of the Hessian  $\nabla^2 f(\x_t)$.
The most popular ways to construct the Hessian estimator are the Broyden family updates, including the Broyden--Fletcher--Goldfarb--Shanno (BFGS) method~\citep{broyden1970convergence2,broyden1970convergence,shanno1970conditioning}, the Davidon--Fletcher--Powell (DFP) method~\citep{davidon1991variable,fletcher1963rapidly}, and the symmetric rank-one (SR1) method~\citep{broyden1967quasi,davidon1991variable}. 

The classical quasi-Newton methods with Broyden family updates  {\citep{broyden1970convergence,broyden1970convergence2}} find the Hessian estimator $\G_{t+1}$ for the next round by the secant equation 
\begin{align}\label{eq:sec}
\G_{t+1}(\x_{t+1}-\x_t)=\nabla f(\x_{t+1})-\nabla f(\x_t).    
\end{align}

These methods have been proven to exhibit local superlinear convergence in the 1970s \citep{powell1971on,Dennis1974A,broyden1973on,dai2002convergence},
and their non-asymptotic superlinear rates were established in recent years~\citep{rodomanov2021rates,rodomanov2021new,ye2022towards,jin2022non,jin2022sharpened}.
For example, Rodomanov and Nesterov~\citep{rodomanov2021rates} showed the classical BFGS method enjoys the local superlinear rates of $\OM\big((d\varkappa/t)^{t/2}\big)$, where $\varkappa>1$ is the condition number. They also improved this result to $\OM\big((\exp{d\ln(\varkappa)/t}-1)^{t/2}\big)$ in consequent work \citep{rodomanov2021new}. 
Later, \cite{ye2022towards} showed the classical SR1 method converges with a local superlinear rate of $\OM\big((d\ln(\varkappa)/t)^{t/2}\big)$.

Some recent work \citep{gower2017randomized,rodomanov2021greedy,lin2021greedy,ji2023greedy} proposed another type of quasi-Newton methods, which construct the Hessian estimator by the following equation 
\begin{align}
\label{eq:grracondi}
    \G_{t+1}=\nabla^2 f(\x_{t+1}){\vu}_{t}
\end{align}
where ${\bf u}_t\in\RB^{d}$ is chosen by a greedy or randomized strategy.
\cite{rodomanov2021greedy} established a local superlinear rate of $\OM\big((1-{1}/{(\varkappa d)})^{t(t-1)/2}\big)$ for greedy quasi-Newton methods with Broyden family updates.
Later, \cite{lin2021greedy} provided a condition-number free superlinear rate of~$\OM\big((1-{1}/{d})^{t(t-1)/2}\big)$ for the greedy and randomized quasi-Newton methods with the specific BFGS and SR1 updates.

Block quasi-Newton methods construct the Hessian estimator along multiple directions at each iteration. 
The study of these methods dates back to the 1980s. Schnabel \citep{schnabel1983quasi} proposed the first block BFGS method by extending equation (\ref{eq:sec}) to multiple secant equations, i.e.,
$\G_{t+1}(\x_{t+1}-\x_{t+1-j})=\nabla f(\x_{t+1})-\nabla f(\x_{t+1-j})$
for all~$j=1,\dots,k$.
However, the resulting update of this method cannot guarantee that the Hessian estimator is symmetric, and it requires additionally modification to ensure the positive definiteness~\citep{schnabel1983quasi,o1994linear,gao2018block,lee2023almost}. 
Another line of research in 2010s~\citep{gao2018block,gower2016stochastic,gower2017randomized,kovalev2020fast,boutet2020secant} considers the variants of block BFGS methods that generalize condition~\eqref{eq:grracondi} to
\begin{align}
\label{eq:blockcondi}
    \G_{t+1}\mU_t=\nabla^2 f(\x_{t+1})\mU_t,
\end{align}
where $\mU_t=[\vu_t^{(1)},\cdots,\vu_t^{(k)}]\in\RB^{d\times k}$ indicates multiple directions constructed by deterministic or randomized methods. 
Since the Hessian-matrix product $\nabla^2 f(\x_{t+1})\mU_t$ can be efficiently achieved  by accessing the Hessian-vector products $\{\nabla^2 f(\x_{t+1})\vu_t^{(j)}\}_{j=1}^k$ in parallel {~\citep{gao2018block}}, these block BFGS methods usually exhibit better empirical performance than the ordinary quasi-Newton methods without block fashion updates.
\cite{gao2018block, kovalev2020fast} provided asymptotic local superlinear rates for block BFGS methods based on the condition~\eqref{eq:blockcondi}, 
but the advantage of block quasi-Newton methods over ordinary quasi-Newton methods is still unclear in theory. 
This naturally leads to the following question:

\textit{Can we provide a block quasi-Newton method with explicit superior convergence rate over the ordinary ones?}

In this paper, we give an affirmative answer by proposing symmetric rank-$k$ \mbox{(SR-$k$)} methods.
We design a novel block update to construct the Hessian estimator \mbox{$\mG_{t+1}\in\BR^{d\times d}$} that satisfies equation~\eqref{eq:blockcondi}.
We also provide the randomized and greedy strategies to determine directions in $\mU_t\in\BR^{d\times k}$ for SR-$k$ methods, which lead to the explicit local superlinear convergence rate of~$\OM\big((1-{k}/{d})^{t(t-1)/2}\big)$, where $k$~is the number of directions used to approximate the Hessian at each iteration.
For the case of~$k=1$, our methods reduce to randomized and greedy SR1 methods~\citep{lin2021greedy}.
For the case of~$k\geq 2$, it is clear that SR-$k$ methods have faster superlinear rates than existing greedy and randomized quasi-Newton methods~\citep{lin2021greedy,rodomanov2021greedy}.
For the case of $k=d$, it recovers the quadratic convergence rate like standard Newton's method.
We also follow the design of SR-$k$ to propose the variants of randomized block BFGS methods~\citep{gower2016stochastic,gower2017randomized,kovalev2020fast} and a randomized block DFP method, resulting in faster explicit superlinear rates.
We compare the proposed methods with existing quasi-Newton methods in Table~\ref{tbl:main-min}. 


\paragraph{Paper Organization}
The remainder of this paper is organized as follows.
In Section~\ref{sec:pre}, we introduce notations and assumptions throughout this paper. 
In Section~\ref{sec:update}, we propose the SR-$k$ update in the view of matrix approximation. 
In Section~\ref{sec:algorithm}, we propose the quasi-Newton methods with SR-$k$ updates for minimizing smooth strongly convex functions and provide their superior local superlinear convergence rates. 
In Section~\ref{sec:Block BFGS}, we propose the new randomized block BFGS methods and randomized block DFP method with explicit local superlinear convergence rates.
In Section~\ref{sec:exp}, we conduct numerical experiments to show the outperformance of the proposed methods.
Finally, we conclude our work in Section~\ref{sec:conclusion}.

\begin{table*}[!t]
	\centering
	\caption{\small We summarize local convergence rates of existing and proposed quasi-Newton methods for strongly convex optimization, where $\varkappa$ denotes the condition number of the objective and $t$ is the iteration counter. 
    The second column displays the number of secant equations used to construct the Hessian estimator.
    \label{tbl:main-min}} \vskip-0.2cm	
\begin{threeparttable}
\setlength\tabcolsep{5.pt}
\begin{tabular}{c c c}
\toprule[.1em]
\begin{tabular}{c} 
    \bf Methods  
\end{tabular} &
\bf  \#Equations & 
\begin{tabular}{c}  \bf Convergence \end{tabular} \\
\toprule[.1em]

\begin{tabular}{c}  Classical BFGS  \\
{\scriptsize \citep{rodomanov2021rates,rodomanov2021new}}
\end{tabular}
& $1$ 
&\begin{tabular}{c}
$\displaystyle{\OM\Big(\Big(\frac{d\varkappa}{t}\Big)^{t/2}\Big)}$\\ [0.25cm]
$\displaystyle{\OM\Big(\Big({\rm exp}\Big(\frac{d\ln(\varkappa)}{t}\Big)-1\Big)^{t/2}\Big)}$  
\end{tabular}   
\\  
\midrule 

\begin{tabular}{c}  Classical BFGS/DFP   \\
{\scriptsize \citep{jin2022non}}
\end{tabular}
& $1$ 
&\begin{tabular}{c}
$\displaystyle{\OM\Big(\Big (\frac{1}{t}\Big)^{t/2}\Big)}$~\tnote{(*)}  
\end{tabular}   
\\  
\midrule 
\begin{tabular}{c}  Classical DFP  \\{\scriptsize
\citep{rodomanov2021rates}}
\end{tabular}
& $1$ 
&\begin{tabular}{c}
$\displaystyle{\OM\Big(\Big(\frac{d\varkappa^2}{t}\Big)^{t/2}\Big)}$
\end{tabular}   
\\  
\midrule 

\begin{tabular}{c} Classical SR1 \\
{\scriptsize \citep{ye2022towards}}
\end{tabular}
& $1$ 
&  $\displaystyle{\OM\Big(\Big(\frac{d\ln(\varkappa)}{t}\Big)^{t/2}\Big)}$  
\\  
\midrule           
\begin{tabular}{c}
    Greedy/Randomized Broyden\\
   {\scriptsize \citep{rodomanov2021greedy,lin2021greedy}}
\end{tabular} 
& $1$
& $\displaystyle{\OM\Big(\Big(1-\frac{1}{\varkappa d}\Big)^{{t(t-1)}/{2}}\Big)}$  
\\  
\midrule

\begin{tabular}{c}
    Randomized BFGS\\
    {\scriptsize\citep{lin2021greedy}}
\end{tabular}
& $1$ 
& $\displaystyle{\OM\Big(\Big(1-\frac{1}{d}\Big)^{{t(t-1)}/{2}}\Big)}$     
\\
\midrule 

\begin{tabular}{c}
    Greedy/Randomized SR1 \\
  {\scriptsize  \citep{lin2021greedy}}
\end{tabular}
& $1$
& $\displaystyle{\OM\Big(\Big(1-\frac{1}{d}\Big)^{{t(t-1)}/{2}}\Big)}$  
\\
\midrule

\begin{tabular}{c}
Block-BFGS \\
{\scriptsize \citep{gao2018block}}
\end{tabular} 
& $k\in[d]$ 
& asymptotic superlinear 
\\  
\midrule 
\begin{tabular}{c}
Randomized Block-BFGS (v1) \\
{\scriptsize \citep{kovalev2020fast,gower2017randomized}}
\end{tabular} 
& $k\in[d]$ 
& asymptotic superlinear
\\  
\midrule
\cellcolor{bgcolor}	\begin{tabular}{c}
Greedy/Randomized SR-$k$ 	\\
Algorithm~\ref{alg:SRK} 
\end{tabular}
\cellcolor{bgcolor}
& 
\begin{tabular}{c} \\[-0.25cm]
    ~~\,$k\in[d-1]$   \\[0.15cm]
    ~\,$k=d$  
\end{tabular}
\cellcolor{bgcolor}	
& \cellcolor{bgcolor}	
\begin{tabular}{c} 
$\displaystyle\OM\Big(\Big(1-\frac{k}{d}\Big)^{{t(t-1)}/{2}}\Big)$ \\[0.2cm]
quadratic
\end{tabular}

\\[0.25cm]
\midrule
\cellcolor{bgcolor}	
\begin{tabular}{c}
Randomized Block-BFGS/DFP\\
Algorithm~\ref{alg:bfgs}
\end{tabular}
&\cellcolor{bgcolor}	 \begin{tabular}{c} \\[-0.25cm]
    ~$k\in[d-1]$   \\[0.15cm]
    $k=d$  
\end{tabular}
& \cellcolor{bgcolor}	\begin{tabular}{c}
$\displaystyle\OM\Big(\Big(1-\frac{k}{d\varkappa}\Big)^{{t(t-1)}/{2}}\Big)$ \\[0.2cm]
quadratic
\end{tabular} 
\\
\midrule
\cellcolor{bgcolor}	
\begin{tabular}{c}
Faster Randomized Block-BFGS  
\\
Algorithm~\ref{alg:fasterbfgs}
\end{tabular}
\cellcolor{bgcolor}	
& 
\begin{tabular}{c} \\[-0.25cm]
    ~~~\,$k\in[d-1]$   \\[0.15cm]
    ~~\,$k=d$  
\end{tabular}
\cellcolor{bgcolor}	
& \cellcolor{bgcolor}	
\begin{tabular}{c}
$\displaystyle\OM\Big(\Big(1-\frac{k}{d}\Big)^{{t(t-1)}/{2}}\Big)$ \\[0.2cm]
quadratic
\end{tabular}

\\[0.25cm]
\bottomrule[.1em]
\end{tabular} 
\begin{tablenotes}
    	{\scriptsize     
  			\item [{(*)}] This convergence rate requires an additional assumption that the initial Hessian estimator be sufficiently close to the exact Hessian~\cite[Corollary 3]{jin2022non}. \\
     }
\end{tablenotes}
\end{threeparttable}\vskip-0.55cm	
\end{table*}

\section{Preliminaries}
\label{sec:pre}
We first introduce the notations used in this paper. We use~$\{\e_1,\cdots,\e_d\}$ to present the standard basis in space $\RB^d$ and let $\I_d\in\RB^{d\times d}$ be the identity matrix. 
We use $\S^{\dag}$ to denote the Moore-Penrose inverse of a matrix $\S$.
We denote the trace of a square matrix by ${\rm tr}(\cdot)$. 
We use $\|\cdot\|$ to present the spectral norm of a matrix and the Euclidean norm of a vector. Given a positive definite matrix $\A\in\BR^{d\times d}$, we denote the corresponding weighted norm as~$\|\vx\|_\mA\triangleq(\vx^{\top}\mA\vx)^{1/2}$ for some $\vx\in\BR^d$. 
We use the notation~$\Norm{\vx}_\vz$ to present~$\|\vx\|_{\nabla^2 f(\vz)}$ for positive definite Hessian $\nabla^2 f(\vz)$, if there is no ambiguity for the reference function~$f(\cdot)$.
We also define
$\mE_{k}(\mA)\triangleq[\ve_{i_1};\cdots;\ve_{i_k}]\in\BR^{d\times k}$,
where $i_1,\dots,i_k$ are the indices for the largest $k$ diagonal entries of matrix $\mA\in\BR^{d\times d}$.

Throughout this paper, we suppose the objective in problem (\ref{prob:main}) satisfies the following assumptions.
\begin{assumption}
\label{ass:smooth}
We assume the objective $f:\BR^d\to\BR$ is $L$-smooth, i.e., there exists some constant~$L\geq 0$ such that $\|\nabla f(\vx)-\nabla f(\vy)\|\leq L\|\vx-\vy\|$ for any $\vx,\vy\in\BR^d.$
\end{assumption}
\begin{assumption}
\label{ass:strongconvex}
We assume the objective $f:\BR^d\to\BR$ is $\mu$-strongly convex, i.e., there exists some constant~$\mu>0$ such that 
\begin{align*}
f(\lambda\vx+(1-\lambda)\vy)\leq \lambda f(\vx) + (1-\lambda)f(\vy)-\frac{\lambda(1-\lambda)\mu}{2}\|\vx-\vy\|^2.  
\end{align*}
for any $\vx,\vy\in\BR^d$ and $\lambda\in[0,1]$.
\end{assumption}
We define the condition number as $\varkappa \triangleq L/\mu$. 
The following proposition shows the twice differentiable function has a bounded Hessian under Assumptions \ref{ass:smooth} and~\ref{ass:strongconvex}~\citep{nesterov2018lectures}.
\begin{proposition}
Suppose the objective $f:\BR^d\to\BR$ is twice differentiable and satisfies Assumptions~\ref{ass:smooth} and \ref{ass:strongconvex}, then it holds $\mu\mI_d \preceq \nabla^2 f(\x) \preceq L\mI_d$
for any $\x\in\BR^d$.
\end{proposition}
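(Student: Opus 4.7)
The proposition is a standard textbook fact, and the plan is to treat the two inequalities separately since they come from the two different assumptions.

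For the upper bound $\nabla^2 f(\x) \preceq L \mI_d$, my plan is to use the limit characterization of the Hessian together with $L$-smoothness. Fix any $\vx \in \BR^d$ and any unit vector $\vv \in \BR^d$. Since $\nabla^2 f$ is symmetric, $\|\nabla^2 f(\vx)\|$ equals $\sup_{\|\vv\|=1} |\vv^\top \nabla^2 f(\vx) \vv|$, so it suffices to bound this quadratic form. By definition of the Hessian,
\begin{align*}
    \vv^\top \nabla^2 f(\vx) \vv = \lim_{t \to 0^+} \frac{\vv^\top \big(\nabla f(\vx + t\vv) - \nabla f(\vx)\big)}{t}.
\end{align*}
Applying Cauchy--Schwarz to the numerator and then using Assumption~\ref{ass:smooth} gives $\vv^\top(\nabla f(\vx+t\vv)-\nabla f(\vx)) \leq \|\nabla f(\vx+t\vv)-\nabla f(\vx)\| \leq L t$. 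Dividing by $t$ and taking the limit yields $\vv^\top \nabla^2 f(\vx)\vv \leq L$. The same argument bounds $-\vv^\top\nabla^2 f(\vx)\vv$ from above by considering $-\vv$, establishing $\nabla^2 f(\vx) \preceq L\mI_d$.

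For the lower bound $\nabla^2 f(\vx) \succeq \mu \mI_d$, I would introduce the auxiliary function $g(\vx) \triangleq f(\vx) - \frac{\mu}{2}\|\vx\|^2$ and show that the strong convexity inequality of Assumption~\ref{ass:strongconvex} is exactly equivalent to plain convexity of $g$. The key identity to verify is
\begin{align*}
    \lambda\|\vx\|^2 + (1-\lambda)\|\vy\|^2 - \|\lambda\vx+(1-\lambda)\vy\|^2 = \lambda(1-\lambda)\|\vx-\vy\|^2,
\end{align*}
after which substituting into the definition of $g$ and using Assumption~\ref{ass:strongconvex} gives $g(\lambda\vx+(1-\lambda)\vy) \leq \lambda g(\vx)+(1-\lambda)g(\vy)$ for all $\lambda \in [0,1]$. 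Since $g$ is convex and twice differentiable, $\nabla^2 g(\vx) \succeq \vzero$, which rearranges to $\nabla^2 f(\vx) \succeq \mu \mI_d$.

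There is no real obstacle here; the only subtlety is that the strong convexity is stated in the Jensen (secant) form rather than in the more common gradient form $\langle \nabla f(\vx)-\nabla f(\vy), \vx-\vy\rangle \geq \mu\|\vx-\vy\|^2$, so I prefer the ``subtract a quadratic'' trick over trying to Taylor-expand the Jensen inequality directly, which would force me to manage second-order remainders at two different points simultaneously. Combining the two bounds completes the proof.
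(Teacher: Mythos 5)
Your proof is correct, but there is nothing in the paper to compare it against: the paper states this proposition without proof, treating it as a standard consequence of Assumptions~\ref{ass:smooth} and~\ref{ass:strongconvex}. Your two-part argument is a sound way to fill that gap. The upper bound via the difference-quotient characterization $\vv^\top\nabla^2 f(\vx)\vv=\lim_{t\to 0^+}t^{-1}\vv^\top(\nabla f(\vx+t\vv)-\nabla f(\vx))$, Cauchy--Schwarz, and the Lipschitz bound of Assumption~\ref{ass:smooth} is fine, and the lower bound via $g(\vx)=f(\vx)-\tfrac{\mu}{2}\|\vx\|^2$ together with the identity $\lambda\|\vx\|^2+(1-\lambda)\|\vy\|^2-\|\lambda\vx+(1-\lambda)\vy\|^2=\lambda(1-\lambda)\|\vx-\vy\|^2$ is exactly the standard equivalence between the Jensen form of $\mu$-strong convexity and convexity of $g$, from which $\nabla^2 g(\vx)\succeq\vzero$ gives $\nabla^2 f(\vx)\succeq\mu\mI_d$ (both parts implicitly use twice differentiability, which the proposition already presupposes by referring to $\nabla^2 f$). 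One small slip: replacing $\vv$ by $-\vv$ leaves the quadratic form unchanged, so it does not bound $-\vv^\top\nabla^2 f(\vx)\vv$; this is immaterial here, since the claim $\nabla^2 f(\vx)\preceq L\mI_d$ only needs the one-sided bound $\vv^\top\nabla^2 f(\vx)\vv\leq L$ for all unit $\vv$ (and nonnegativity of the quadratic form follows anyway from convexity), but the sentence invoking $\sup_{\|\vv\|=1}|\vv^\top\nabla^2 f(\vx)\vv|$ is a red herring and could simply be dropped.
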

We also impose the assumption of strongly self-concordance~\citep{rodomanov2021greedy,lin2021greedy} as follows.
\begin{assumption}
\label{ass:strongself}
We assume the objective $f:\RB^d\to\RB$ is $M$-strongly self-concordant, i.e., there exists some constant $M\geq 0$ such that 
\begin{align*}
   \nabla^2 f(\y)-\nabla^2 f(\x)\preceq M\|\y-\x\|_\z\nabla^2 f(\w) 
\end{align*}
holds for all $\x,\y,\w,\z\in\RB^d$.
\end{assumption}

Noticing that the strongly convex function with Lipschitz continuous Hessian is strongly self-concordant~\citep{rodomanov2021greedy}.

\begin{proposition}
Suppose the objective $f:\BR^d\to\BR$ satisfies Assumption \ref{ass:strongconvex} and has $L_2$-Lipschitz continuous Hessian, i.e., for some $L_2\geq 0$, $\|\nabla^2 f(\x)-\nabla^2 f(\y)\|\leq L_2\|\x-\y\|$ holds
for all $\x$, $\y\in\RB^d$, then the function~$f(\cdot)$ is $M$-strongly self-concordant with $M={L_2}/{\mu^{3/2}}$.
\end{proposition}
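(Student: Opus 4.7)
The plan is to chain together three elementary ingredients: the Hessian Lipschitz bound in the ambient operator norm, the lower bound on the Hessian coming from strong convexity (used twice, in two different places), and a conversion between the Euclidean norm and the weighted norm $\|\cdot\|_\z$.

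First I would start from the $L_2$-Lipschitz continuity of the Hessian. For any $\x,\y\in\BR^d$, $\|\nabla^2 f(\y)-\nabla^2 f(\x)\|\leq L_2\|\y-\x\|$. Since $\nabla^2 f(\y)-\nabla^2 f(\x)$ is symmetric, this spectral norm bound upgrades to the Loewner inequality
\begin{align*}
\nabla^2 f(\y)-\nabla^2 f(\x)\preceq L_2\|\y-\x\|\,\mI_d.
\end{align*}

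Next I would use $\mu$-strong convexity in two ways. On the one hand, $\nabla^2 f(\z)\succeq \mu\mI_d$ implies $\|\y-\x\|_\z^2=(\y-\x)^\top \nabla^2 f(\z)(\y-\x)\geq \mu\|\y-\x\|^2$, hence $\|\y-\x\|\leq \mu^{-1/2}\|\y-\x\|_\z$. On the other hand, $\nabla^2 f(\w)\succeq \mu\mI_d$ gives $\mI_d\preceq \mu^{-1}\nabla^2 f(\w)$. Substituting both bounds into the previous display and using the fact that scaling a positive semidefinite matrix by a nonnegative scalar preserves the Loewner order yields
\begin{align*}
\nabla^2 f(\y)-\nabla^2 f(\x)\preceq L_2\cdot\mu^{-1/2}\|\y-\x\|_\z\cdot \mu^{-1}\nabla^2 f(\w)=\frac{L_2}{\mu^{3/2}}\|\y-\x\|_\z\,\nabla^2 f(\w),
\end{align*}
which is exactly the definition of $M$-strong self-concordance with $M=L_2/\mu^{3/2}$.

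I do not anticipate a real obstacle here: the only subtle point is making sure the symmetric-matrix inequality in the first step is legitimate (so that we can view the spectral norm bound as a two-sided Loewner bound), and that the two $\mu$-strong convexity applications at $\w$ and at $\z$ are handled independently, so the argument works for arbitrary $\x,\y,\w,\z\in\BR^d$ as required by Assumption~\ref{ass:strongself}.
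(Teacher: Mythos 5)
Your proof is correct, and it is the standard argument for this fact (the paper itself states the proposition without proof, as it is a known result from the greedy quasi-Newton literature, cf.\ \citet{rodomanov2021greedy}). The chain you use — upgrading the spectral-norm bound on the symmetric matrix $\nabla^2 f(\y)-\nabla^2 f(\x)$ to a Loewner bound $\preceq L_2\|\y-\x\|\,\mI_d$, then invoking $\nabla^2 f(\z)\succeq\mu\mI_d$ to get $\|\y-\x\|\leq\mu^{-1/2}\|\y-\x\|_\z$ and $\nabla^2 f(\w)\succeq\mu\mI_d$ to replace $\mI_d\preceq\mu^{-1}\nabla^2 f(\w)$ — handles the arbitrary points $\x,\y,\w,\z$ exactly as Assumption~\ref{ass:strongself} requires, and yields $M=L_2/\mu^{3/2}$ with no gaps.
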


\section{Symmetric Rank-$k$ Updates}
\label{sec:update}
We propose the symmetric rank-$k$ (SR-$k$) update for matrix approximation as follows.
\begin{definition}[SR-$k$ Update]
Let $\mA\in\BR^{d\times d}$ and $\mG\in\BR^{d\times d}$ be two positive-definite matrices with~$\A\preceq \G$. 
For any matrix $\U\in\RB^{d\times k}$, we define
\begin{align}
\label{eq:srk}    
    {\text{\rm SR-$k$}}(\G,\A,\U) \triangleq \G-(\G-\A)\U\big(\U^{\top}(\G-\A)\U\big)^{\dag}\U^{\top}(\G-\A).
\end{align}
\end{definition}
Noticing that the formula of \srk~update contains a term of Moore-Penrose inverse, since the matrix $\U^{\top}(\G-\A)\U$ is possibly singular even if matrices $\mA,\mG\in\BR^{d\times d}$, and~$\mU\in\BR^{d\times k}$ are full rank.
This leads to its design and analysis be quite different from other block-type updates, i.e., block BFGS/DFP updates we study in Section~\ref{sec:Block BFGS}.

In this section, 
we always use $\G_{+}$ to denote the output of \srk~update such that 
\begin{align}\label{iter:srk}
\G_{+}\triangleq\srk(\G,\A,\U).     
\end{align}
We provide the following lemma to show that \srk~update does not increase the deviation from target matrix $\A$. 
\begin{lemma}
\label{lm:sr1good}
Given any positive-definite matrices $\mA\in\BR^{d\times d}$, $\mG\in\BR^{d\times d}$ with~$\A\preceq\G\preceq\eta \A$ for some~$\eta\geq 1$, then $\G_{+}$ defined by \eqref{iter:srk} holds that
\begin{align}\label{eq:srk_good}
   \A\preceq\G_{+}\preceq \eta\A. 
\end{align}
\end{lemma}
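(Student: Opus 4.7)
The plan is to recognize the SR-$k$ correction as an orthogonal projection, which immediately yields both inequalities. Let me set $\mP \triangleq \mG - \mA$, which is positive semi-definite by the hypothesis $\mA \preceq \mG$, so its positive semi-definite square root $\mP^{1/2}$ exists. Define $\mW \triangleq \mP^{1/2} \mU \in \BR^{d \times k}$, and observe that $\mU^\top (\mG - \mA) \mU = \mW^\top \mW$ and $(\mG - \mA)\mU = \mP^{1/2} \mW$. Substituting these identities into the definition of the SR-$k$ update gives the factored form
\begin{equation*}
\mG_{+} - \mA \;=\; \mP \;-\; \mP^{1/2}\bigl[\mW(\mW^\top \mW)^{\dag}\mW^\top\bigr]\mP^{1/2}.
\end{equation*}

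The bracketed quantity $\mW(\mW^\top \mW)^{\dag}\mW^\top$ is precisely the orthogonal projector onto the range of $\mW$; this is a standard fact which I would verify by a thin-SVD $\mW = \mQ \mSigma \mV^\top$ computation showing it equals $\mQ\mQ^\top$ even when $\mW^\top \mW$ is singular (so the pseudoinverse is needed). Denoting this projector by $\mPi$, we have $\mathbf{0} \preceq \mPi \preceq \mI_d$, hence $\mathbf{0} \preceq \mI_d - \mPi \preceq \mI_d$, and therefore
\begin{equation*}
\mG_{+} - \mA \;=\; \mP^{1/2}(\mI_d - \mPi)\mP^{1/2}
\end{equation*}
is sandwiched as $\mathbf{0} \preceq \mG_{+} - \mA \preceq \mP^{1/2}\mP^{1/2} = \mG - \mA$. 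The left inequality yields $\mA \preceq \mG_{+}$, the lower bound in \eqref{eq:srk_good}.

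For the upper bound, the sandwich above gives $\mG_{+} \preceq \mG$, and combining with the hypothesis $\mG \preceq \eta \mA$ yields $\mG_{+} \preceq \eta \mA$. This completes the chain $\mA \preceq \mG_{+} \preceq \eta \mA$.

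The main subtlety, and the only part requiring care, is the handling of the Moore--Penrose pseudoinverse when $\mU^\top(\mG-\mA)\mU$ fails to be invertible; the proof must not tacitly assume nonsingularity. The SVD calculation above sidesteps this cleanly, since $\mW(\mW^\top\mW)^\dag\mW^\top$ is the orthogonal projector onto $\mathrm{range}(\mW)$ regardless of rank. Everything else is a direct consequence of the projection identity and the assumption $\mA \preceq \mG$.
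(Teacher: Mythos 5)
Your proof is correct and follows essentially the same route as the paper: both arguments rewrite $\G_{+}-\A$ as a congruence of the positive semi-definite matrix $\G-\A$ (the paper via $(\I_d-(\G-\A)\U(\U^{\top}(\G-\A)\U)^{\dag}\U^{\top})(\G-\A)(\cdots)^{\top}$, you via $(\G-\A)^{1/2}(\I_d-\mPi)(\G-\A)^{1/2}$ with $\mPi$ an orthogonal projector), and both get the upper bound from the fact that the subtracted correction term is positive semi-definite together with $\G\preceq\eta\A$. Your square-root/projector formulation is a minor repackaging whose only added value is making the rank-deficient (pseudoinverse) case explicit, which the paper's congruence identity already handles implicitly.
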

\begin{proof}
According to the update rule (\ref{eq:srk}), we have
\begin{align*}
    &\G_{+}-\A= (\G-\A)-(\G-\A)\U(\U^{\top}(\G-\A)\U)^{\dag}\U^{\top}(\G-\A)\\
    &=\left(\I_d\!-\!(\G\!-\!\A)\U(\U^{\top}(\G\!-\!\A)\U)^{\dag}\U^{\top}\right)(\G-\A)\left(\I_d\!-\!\U(\U^{\top}(\G\!-\!\A)\U)^{\dag}\U^{\top}(\G\!-\!\A)\right)\\
    &\succeq\0, 
\end{align*}
which indicates $\G_{+}\succeq \A$.
On the other hand, the condition $\mG\preceq\eta\mA$ indicates
\begin{align*}
    \G_{+}&\preceq \eta\A - (\G-\A)\U(\U^{\top}(\G-\A)\U)^{\dag}\U^{\top}(\G-\A)\preceq \eta\A,
\end{align*}
where the last inequality is by the condition $\G\succeq \A$.
Hence, we finish the proof.
\end{proof}
To evaluate the convergence of SR-$k$ update, we introduce the quantity~\citep{lin2021greedy,ye2022towards}
 \begin{align}
 \label{eq:measure_srk}
     \tau_\A(\G)\triangleq\trcommon{\G-\A},
 \end{align}
to describe the difference between the target matrix $\mA$ and the current estimator $\mG$.

In the remainder of this section, we aim to establish the following convergence guarantee  
\begin{align}
\label{eq:srk-aim}
    \EBcommon{\tau_{\A}(\G_{+})}\leq \left(1-\frac{k}{d}\right)\tau_{\A}(\G)
\end{align}
for approximating the target matrix $\mA\in\BR^{d\times d}$ by \srk~iteration (\ref{iter:srk}) with the appropriate choice of (random) matrix $\mU\in\BR^{d\times k}$.
Note that if we take $k=1$, the equation (\ref{eq:srk-aim}) will reduce to $ \EBcommon{\tau_{\A}(\G_{+})}\leq \left(1-{1}/{d}\right)\tau_{\A}(\G)$, which corresponds to the convergence result of randomized and greedy SR1 updates~\cite{lin2021greedy}.

Observe that we can split $\tau_{\A}(\G_{+})$ as follows
\begin{align}
\label{eq:tauAG+}
\begin{split}
    \tau_{\A}(\G_{+}) &\!\overset{\eqref{eq:srk},\, \eqref{iter:srk}}{=}\tr{\G-\A - (\G-\A)\U(\U^{\top}(\G-\A)\U)^{\dag}\U^{\top}(\G-\A)} \\
    &\,\,\,\,=\,\,\,\,\,\underbrace{\trcommon{\G-\A}}_{\text{Part}~{\rm \uppercase\expandafter{\romannumeral1}}} - \underbrace{\tr{(\G-\A)\U(\U^{\top}(\G-\A)\U)^{\dag}\U^{\top}(\G-\A)}}_{\text{Part}~{\rm \uppercase\expandafter{\romannumeral2}}}.
    \end{split}
\end{align}
Since Part I is equal to $\tau_{\A}(\G)$ and Part II is nonnegative, the \srk~update can reduce the estimation error with regard to $\tau_\A(\cdot)$.
To obtain \eqref{eq:srk-aim}, we only need to prove 
\begin{align*}
\EBP{\tr{(\G-\A)\U(\U^{\top}(\G-\A)\U)^{\dag}\U^{\top}(\G-\A)}}\geq \frac{k}{d}\trcommon{\G-\A}.
\end{align*}
We first provide the following lemma to bound Part II (in the view of $\mR=\mG-\mA$). 
\begin{lemma}
\label{lm:pdneq2}
For a symmetric positive semi-definite matrix $\R\in\RB^{d\times d}$ and a full rank matrix $\U\in\RB^{d\times k}$ with $k\leq d$, it holds that
\begin{align}
\label{eq:keyeq}  \tr{\R\U\left(\U^{\top}\R\U\right)^{\dag}\U^{\top}\R} \geq \tr{\U\left(\U^{\top}\U\right)^{-1}\U^{\top}\R}.
\end{align}
\end{lemma}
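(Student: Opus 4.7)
My plan is to compress the inequality to an operator Cauchy-Schwarz-type statement in the $k \times k$ world via the cyclic property of the trace, and then prove the compressed inequality by a one-line Gram matrix / Schur complement argument.

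Introduce the three $k \times k$ matrices $\B = \U^\top \R \U$, $\C = \U^\top \R^2 \U$, and $\D = \U^\top \U$ (which is invertible because $\U$ has full column rank). Cyclic invariance of the trace immediately rewrites the left-hand side of \eqref{eq:keyeq} as $\tr{\B^\dag \C}$ and the right-hand side as $\tr{\D^{-1}\B}$, so the claim reduces to
\[
\tr{\B^\dag \C} \geq \tr{\D^{-1}\B}.
\]

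The crux is the operator inequality $\C \succeq \B \D^{-1} \B$, a matrix Cauchy-Schwarz for the positive linear map $X \mapsto \U^\top X \U$. I would derive it by observing that the block matrix
\[
\begin{pmatrix} \D & \B \\ \B & \C \end{pmatrix} = \begin{pmatrix} \U^\top \\ \U^\top \R \end{pmatrix}\begin{pmatrix} \U & \R\U \end{pmatrix}
\]
is a Gram matrix, hence PSD; the Schur complement with respect to $\D$ then gives $\C - \B\D^{-1}\B \succeq \0$. Equivalently, setting $\P_\U = \U\D^{-1}\U^\top$, one has the one-line algebraic identity $\C - \B\D^{-1}\B = [(\I_d - \P_\U)\R\U]^\top (\I_d - \P_\U)\R\U$, which manifests the difference as a Gram matrix.

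Combining $\C - \B\D^{-1}\B \succeq \0$ with $\B^\dag \succeq \0$ yields $\tr{\B^\dag \C} \geq \tr{\B^\dag \B \D^{-1} \B}$, and the Penrose identity $\B\B^\dag \B = \B$, applied via one more round of cyclicity, collapses the right-hand side to $\tr{\D^{-1}\B}$. The case where $\B$ is singular (which can happen when $\R$ is only PSD rather than PD) is absorbed uniformly by the same Penrose identity, so no separate case analysis of $\mathrm{null}(\R) \cap \mathrm{range}(\U)$ is needed, and I do not anticipate any substantive obstacle to this plan.
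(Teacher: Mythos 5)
Your proposal is correct, and every step checks out: the cyclic reductions to $\tr{\B^\dag\C}$ and $\tr{\D^{-1}\B}$ are valid, the block Gram matrix argument (or equivalently the identity $\C-\B\D^{-1}\B=[(\I_d-\P_\U)\R\U]^{\top}(\I_d-\P_\U)\R\U$) does give $\C\succeq\B\D^{-1}\B$, and the final collapse via $\B\B^\dag\B=\B$ handles singular $\B$ with no case split. Your route differs from the paper's in its mechanics: the paper takes an SVD $\U=\Q\mSigma\V^{\top}$, works with $d\times d$ traces, splits the left-hand side using the projector $\I_d-\Q\Q^{\top}$, discards the nonnegative complementary term, and then uses the Penrose identity to recover $\tr{\Q^{\top}\R\Q}$, which it separately identifies with the right-hand side. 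Your version compresses everything to $k\times k$ matrices up front and packages the key estimate as an operator Cauchy--Schwarz inequality proved by a Schur complement, which avoids the SVD and the $\mSigma^{-1}$ insertions entirely. It is worth noting, though, that the two arguments discard exactly the same quantity: your dropped term $\tr{\B^\dag(\C-\B\D^{-1}\B)}=\tr{\B^\dag\U^{\top}\R(\I_d-\P_\U)\R\U}$ is precisely the paper's discarded trace involving $(\I_d-\Q\Q^{\top})^{1/2}$, and both proofs finish with the same identity $\B\B^\dag\B=\B$. So the underlying mechanism coincides; what your formulation buys is a shorter, coordinate-free write-up in which the full-rank hypothesis enters only through the invertibility of $\D$ and the possible degeneracy of $\B$ is absorbed automatically, whereas the paper's SVD route makes the reduction to an orthonormal frame explicit at the cost of heavier bookkeeping.
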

\begin{proof}
Let $\U=\Q\mSigma\V^{\top}$ be the reduced SVD of $\mU\in\BR^{d\times k}$, where $\Q\in\RB^{d\times k}$, $\V\in\RB^{k\times k}$ are (column) orthonormal (i.e. $\Q^{\top}\Q=\I_k$ and $\V^{\top}\V=\I_k$) and~$\mSigma\in\RB^{k\times k}$ is diagonal, then the right-hand side of inequality (\ref{eq:keyeq}) can be written as
\begin{align*}
\tr{\U\left(\U^{\top}\U\right)^{-1}\U^{\top}\R}& = \tr{\Q\mSigma\V^{\top}\left(\V\mSigma^2\V^{\top}\right)^{-1}\V\mSigma\Q^{\top}\R}  =\tr{\Q\Q^{\top}\R}.
\end{align*}
Consequently, we upper bound the left-hand side of inequality (\ref{eq:keyeq}) as follows 
\begin{align*}
    &\tr{\R\U\left(\U^{\top}\R\U\right)^{\dag}\U^{\top}\R} = \tr{\R\Q\mSigma\V^{\top}\left(\V\mSigma\Q^{\top}\R\Q\mSigma\V^{\top}\right)^{\dag}\V\mSigma\Q^{\top}\R}\\    &=\tr{\Q^{\top}\R\Q\mSigma\V^{\top}\left(\V\mSigma\Q^{\top}\R\Q\mSigma\V^{\top}\right)^{\dag}\V\mSigma\Q^{\top}\R\Q}\\
    &~~~+\tr{\big(\I_d-\Q\Q^{\top}\big)^{1/2}\R\Q\mSigma\V^{\top}\left(\V\mSigma\Q^{\top}\R\Q\mSigma\V^{\top}\right)^{\dag}\V\mSigma\Q^{\top}\R\big(\I_d-\Q\Q^{\top}\big)^{1/2}}
    \\
    &\geq \tr{\Q^{\top}\R\Q\mSigma\V^{\top}\left(\V\mSigma\Q^{\top}\R\Q\mSigma\V^{\top}\right)^{\dag}\V\mSigma\Q^{\top}\R\Q}\\
&=\tr{(\V\mSigma)^{-1}\V\mSigma\Q^{\top}\R\Q\mSigma\V^{\top}\left(\V\mSigma\Q^{\top}\R\Q\mSigma\V^{\top}\right)^{\dag}\V\mSigma\Q^{\top}\R\Q\mSigma\V^\top\big(\mSigma\V^\top\big)^{-1}}\\
&=  \tr{(\V\mSigma)^{-1}\V\mSigma\Q^{\top}\R\Q\mSigma\V^\top\big(\mSigma\V^\top\big)^{-1}} 
= \tr{\Q^{\top}\R\Q},
\end{align*}
where 
the inequality is due to  
$\Q$ is column orthonormal (thus $\I_d-\Q\Q^{\top}\succeq \0$) and  we have $\R\Q\mSigma\V^\top\left(\V\mSigma\Q^{\top}\R\Q\mSigma\V^\top\right)^{\dag}\V\mSigma\Q^{\top}\R$ is positive semi-definite.
We connect above results to finish the proof. 
\end{proof}
We  provide two strategies for selecting matrix $\mU\in\BR^{d\times k}$ of the \srk~update:
\begin{enumerate}[label=(\alph*),topsep=1.5pt, leftmargin=1.2cm,itemsep=0.12cm]
\item For the randomized strategy, we construct matrix $\mU\in\BR^{d\times k}$ by sampling each of its entries  
 according to the standard normal distribution independently, i.e., $ [\mU]_{ij} \overset{{\rm i.i.d}}{\sim} \fN(0,1)$ for all $i\in[d]$ and $j\in[k]$.
\item For the greedy strategy, we construct matrix  $\mU\in\BR^{d\times k}$ as $\U=\mE_{k}(\G-\A)$,
where $\E_k(\cdot)$ follows the definition in Section \ref{sec:pre}. 
\end{enumerate}
The following lemma indicates that the term of  $\tr{\U(\U^{\top}\U)^{-1}\U\R}$ in inequality \eqref{eq:keyeq} can be further lower bounded when we choose $\U\in\BR^{d\times k}$ by the above randomized or greedy strategy, which guarantees a sufficient decrease of~$\tau_{\A}(\cdot)$ for \srk~updates.
\begin{lemma}
\label{lm:explicitbound}
\srk~updates with randomized and greedy strategies  have the following properties:
\begin{enumerate}[label=(\alph*),topsep=0pt, leftmargin=1.2cm,itemsep=0.15cm]
\item If $\U\in\BR^{d\times k}$ is chosen as $[\mU]_{ij} \overset{{\rm i.i.d}}{\sim} \fN(0,1)$, then we have    
\begin{align}\label{eq:random-up}         \EBP{\tr{\U\left(\U^{\top}\U\right)^{-1}\U^{\top}\R}} = \frac{k}{d} \trcommon{\R}
\end{align}
for any matrix $\R\in\BR^{d\times d}$.     
\item If $\U\in\BR^{d\times k}$ is chosen as $\U=\E_k(\R)$, then we have
\begin{align}\label{eq:greedy-up}
{\tr{\U\left(\U^{\top}\U\right)^{-1}\U^{\top}\R}} \geq \frac{k}{d} \trcommon{\R}
\end{align}
for any matrix $\R\in\BR^{d\times d}$.
\end{enumerate}
\end{lemma}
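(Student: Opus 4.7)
The plan is to treat the two parts separately, since each rests on a different structural feature of the construction of $\mU$.

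For part (a), the key observation is that $\mP \triangleq \mU(\mU^{\top}\mU)^{-1}\mU^{\top}$ is (almost surely) the orthogonal projector onto the $k$-dimensional column space of $\mU$. When the entries of $\mU$ are i.i.d.\ standard Gaussian, this column space is uniformly distributed on the Grassmannian of $k$-planes in $\BR^d$. I would first argue by rotational invariance: for any orthogonal $\Q\in\BR^{d\times d}$, the matrix $\Q\mU$ has the same distribution as $\mU$, so $\Q\mP\Q^{\top}$ has the same distribution as $\mP$. Hence $\EBP{\mP}$ commutes with every orthogonal matrix, which forces $\EBP{\mP}=c\I_d$ for some scalar $c$. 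Taking traces gives $cd=\EBP{\tr{\mP}}=k$, so $\EBP{\mP}=(k/d)\I_d$. Equation \eqref{eq:random-up} then follows from the linearity of the trace and expectation:
\begin{align*}
\EBP{\tr{\mU(\mU^\top\mU)^{-1}\mU^\top\R}}=\tr{\EBP{\mP}\R}=\frac{k}{d}\tr{\R}.
\end{align*}

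For part (b), the construction $\U=\E_k(\R)$ picks out $k$ distinct standard basis vectors, so $\U^{\top}\U=\I_k$ and the projector simplifies nicely:
\begin{align*}
\mU(\mU^{\top}\mU)^{-1}\mU^{\top}=\sum_{j=1}^{k}\ve_{i_j}\ve_{i_j}^{\top}.
\end{align*}
Therefore
\begin{align*}
\tr{\U(\U^\top\U)^{-1}\U^\top\R}=\sum_{j=1}^{k}\ve_{i_j}^{\top}\R\ve_{i_j}=\sum_{j=1}^{k}[\R]_{i_j i_j},
\end{align*}
i.e., the sum of the $k$ largest diagonal entries of $\R$. Since $\R=\mG-\mA\succeq \mathbf{0}$ by the hypothesis $\mA\preceq\mG$, all diagonal entries of $\R$ are nonnegative, so the average of the top $k$ diagonal entries is at least the average of all $d$ of them, giving
\begin{align*}
\sum_{j=1}^{k}[\R]_{i_j i_j}\geq \frac{k}{d}\sum_{i=1}^{d}[\R]_{ii}=\frac{k}{d}\tr{\R},
\end{align*}
which is \eqref{eq:greedy-up}.

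I do not expect either part to pose a serious obstacle: the whole content of the lemma is that the greedy selection beats the average while the Gaussian selection hits the average exactly. The only subtle point worth double-checking is the rotational-invariance argument in part (a) — one must verify that $(\Q\mU)^{\top}(\Q\mU)=\mU^{\top}\mU$ so that $\Q\mU(\mU^{\top}\mU)^{-1}(\Q\mU)^{\top}=\Q\mP\Q^{\top}$, and that the Moore--Penrose pseudo-inverse can be replaced by an honest inverse because $\mU$ has full column rank almost surely. Both of these are immediate but worth stating explicitly.
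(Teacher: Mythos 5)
Your proposal is correct, and part (b) coincides with the paper's argument: both reduce $\tr{\U(\U^{\top}\U)^{-1}\U^{\top}\R}$ to the sum of the $k$ largest diagonal entries of $\R$ and compare it with $\frac{k}{d}\tr{\R}$. For part (a) the overall structure is also the same — everything hinges on the identity $\EB\left[\U(\U^{\top}\U)^{-1}\U^{\top}\right]=\frac{k}{d}\I_d$ followed by linearity of trace and expectation — but you establish that identity by a different and self-contained route: orthogonal invariance of the Gaussian ensemble gives $\Q\,\U(\U^{\top}\U)^{-1}\U^{\top}\Q^{\top}\overset{d}{=}\U(\U^{\top}\U)^{-1}\U^{\top}$ for every orthogonal $\Q$, so the expectation commutes with the orthogonal group and must be $c\I_d$, and $c=k/d$ follows from $\tr{\cdot}=k$ almost surely. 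The paper instead cites Chikuse's results on the uniform distribution over the Stiefel manifold $\fV_{d,k}$ and rank-$k$ projection matrices (Theorems 2.2.1–2.2.2), and separately gives an elementary inductive computation via block matrix inversion in Appendix~\ref{appen:addiproof}; your symmetry argument avoids both the external reference and the induction, at the small cost of needing the standard fact that only scalar matrices commute with all orthogonal matrices (and the almost-sure full column rank of $\U$, which you correctly flag). One minor point in (b): the appeal to $\R=\G-\A\succeq\0$ is unnecessary — the average of the $k$ largest diagonal entries dominates the overall average for arbitrary real entries — and since the lemma is stated for any matrix $\R$ (and is later invoked with $\R=\A^{-1}(\G-\A)$ in the block BFGS/DFP analysis), it is cleaner to drop that remark.
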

\begin{proof}
We first consider the randomized strategy such that each entry of the matrix $\mU\in\BR^{d\times k}$ is independently sampled from~$\fN(0,1)$.
We use $\fV_{d,k}$ to present the Stiefel manifold which is the set of all $d\times k$ column orthogonal matrices and denote~$\fP_{k,d-k}$ as the set of all $m\times m$ orthogonal projection matrices of rank $k$.

According to Theorem 2.2.1 (iii) and Theorem 2.2.2 (iii) of \cite{chikuse2003statistics}, the random matrix $ \Z=\U(\U^{\top}\U)^{-1/2}$
is uniformly distributed on $\fV_{d,k}$ and the random matrix
$  \Z\Z^\top=\U(\U^{\top}\U)^{-1}\U^{\top} $
is uniformly distributed on $\fP_{k,d-k}$.
Then, applying Theorem~2.2.2~(i) of \cite{chikuse2003statistics} on matrix $\Z\Z^{\top}$ achieves
\begin{align}\label{eq:EP}    \EB\left[\U(\U^{\top}\U)^{-1}\U^{\top}\right] = \frac{k}{d}\I_d.
\end{align}
Consequently, we have
\begin{align*}
    \EB \left[\tr{\U\left(\U^{\top}\U\right)^{-1}\U^{\top}\R}\right]&=\tr{\EB\left[\U\left(\U^{\top}\U\right)^{-1}\U^{\top}\right]\R}\overset{\eqref{eq:EP}}{=}\frac{k}{d}\trcommon{\R}.
\end{align*}
Then we consider the greedy strategy such that $\U=\E_k(\R)$.
We use $\{r_{i}\}_{i=1}^{k}$ to denote $k$ largest diagonal entries of $\R$ with $ r_{1}\geq r_{2} \geq \cdots \geq r_{k}$, then we have
\begin{align}
\label{eq:bigeq}
    \sum_{i=1}^{k}r_{i}\geq\frac{k}{d}\trcommon{\R}.
\end{align}
We let $\vu_i\in\BR^d$ be the $i$-th column of $\mU\in\BR^{d\times k}$, then we have
\begin{align*}  
&\tr{\U\big(\U^{\top}\U\big)^{-1}\U^{\top}\R} 
=\tr{\big(\U^{\top}\U\big)^{-1}\U^{\top}\R\U}
=\tr{\I_k\U^{\top}\R\U} =\tr{\U^{\top}\R\U}\\
&= \sum_{i=1}^{k} \u_{i}^{\top}\R \u_{i} =\sum_{i=1}^{k}r_{i}\!\!\overset{\eqref{eq:bigeq}}{\geq} \frac{k}{d}\trcommon{\R}.
\end{align*}
\end{proof}
\vspace{-2em}
\begin{remark}
The proof of result (\ref{eq:random-up}) in Lemma~\ref{lm:explicitbound}  uses some statistical results on manifold \citep{chikuse2003statistics}. For readers who are not familiar with manifold theory,
we also provide an elementary proof for equation (\ref{eq:random-up}) in Appendix~\ref{appen:addiproof}.
\end{remark}
Now, we formally present the convergence result~\eqref{eq:srk-aim} for \srk~update.
\begin{theorem}\label{thm:matrix}
Let $ \G_{+}={\text{\rm SR-$k$}}(\G,\A,\U)$
with $\G,\mA\in\RB^{d\times d}$ such that $\mG\succeq\mA$ and select $\U\in\RB^{d\times k}$, where $k\leq d$, by the randomized strategy $[\mU]_{ij} \overset{{\rm i.i.d}}{\sim} \fN(0,1)$ or the greedy strategy~$\mU=\mE_k(\G-\A)$.
Then we have $\EB\left[\tau_{\A}(\G_{+})\right]\leq \left(1-k/d\right)\tau_\A(\G).  $
\end{theorem}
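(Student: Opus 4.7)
The plan is to assemble the decomposition~\eqref{eq:tauAG+} together with Lemmas~\ref{lm:pdneq2} and~\ref{lm:explicitbound}; these already carry the analytical weight, so the theorem follows by lining them up correctly. Setting $\R \triangleq \G - \A$, which is positive semi-definite since $\G \succeq \A$, the identity~\eqref{eq:tauAG+} reads
\[
\tau_{\A}(\G_+) = \tr{\R} - \tr{\R\U(\U^\top\R\U)^\dagger\U^\top\R} = \tau_{\A}(\G) - \tr{\R\U(\U^\top\R\U)^\dagger\U^\top\R}.
\]
The whole theorem therefore reduces to showing that the second trace is, in expectation, at least $(k/d)\tr{\R}$.

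Next I would invoke Lemma~\ref{lm:pdneq2} to replace the pseudoinverse expression with the cleaner projector form,
\[
\tr{\R\U(\U^\top\R\U)^\dagger\U^\top\R} \geq \tr{\U(\U^\top\U)^{-1}\U^\top\R}.
\]
This step requires $\U$ to have full column rank, which holds almost surely for the randomized strategy (a $d\times k$ matrix with i.i.d.\ $\mathcal{N}(0,1)$ entries and $k\leq d$ is full rank with probability one) and deterministically for the greedy strategy, since $\U = \E_k(\R)$ is a column submatrix of the identity.

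Finally, Lemma~\ref{lm:explicitbound} closes the argument in both cases. For the randomized strategy, part~(a) gives $\EB[\tr{\U(\U^\top\U)^{-1}\U^\top\R}] = (k/d)\tr{\R}$, so taking expectations in the chain of inequalities above yields $\EB[\tau_\A(\G_+)] \leq (1 - k/d)\tau_\A(\G)$. For the greedy strategy, part~(b) supplies the deterministic inequality $\tr{\U(\U^\top\U)^{-1}\U^\top\R} \geq (k/d)\tr{\R}$, and the same chain delivers the same bound without needing any expectation. I do not anticipate a real obstacle here, since the two lemmas have already done the substantive linear-algebraic and probabilistic work; the only subtlety worth being careful about is the full-rank requirement on $\U$ needed to apply Lemma~\ref{lm:pdneq2}, which is handled as noted above.
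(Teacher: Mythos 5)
Your proposal is correct and follows essentially the same route as the paper: the splitting in~\eqref{eq:tauAG+}, the lower bound from Lemma~\ref{lm:pdneq2} with $\R=\G-\A$, and then Lemma~\ref{lm:explicitbound}(a) or (b) to obtain the $k/d$ factor. Your explicit remark that $\U$ has full column rank (almost surely in the randomized case, trivially in the greedy case) is a small point the paper leaves implicit, but otherwise the arguments coincide.
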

\begin{proof}
Applying Lemma~\ref{lm:pdneq2} and Lemma~\ref{lm:explicitbound} by taking $\R=\G-\A$, we have
\begin{align*}
    \EBcommon{\tau_{\A}(\G_{+})} & ~~~\overset{\eqref{eq:tauAG+}}{=}~~~\tau_{\A}(\G)-\EBP{\tr{(\G-\A)\U(\U^{\top}\big(\G-\A)\U\big)^{\dag}\U^{\top}(\G-\A)}}\\
    & ~~~\overset{\eqref{eq:keyeq}}{\leq}~~~\tau_{\A}(\G) -\EBP{\tr{\U\left(\U^{\top}\U\right)^{-1}\U^{\top}(\G-\A)}}\\
    &~ \overset{\eqref{eq:random-up},\,\eqref{eq:greedy-up}}{\leq} \tau_{\A}(\G) -\frac{k}{d}\tr{(\G-\A)} 
    = \left(1-\frac{k}{d}\right)\tau_{\A}(\G).
\end{align*}
\end{proof}
Theorem \ref{thm:matrix} reveals the advantage of the \srk~update, i.e., we can achieve faster convergence with respect to $\tau_{\A}(\cdot)$ by increasing the block size $k$.
As a comparison, the results of ordinary randomized or greedy SR1 updates~\citep{lin2021greedy} are exactly the special case of Theorem~\ref{thm:matrix} when $k=1$.

\section{Minimizing Strongly Convex Function}\label{sec:srk-opt}
By leveraging the proposed \srk{ } updates, we introduce a novel block quasi-Newton method, referred to as the \srk{} method.
The specifics of the \srk~method are outlined in Algorithm~\ref{alg:SRK},
where $M>0$ is the self-concordant parameter that follows the notation in Assumption~\ref{ass:strongself}.

We shall consider the convergence rate of \srk~method (Algorithm~\ref{alg:SRK}) and show its superiority to existing quasi-Newton methods. 
Our convergence analysis is based on the measure of local gradient norm \citep{nesterov2018lectures,rodomanov2021greedy,lin2021greedy}, which is defined as 
\begin{align*}
\lambda(\x)\triangleq \sqrt{\nabla f(\x)^{\top}(\nabla^2f(\x))^{-1}\nabla f(\x)}.
\end{align*}
The analysis starts from the following result for quasi-Newton iterations.

\begin{lemma}[\!\!{\cite[Lemma 4.3]{rodomanov2021greedy}}]
\label{lm:linear-quadra}
Suppose that the twice differentiable objective $f:\BR^d\to\BR$ is strongly self-concordant with constant $M\geq0$ and the positive definite matrix $\G_t\in\BR^{d\times d}$ satisfies $ \nabla^2 f(\x_t)\preceq \G_t\preceq \eta_t \nabla^2 f(\x_t)$
for some $\eta_t\geq 1$ and $M\lambda(\x_t)\leq 2$.
Then the update formula 
\begin{align} \label{eq:iterbyG}
\x_{t+1}=\x_t-\G_t^{-1}\nabla f(\x_t)    
\end{align}
holds that 
\begin{align}
\label{eq:linear-quadra}
r_{t}\leq \lambda(\x_t)~~~\text{and}~~~  \lambda(\x_{t+1})\leq \left(1-\frac{1}{\eta_t}\right)\lambda(\x_t) + \frac{M(\lambda(\x_t))^2}{2}+\frac{M^2(\lambda (\x_t))^3}{4\eta_t},
\end{align}
where $r_t  = \|\x_{t+1}-\x_t\|_{\x_t}$.
\end{lemma}

Note that the value of $\eta_t$ in equation (\ref{eq:linear-quadra}) is crucial to describe the local convergence rates of different types of quasi-Newton methods.
Applying Lemma \ref{lm:linear-quadra} by setting $\eta_t = 3\eta_0/2$ and combining with Lemma~\ref{lm:sr1good}, we can establish the linear convergence rate of \srk~methods as follows (see Appendix~\ref{sec:srklinear} for the detailed proof).
 
\begin{theorem}\label{thm:srklinear}
Under Assumptions \ref{ass:smooth}, \ref{ass:strongconvex}, and \ref{ass:strongself}, if we run SR-k method (Algorithm~\ref{alg:SRK}) with $\vx_0\in\BR^d$, $\mG_0\in\BR^{d\times d}$ such that $\nabla^2 f(\x_0)\preceq \G_0\preceq \eta_0 \nabla^2f(\x_0)$ and  $M\lambda(\vx_0)\!\leq\! {\ln(3/2)}/(4\eta_0)$
for some $\eta_0\geq 1$. Then it holds that
\begin{align}
\label{eq:srklinear}
  \nabla^2 f(\x_{t})\preceq \G_t\preceq \frac{3\eta_0}{2}\nabla^2 f(\x_t)\qquad\text{and}\qquad\lambda(\x_t)\leq \left(1-\frac{1}{2\eta_0}\right)^t\lambda(\x_0).
\end{align}
\end{theorem}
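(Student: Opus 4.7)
I would proceed by induction on $t$, carrying two invariants in parallel: (i) $\nabla^2 f(\x_t)\preceq \G_t\preceq \eta_t\,\nabla^2 f(\x_t)$ for some $\eta_t\in[1,3\eta_0/2]$, and (ii) $\lambda(\x_t)\leq (1-1/(2\eta_0))^t \lambda(\x_0)$. The base case $t=0$ is immediate from the hypotheses, with $\eta_0$ itself serving as the bound in (i).

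For the inductive step, assume (i) and (ii) for every index up to $t$. To propagate (ii), I would apply Lemma \ref{lm:linear-quadra} using the current $\eta_t$; its precondition $M\lambda(\x_t)\leq 2$ is satisfied since $M\lambda(\x_t)\leq M\lambda(\x_0)\leq \ln(3/2)/(4\eta_0)$. The lemma yields
\begin{align*}
\lambda(\x_{t+1})\leq\left(1-\frac{1}{\eta_t}+\frac{M\lambda(\x_t)}{2}+\frac{M^2\lambda(\x_t)^2}{4\eta_t}\right)\lambda(\x_t),
\end{align*}
and because $1/\eta_t\geq 2/(3\eta_0)$, it suffices to verify
\begin{align*}
\frac{M\lambda(\x_t)}{2}+\frac{M^2\lambda(\x_t)^2}{4\eta_t}\leq \frac{2}{3\eta_0}-\frac{1}{2\eta_0}=\frac{1}{6\eta_0},
\end{align*}
which is a short numerical check using the upper bound on $M\lambda(\x_t)$ above.

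To propagate (i), I would use Assumption \ref{ass:strongself} twice (swapping the roles of $\x_t$ and $\x_{t+1}$), together with $r_t=\|\x_{t+1}-\x_t\|_{\x_t}\leq \lambda(\x_t)$ from the first half of Lemma \ref{lm:linear-quadra}, to derive
\begin{align*}
\frac{1}{1+Mr_t}\,\nabla^2 f(\x_t)\preceq \nabla^2 f(\x_{t+1})\preceq (1+Mr_t)\,\nabla^2 f(\x_t).
\end{align*}
Combined with (i) at $t$, this sandwiches $\tilde{\G}_t=(1+Mr_t)\G_t$ as $\nabla^2 f(\x_{t+1})\preceq \tilde{\G}_t\preceq (1+Mr_t)^2\eta_t\,\nabla^2 f(\x_{t+1})$. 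Lemma \ref{lm:sr1good} then guarantees that the SR-$k$ update preserves this sandwich, giving $\eta_{t+1}\leq (1+Mr_t)^2\eta_t$. Unrolling, applying $\ln(1+x)\leq x$, and summing the geometric series from (ii) yields
\begin{align*}
\ln\frac{\eta_{t+1}}{\eta_0}\leq \sum_{i=0}^{t}2Mr_i\leq 2M\lambda(\x_0)\sum_{i=0}^{\infty}\left(1-\frac{1}{2\eta_0}\right)^i=4\eta_0 M\lambda(\x_0)\leq \ln(3/2),
\end{align*}
so $\eta_{t+1}\leq (3/2)\eta_0$, closing both invariants at $t+1$.

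The main obstacle is the tight calibration between the three constants involved. The target rate $1-1/(2\eta_0)$ is deliberately chosen \emph{strictly faster} than the worst-case rate $1-2/(3\eta_0)$ implied by $\eta_t\leq 3\eta_0/2$, leaving a slack of exactly $1/(6\eta_0)$ to absorb the quadratic and cubic perturbations from Lemma \ref{lm:linear-quadra}; simultaneously, the threshold $\ln(3/2)/(4\eta_0)$ on $M\lambda(\x_0)$ is exactly what makes the infinite product $\prod_{i\geq 0}(1+Mr_i)^2$ stay below $3/2$. Because the bound on $\eta_t$ is required to drive the linear-rate recursion for $\lambda(\x_t)$, while the linear decay of $\lambda(\x_t)$ is required to control the growth of $\eta_t$, these two invariants cannot be proved in isolation and must be established together in a single joint induction.
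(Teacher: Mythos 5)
Your proposal is correct and follows essentially the same route as the paper: the paper derives the coupled recursions $\lambda(\x_{t+1})\leq(1-1/\tilde{\eta}_t)\lambda(\x_t)+\tfrac{M}{2}\lambda(\x_t)^2+\tfrac{M^2}{4\tilde{\eta}_t}\lambda(\x_t)^3$ and $\tilde{\eta}_{t+1}\leq(1+M\lambda(\x_t))^2\tilde{\eta}_t$ from Lemma~\ref{lm:linear-quadra}, the strongly-self-concordant sandwich on $\tilde{\G}_t$ (Lemma~\ref{lm:strong_self}), and Lemma~\ref{lm:sr1good}, and then closes them by exactly the joint induction you describe (packaged as Lemma~\ref{lm:linear}). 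The only differences are presentational: you re-derive the $(1+Mr_t)^2$ sandwich directly from Assumption~\ref{ass:strongself} instead of citing it, and you close the one-step contraction via the slack $2/(3\eta_0)-1/(2\eta_0)=1/(6\eta_0)$ rather than the paper's exact factorization, both of which check out numerically under the same threshold $M\lambda(\x_0)\leq\ln(3/2)/(4\eta_0)$.
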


\label{sec:algorithm}
\begin{algorithm}[t]
\caption{Symmetric Rank-$k$ Method}\label{alg:SRK}
\begin{algorithmic}[1]
\STATE \textbf{Input:} $\vx_0, \G_0$, and $k$ \\
\STATE \textbf{for} $t=0,1\dots$ \\
\STATE \quad $\x_{t+1}=\x_t-\G_t^{-1}\nabla f(\x_t)$ \\ 
\STATE \quad $r_t=\|\x_{t+1}-\x_{t}\|_{\x_t}$ \\
\STATE \quad  $\tilde{\G}_{t}=(1+Mr_t)\G_t$ \\
\STATE \quad Construct $\U_t\in\RB^{d\times k}$ by  \\
\quad\quad  (a) randomized strategy: $\left[\U_{t}\right]_{ij}\overset{\rm{i.i.d}}{\sim}{\fN(0,1)}$ \\
\quad\quad  (b) greedy strategy: $\U_t=\E_k(\tilde{\G}_t-\nabla^2 f(\x_{t+1}))$ \label{line:greedy} \\
\STATE \quad $\G_{t+1}= \srk(\tilde{\G}_t,\nabla^2f(\x_{t+1}),\U_t)$ \\
\STATE \textbf{end for}
\end{algorithmic}
\end{algorithm}\vskip-0.3cm

Furthermore, we can obtain the superlinear rate for iteration \eqref{eq:iterbyG} if there exists some sequence $\{\eta_t\}$ such that $\eta_t\geq 1$ for all $t\geq 1$ and $\lim_{t\to\infty}\eta_t=1$.
For example, the randomized and greedy SR1 methods~\citep{lin2021greedy} lead to some $\{\eta_t\}$ such that~$\eta_t\geq 1$ and 
$\EB[\eta_t-1]\leq \OM((1-1/d)^{t})$.
As the results shown in Theorem \ref{thm:matrix}, the proposed \srk~updates have superiority in matrix approximation. 
So we desire to construct some~$\{\eta_t\}$ for \srk~methods with the tighter bound
$\EB[\eta_t-1]\leq \OM((1-{k}/{d})^{t})$.

Based on the above intuition, we derive the faster local superlinear convergence rate for \srk{} methods in the following theorem (see Appendix~\ref{sec:srk_proof} for the detailed proof), which is explicitly sharper than the convergence rates of existing randomized and greedy quasi-Newton methods~\citep{lin2021greedy,rodomanov2021greedy}.
\begin{theorem}
\label{thm:srk}
Under Assumptions \ref{ass:smooth}, \ref{ass:strongconvex}, and \ref{ass:strongself}, we run SR-k method (Algorithm~\ref{alg:SRK}) with $\vx_0\in\BR^d$ and $\mG_0\in\BR^{d\times d}$ such that 
\begin{align}
\label{eq:initial}
   M \lambda(\x_0)\leq \frac{\ln 2}{2} \cdot \frac{d-k}{ \eta_0 d^2\varkappa}\qquad\text{and}\qquad\nabla^2 f(\x_0)\preceq\G_0\preceq \eta_0\nabla^2f(\x_0)
\end{align} 
for some $k<d$ and $\eta_0\geq 1$. 
Then it holds that
\begin{align}
\label{eq:E_lambda_srk}
     \BE\left[\frac{\lambda(\x_{t+1})}{\lambda(\x_t)}\right]\leq 2d\varkappa\eta_0\left(1-\frac{k}{d}\right)^{t},
\end{align}
which naturally indicates the following two-stage convergence behaviors:
\begin{enumerate}[label=(\alph*),topsep=0.05cm, itemsep=0.1cm, leftmargin=0.9cm]
\item For \srk~method with randomized strategy, we have
    \begin{align*}
    \lambda(\x_{t_0+t})\leq\left(1-\frac{k}{d+k}\right)^{t(t-1)/2}\cdot\left(\frac{1}{2}\right)^t\cdot\left(1-\frac{1}{2\eta_0}\right)^{t_0}\lambda(\x_0),
\end{align*}
with probability at least $1-\delta$ for some $\delta\in(0,1)$, where $t_0=\OM(d\ln(\varkappa d\eta_0/\delta)/k)$. 
\item For \srk~method with greedy strategy, we have
    \begin{align*}
    \lambda(\x_{t_0+t})\leq\left(1-\frac{k}{d}\right)^{t(t-1)/2}\cdot\left(\frac{1}{2}\right)^t\cdot\left(1-\frac{1}{2\eta_0}\right)^{t_0}\lambda(\x_0),
\end{align*}
where $t_0=\OM\left(d\ln(\eta_0d\varkappa )/k\right)$.
\end{enumerate}
\end{theorem}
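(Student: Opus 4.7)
The plan is to propagate a bound on the trace potential $\tau_t\triangleq \tr{\G_t-\nabla^2 f(\x_t)}$, translate it into a bound on the spectral ratio $\eta_t$ that controls the step quality in Lemma~\ref{lm:linear-quadra}, and then iterate Lemma~\ref{lm:linear-quadra} to obtain the per-step ratio bound~\eqref{eq:E_lambda_srk}; telescoping across a burn-in phase of length $t_0$ then delivers the two-stage superlinear rate.

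First I would establish the link $\eta_t-1\leq \tau_t/\mu$: since $\mu\mI_d\preceq\nabla^2 f(\x_t)$ and $\G_t\preceq\eta_t\nabla^2 f(\x_t)$, the spectral bound and trace bound are tied as $\eta_t-1\leq \|\G_t-\nabla^2 f(\x_t)\|/\mu\leq \tau_t/\mu$. Next, Assumption~\ref{ass:strongself} gives $(1-Mr_t)\nabla^2 f(\x_t)\preceq \nabla^2 f(\x_{t+1})\preceq (1+Mr_t)\nabla^2 f(\x_t)$, which combined with $\tilde\G_t=(1+Mr_t)\G_t$ yields $\nabla^2 f(\x_{t+1})\preceq\tilde\G_t$ and the deterministic bound
\begin{align*}
\tau_{\nabla^2 f(\x_{t+1})}(\tilde\G_t)\leq (1+Mr_t)\tau_t + 2Mr_t\tr{\nabla^2 f(\x_t)}.
\end{align*}
Applying Theorem~\ref{thm:matrix} conditionally on $\fF_t=\sigma(\U_0,\dots,\U_{t-1})$ to the SR-$k$ step that produces $\G_{t+1}$ yields the one-step recursion
\begin{align*}
\EB[\tau_{t+1}\mid \fF_t]\leq \left(1-\frac{k}{d}\right)\bigl((1+Mr_t)\tau_t + 2Mr_t\tr{\nabla^2 f(\x_t)}\bigr).
\end{align*}

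Theorem~\ref{thm:srklinear} already guarantees, under the initial condition~\eqref{eq:initial}, that $r_t\leq\lambda(\x_t)\leq(1-1/(2\eta_0))^t\lambda(\x_0)$ and $\G_t\preceq (3\eta_0/2)\nabla^2 f(\x_t)$; hence $\sum_{s\geq 0}Mr_s\leq 2\eta_0 M\lambda(\x_0)=\OM(1)$, and $\tr{\nabla^2 f(\x_s)}\leq dL$ is uniformly bounded. Iterating the recursion above then gives $\EB[\tau_t]\leq C\eta_0 dL(1-k/d)^t$ for a universal constant $C$, and therefore $\EB[\eta_t-1]\leq C\eta_0 d\varkappa(1-k/d)^t$. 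Plugging this into Lemma~\ref{lm:linear-quadra}, using $1-1/\eta_t\leq\eta_t-1$, and noting that the initial smallness of $M\lambda(\x_0)$ forces the quadratic tail of \eqref{eq:linear-quadra} to be dominated by the linear tail, I obtain
\begin{align*}
\EB\!\left[\frac{\lambda(\x_{t+1})}{\lambda(\x_t)}\right]\leq 2\,\EB[\eta_t-1]\leq 2d\varkappa\eta_0\left(1-\frac{k}{d}\right)^t,
\end{align*}
which is exactly \eqref{eq:E_lambda_srk}. Telescoping $\lambda(\x_{t_0+t})/\lambda(\x_{t_0})=\prod_{s=0}^{t-1}\lambda(\x_{t_0+s+1})/\lambda(\x_{t_0+s})$ yields a product of the form $(2d\varkappa\eta_0)^t(1-k/d)^{tt_0+t(t-1)/2}$; choosing $t_0=\OM(d\ln(d\varkappa\eta_0)/k)$ makes $2d\varkappa\eta_0(1-k/d)^{t_0}\leq 1/2$, producing the advertised $(1/2)^t(1-k/d)^{t(t-1)/2}$ factor, and composing with the linear phase from Theorem~\ref{thm:srklinear} supplies the $(1-1/(2\eta_0))^{t_0}$ prefactor. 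For the greedy variant every step is deterministic, so the telescoping applies almost surely; for the randomized variant I would apply Markov's inequality together with a union bound over the first $t_0+t$ iterations to convert the in-expectation bound into a high-probability statement, which accounts for the extra $\ln(1/\delta)$ term in $t_0$.

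The main obstacle will be the careful bookkeeping of the self-concordance noise $\sum_s Mr_s\tr{\nabla^2 f(\x_s)}$: it must remain $\OM(1)$ uniformly in $t$ so that the iterated recursion does not acquire a $t$-dependent additive term that pollutes the $(1-k/d)^t$ contraction. Relatedly, one must verify inductively that the hypothesis $\nabla^2 f(\x_t)\preceq\G_t\preceq (3\eta_0/2)\nabla^2 f(\x_t)$ of Theorem~\ref{thm:srklinear}, and the requirement $M\lambda(\x_t)\leq 2$ of Lemma~\ref{lm:linear-quadra}, remain valid throughout the superlinear phase; this is where the precise $(d-k)/(\eta_0 d^2\varkappa)$ factor in the initial condition~\eqref{eq:initial} is pinned down, via the need to absorb the quadratic tail of~\eqref{eq:linear-quadra} against the shrinking $(\eta_t-1)$ prefactor.
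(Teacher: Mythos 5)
Your reduction to the trace potential and the one-step recursion $\EB[\tau_{t+1}\mid\fF_t]\leq(1-k/d)\bigl((1+Mr_t)\tau_t+2Mr_t\tr{\nabla^2 f(\x_t)}\bigr)$ matches the paper's setup (Theorem~\ref{thm:matrix} combined with Lemma~\ref{lm:strong_self}), and your initial-phase ingredients from Theorem~\ref{thm:srklinear} are also the same. The gap is the next step: you claim that unrolling this recursion, using only $r_s\leq\lambda(\x_s)\leq(1-1/(2\eta_0))^s\lambda(\x_0)$ and $\tr{\nabla^2 f(\x_s)}\leq dL$, yields $\EB[\tau_t]\leq C\eta_0 dL(1-k/d)^t$. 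It does not: the unrolled bound is a convolution of two geometric sequences,
\begin{align*}
\EB[\tau_t]\;\lesssim\;\Bigl(1-\frac{k}{d}\Bigr)^{t}\tau_0\;+\;dLM\lambda(\x_0)\sum_{s=0}^{t-1}\Bigl(1-\frac{k}{d}\Bigr)^{t-s}\Bigl(1-\frac{1}{2\eta_0}\Bigr)^{s},
\end{align*}
and the sum decays only at the rate $\max\{1-k/d,\,1-1/(2\eta_0)\}^{t}$ (with an extra factor of $t$ when the rates coincide). Whenever $k/d>1/(2\eta_0)$ --- exactly the regime where the block update is supposed to pay off --- your claimed $(1-k/d)^t$ decay of $\EB[\tau_t]$, hence of $\EB[\eta_t-1]$, hence \eqref{eq:E_lambda_srk}, does not follow. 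The linear-phase bound on $\lambda_t$ is too crude here: one must show that the noise injected at step $s$ is itself superlinearly small, and your stated safeguard (keeping $\sum_s Mr_s\tr{\nabla^2 f(\x_s)}=\OM(1)$) is satisfied yet insufficient, because the problem is the \emph{rate} at which the additive term decays, not its total size.

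The paper closes exactly this gap with a coupled Lyapunov argument (Lemma~\ref{lm:superlinear}). It tracks $\theta_t=\delta_t+c\lambda_t$, where $\delta_t$ is the normalized trace potential and $c=2\varkappa dM$, and uses the two recursions \emph{jointly}: $\EB_t[\delta_{t+1}]\leq(1-k/d)(1+M\lambda_t)^2(\delta_t+c\lambda_t)$ together with $\lambda_{t+1}\leq(1+M\lambda_t)^2(\delta_t+\tfrac{M}{2}\lambda_t)\lambda_t\leq\theta_t\lambda_t e^{2M\lambda_t}$, so the $c\lambda_{t+1}$ contribution to $\theta_{t+1}$ is a small \emph{multiple} of $\theta_t$ rather than an exogenous additive term. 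This yields $\EB_t[\theta_{t+1}]\leq(1-k/d)e^{\tilde c\lambda_t}\theta_t$ with $\tilde c=2M+c\,\tfrac{d}{d-k}$, and since $\sum_t\lambda_t\leq 2\eta_0\lambda(\x_0)$, the accumulated factor $\prod_t e^{\tilde c\lambda_t}\leq 2$ under the initialization \eqref{eq:initial} (this is where the $(d-k)/(\eta_0 d^2\varkappa)$ scale is actually used). Hence $\EB[\theta_t]\leq 2\theta_0(1-k/d)^t$ with $\theta_0\leq\eta_0 d\varkappa$ by Lemma~\ref{lm:GHneq}, and \eqref{eq:E_lambda_srk} follows; your telescoping of the ratios past $t_0$ then proceeds as in the paper. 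A secondary discrepancy: for part (a) the paper does not take a union bound over a finite horizon but invokes Lemma~\ref{lm:randomsequence_lin}, which converts the in-expectation bound into one holding for all $t$ simultaneously with probability $1-\delta$, at the price of degrading $1-k/d$ to $1-k/(d+k)$ --- this is precisely where the rate in statement (a) comes from, and your sketch does not account for it.
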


\begin{remark}
The condition $\nabla^2 f(\x_0)\preceq\G_0\preceq \eta_0\nabla^2f(\x_0)$ in Theorem~\ref{thm:srklinear} and \ref{thm:srk} can be satisfied by simply setting $\mG_0=L\mI_d$, then we can efficiently implement the update on $\mG_t$ by Woodbury identity~\citep{woodbury1950inverting}.
In a follow-up work, \cite{liu2023block} analyzed block quasi-Newton methods for solving nonlinear equations. However, their convergence guarantees require the Jacobian estimator at the initial point to be sufficiently accurate, which leads to potentially expensive costs in the step of initialization. 
\end{remark}
 {
\begin{remark}
Theorem~\ref{thm:srk} also indicates the theoretical difference between the randomized and greedy strategies.
The greedy strategy enjoys a deterministic convergence guarantee with respect to the local gradient norm~$\lambda(\x_t)$, while the randomized strategy achieves the corresponding guarantee with high probability and introduces an additional logarithmic dependence on the failure probability~$\delta$.
However, the greedy strategy requires extra computation to identify the top-$k$ diagonal entries of the matrix $\tilde{\G}_t-\nabla^2 f(\x_{t+1})$ at each iteration, e.g., line \ref{line:greedy}(b) of Algorithm \ref{alg:SRK}.
In contrast, the randomized strategy is easier to implement since it only needs construct $\mU_t$ by following the normal distribution, which can make it preferable in large-scale applications.
\end{remark}
}
We can also set $k=d$ for \srk~methods, which leads to $\eta_t=1$ almost surely for all $t\geq 1$ and achieves the quadratic convergence rate like standard Newton methods. 
\begin{corollary}
\label{cor:recoverNewton}
Under Assumptions~\ref{ass:smooth}, \ref{ass:strongconvex}, and \ref{ass:strongself}, we run the SR-$k$ method (Algorithm~\ref{alg:SRK}) with $k=d$ and $\vx_0\in\BR^d$ such that~$M\lambda(\x_0)\leq 2$,
then $\lambda(\x_{t+1})\leq M(\lambda(\x_t))^2$ is held almost surely 
for all  $t\geq 1$. 
\end{corollary}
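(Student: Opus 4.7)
The plan is to show that when $k=d$ the SR-$d$ update forces $\G_{t+1}$ to equal $\nabla^2 f(\x_{t+1})$ exactly for every $t\geq 0$, so that Algorithm~\ref{alg:SRK} collapses to ordinary Newton steps from iteration $1$ onwards, and then to read off the quadratic rate directly from Lemma~\ref{lm:linear-quadra} with $\eta_t = 1$.

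First I would check the input requirement of the update: by an easy induction using Lemma~\ref{lm:sr1good} together with Assumption~\ref{ass:strongself}, one has $\nabla^2 f(\x_t)\preceq \G_t$, and then $\nabla^2 f(\x_{t+1})\preceq(1+Mr_t)\nabla^2 f(\x_t)\preceq\tilde\G_t$, so that $\srk(\tilde\G_t,\nabla^2 f(\x_{t+1}),\U_t)$ is well-defined. Next, note that in Algorithm~\ref{alg:SRK} with $k=d$, the matrix $\U_t\in\RB^{d\times d}$ is invertible almost surely under the randomized strategy and equals $\mI_d$ under the greedy strategy, since $\mE_d(\cdot)$ is just a permutation of the standard basis.

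The main technical step is a clean linear-algebra identity: for any positive semidefinite $\B\in\RB^{d\times d}$ and any invertible $\U\in\RB^{d\times d}$,
\begin{align*}
\B\U(\U^\top\B\U)^{\dag}\U^\top\B = \B.
\end{align*}
I would prove this by setting $\mC = \B^{1/2}\U$, observing that $\U(\U^\top\B\U)^{\dag}\U^\top = \U(\mC^\top\mC)^{\dag}\U^\top$, so the left-hand side equals $\B^{1/2}\mC(\mC^\top\mC)^{\dag}\mC^\top\B^{1/2} = \B^{1/2}P_{\mathrm{range}(\mC)}\B^{1/2}$. Since $\U$ is invertible, $\mathrm{range}(\mC)=\mathrm{range}(\B^{1/2})$, so the projector acts as the identity on $\B^{1/2}$, yielding $\B^{1/2}\B^{1/2}=\B$. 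Applying this with $\B = \tilde\G_t - \nabla^2 f(\x_{t+1})\succeq\0$ inside the definition~\eqref{eq:srk} gives $\G_{t+1}=\nabla^2 f(\x_{t+1})$ almost surely for all $t\geq 0$.

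Therefore, for every $t\geq 1$ the estimator satisfies $\nabla^2 f(\x_t)\preceq \G_t\preceq \eta_t\nabla^2 f(\x_t)$ with $\eta_t=1$, and Lemma~\ref{lm:linear-quadra} yields
\begin{align*}
\lambda(\x_{t+1})\leq \frac{M(\lambda(\x_t))^2}{2} + \frac{M^2(\lambda(\x_t))^3}{4}.
\end{align*}
Using the assumption $M\lambda(\x_t)\leq 2$, the cubic term is bounded by $M(\lambda(\x_t))^2/2$, so $\lambda(\x_{t+1})\leq M(\lambda(\x_t))^2$, which is the desired quadratic rate. The hardest ingredient is the projection identity above; once that is in hand the rest of the argument is a short combination of Lemma~\ref{lm:sr1good}, strong self-concordance, and Lemma~\ref{lm:linear-quadra}.
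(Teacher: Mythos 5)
Your proposal is correct and follows essentially the same route as the paper: with $k=d$ the matrix $\U_t$ is invertible almost surely (a permutation matrix in the greedy case), the SR-$d$ update therefore returns $\G_{t+1}=\nabla^2 f(\x_{t+1})$ exactly, and Lemma~\ref{lm:linear-quadra} with $\eta_t=1$ together with $M\lambda(\x_t)\leq 2$ gives the quadratic rate. The only difference is cosmetic: the paper sandwiches the update with $\U_t^\top(\cdot)\,\U_t$ and uses $\mM\mM^{\dag}\mM=\mM$, which requires no positive semidefiniteness of $\tilde{\G}_t-\nabla^2 f(\x_{t+1})$, so your preliminary induction establishing $\nabla^2 f(\x_t)\preceq\G_t$ (whose base case $\nabla^2 f(\x_0)\preceq\G_0$ is not actually stated in the corollary) can be dispensed with.
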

\begin{proof}
    The update rule  $\G_{t}=\srk(\tilde{\G}_{t-1},\nabla^2 f(\x_{t}),\U_{t-1})$ implies the equality holds with 
    $\U_{t-1}^{\top}\G_{t}\U_{t-1} = \U_{t-1}^{\top}\nabla^2 f(\x_{t})\U_{t-1} $.
    Our choice of $\U_t\in\RB^{d\times d}$ guarantees that it is non-singular almost surely, so we have $  \G_{t}=\nabla^2 f(\x_{t})$
    for all $t\geq 1$. 
    Lemma~\ref{lm:linear-quadra} with~$\eta_{t}=1$ implies that 
    $ \lambda(\x_{t+1})\leq {M(\lambda(\x_t))^2}/{2} + {M^2(\lambda(\x_t))^3}/{2} \leq M(\lambda(\x_t))^2$
    %
    holds for all $t\geq 1$ almost surely, which finishes the proof.
\end{proof}

\section{Improved Results for Block BFGS and DFP Methods}\label{sec:Block BFGS}
Following our our investigation on \srk~methods, 
we can also achieve the non-asymptotic superlinear convergence rates of randomized block BFGS and randomized block DFP methods \citep{gower2016stochastic,gower2017randomized}.
The block BFGS~\citep{schnabel1983quasi,gower2017randomized,gower2016stochastic} and DFP~\citep{gower2017randomized} updates are defined as follows.

\begin{definition}
Let $\A\in\RB^{d\times d}$ and $\G\in \RB^{d\times d}$ be two positive-definite symmetric matrices with $\A\preceq \G$. For any full rank matrix $\U\in\RB^{d\times k}$ with $k\leq d$, we define 
\begin{align}\label{update:RaBFGS}
   {\text{\rm BlockBFGS}}(\G,\A,\U) 
   \triangleq \G\!-\G\U\big(\U^{\top}\G\U\big)^{-1}\U^{\top}\G+\A\U\big(\U^{\top}\A\U\big)^{-1}\U^{\top}\A,
\end{align}
and
\begin{align}
\label{eq:dfp-update}
\begin{split}
    &{\text{\rm BlockDFP}}(\G,\A,\U)\\
    &\triangleq \A\U\big(\U^{\top}\A\U\big)^{-1}\U^{\top}\A+\big(\I_d-\A\U\big(\U^{\top}\A\U\big)^{-1}\U^{\top}\big)\G\big(\I_d-\U\big(\U^{\top}\A\U\big)^{-1}\U^{\top}\A\big).
  \end{split}
\end{align}
\end{definition}
In previous work, \cite{gower2016stochastic} and \cite{kovalev2020fast} proposed a randomized block BFGS method 
 by constructing the Hessian estimator with formula (\ref{update:RaBFGS}) and showed it has an asymptotic local superlinear convergence rate. 
On the other hand, \cite{gower2017randomized} studied the randomized block DFP update (\ref{eq:dfp-update}) for matrix approximation, but they did not consider solving optimization problems.

To achieve the explicit superlinear convergence rate of block BFGS and block DFP methods, we provide some properties of block BFGS and block DFP updates which are similar to our observations on \srk~update.
We first show that block BFGS and DFP updates also have the non-increasing deviation from the target matrix.
\begin{lemma}
\label{lm:bfgsnofar}
Given any positive-definite matrices $\mA\in\BR^{d\times d}$ and $\mG\in\BR^{d\times d}$ which satisfy $\A\preceq\G\preceq\eta \A$
for some~$\eta\geq 1$, both of the updates $\G_{+}={ \text{\rm BlockBFGS}}(\G,\A,\U)$ and $\G_{+}={ \text{\rm BlockDFP}}(\G,\A,\U)$ for full rank matrix~$\U\in\RB^{d\times k}$ hold that $ \A\preceq\G_{+}\preceq \eta\A. $
\end{lemma}
\begin{proof}
We first consider the block BFGS update $\G_{+}={ \text{\rm BlockBFGS}}(\G,\A,\U)$. 
According to the Woodbury identity~\citep{woodbury1950inverting}, we have
\begin{align}\label{update:invRaBFGS}
\begin{split}    
\!\!\G_{+}^{-1} = \U\big(\U^{\top}\A\U\big)^{-1}\U^{\top} \!+\! \big(\I_d-\U\big(\U^{\top}\A\U\big)^{-1}\U^{\top}\A\big)\G^{-1}\big(\I_d-\A\U\big(\U^{\top}\A\U\big)^{-1}\U^{\top}\big).
\end{split}
\end{align}
The condition $\A\preceq\G\preceq\eta \A$ means $\eta^{-1}\A^{-1}\preceq\G^{-1}\preceq \A^{-1}$,
which implies
\begin{align*}
    \G_{+}^{-1}\!\overset{\eqref{update:invRaBFGS}}{\preceq} & \U\big(\U^{\top}\A\U\big)^{-1}\U^{\top} \! +\! \big(\I_d-\!\U\big(\U^{\top}\A\U\big)^{-1}\U^{\top}\A\big)\A^{-1}\big(\I_d-\A\U\big(\U^{\top}\A\U\big)^{-1}\U^{\top}\big) \!=\! \A^{-1}
\end{align*}
and
\begin{align*}
\G_{+}^{-1}\!\!
&\overset{\eqref{update:invRaBFGS}}{\succeq}\!\!\U\big(\U^{\top}\!\A\U\big)^{-1}\U^{\top}\!\!+\!\eta^{-1}\!\big(\I_d\!-\!\U\big(\U^{\top}\!\A\U\big)^{-1}\U^{\top}\!\A\big)\!\A\!^{-1}\!\big(\I_d\!-\!\A\U\big(\U^{\top}\!\A\U\big)^{-1}\U^{\top}\big)\\
    &~=\eta^{-1}\A ^{-1}+(1-\eta^{-1})\U\big(\U^{\top}\A\U\big)^{-1}\U^{\top}\succeq \eta^{-1}\A^{-1}.
\end{align*}
Thus we have $ {\eta}^{-1}\A^{-1}\preceq\G^{-1}_{+}\preceq \A^{-1}$, which finishes the proof for the block BFGS update.
We then consider the DFP update $\G_{+}={\text{\rm BlockDFP}}(\G,\A,\U)$.
The condition $\A\preceq\G\preceq\eta \A$ means
\begin{align*}
\G_{+}\!\!\!\overset{\eqref{eq:dfp-update}}{\succeq}
\A\U\big(\U^{\top}\!\A\U\big)^{-1}\U^{\top}\A +\!\big(\I_d\!-\!\A\U\big(\U^{\top}\!\A\U\big)\!^{-1}\U^{\top}\big)\A\big(\I_d\!-\!\U\big(\U^{\top}\!\A\U\big)^{-1}\U^{\top}\!\A\big)\!=\!\A
\end{align*}
and
\begin{align*}
 \G_{+}&\overset{\eqref{eq:dfp-update}}{\preceq}
\A\U\big(\U^{\top}\A\U\big)^{-1}\U^{\top}\A +\eta\big(\I_d-\A\U\big(\U^{\top}\A\U\big)^{-1}\U^{\top}\big)\A\big(\I_d-\U\big(\U^{\top}\A\U\big)^{-1}\U^{\top}\A\big)\\
&\,\,\,= \eta\A +(1-\eta)\A\U\big(\U^{\top}\A\U\big)^{-1}\U^{\top}\A \preceq \eta\A,
\end{align*}
where the last inequality is due to the facts $\eta\geq 1$ and $\A\U\big(\U^{\top}\A\U\big)^{-1}\U^{\top}\A\succeq\0$, which finishes the proof for the block DFP update.
\end{proof}

Then we introduce the following quantity~\citep{rodomanov2021greedy}
\begin{align}
\label{eq:measurebfgs}
    \sigma_{\A}(\G)\triangleq\tr{\A^{-1}(\G-\A)}
\end{align}
to measure the difference between the target matrix $\mA\in\BR^{d\times d}$ and the current estimator~$\mG\in\BR^{d\times d}$. 
In the following theorem, we show that randomized block BFGS and DFP updates converge to the target matrix with a faster rate than the ordinary randomized BFGS and DFP updates~\citep{rodomanov2021greedy,lin2021greedy}.
\begin{theorem}\label{thm:bfgs}
Consider the randomized block update
\begin{align}\label{eq:bfgsupdate}
    \G_{+}={\text{\rm BlockBFGS}}(\G,\A,\U)\qquad\text{or}\qquad\G_{+}={\text{\rm BlockDFP}}(\G,\A,\U),
\end{align}
where $\G,\A\in\RB^{d\times d}$ and $\G\succeq\A$. If
$\U\in\BR^{d\times k}$ is chosen as~$[\mU]_{ij} \overset{{\rm i.i.d}}{\sim} \fN(0,1)$ and it satisfies that $\mu\I_d\preceq\A\preceq L\I_d$, then we have
\begin{align}
\label{eq:bfgssigma}
  \EB\left[\sigma_{\A}(\G_{+})\right]\leq \left(1-\frac{k}{d\varkappa}\right)\sigma_\A(\G).
\end{align}
\end{theorem}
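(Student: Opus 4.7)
The plan is to derive, for each update, a closed-form identity for $\sigma_\A(\G_+)-\sigma_\A(\G)$, reduce both cases to a common expression involving the orthogonal projection onto $\spn(\A^{1/2}\U)$, and then lower-bound that projection in the Loewner order using the matrix identity \eqref{eq:EP} from the proof of Lemma~\ref{lm:explicitbound}.

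First, I would take the trace of $\A^{-1}\G_+$ directly from each update formula. For the block BFGS update \eqref{update:RaBFGS}, using $\tr{\U(\U^\top\A\U)^{-1}\U^\top\A}=k$ together with the cyclic property of the trace gives
\begin{align*}
\sigma_\A(\G_+)=\sigma_\A(\G)+k-\tr{(\U^\top\G\U)^{-1}\U^\top\G\A^{-1}\G\U}.
\end{align*}
For the block DFP update \eqref{eq:dfp-update}, setting $\Pi_\A=\A\U(\U^\top\A\U)^{-1}\U^\top$ and expanding the sandwich $(\I_d-\Pi_\A^\top)\A^{-1}(\I_d-\Pi_\A)$ collapses, via $\U^\top\A\U(\U^\top\A\U)^{-1}=\I_k$, to $\A^{-1}-\U(\U^\top\A\U)^{-1}\U^\top$, and hence
\begin{align*}
\sigma_\A(\G_+)=\sigma_\A(\G)+k-\tr{(\U^\top\A\U)^{-1}\U^\top\G\U}.
\end{align*}
So in both cases it suffices to show the subtracted trace is at least $k+\tfrac{k}{d\varkappa}\sigma_\A(\G)$ in expectation.

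Next, I would pass to the normalized variables $\tilde{\G}=\A^{-1/2}\G\A^{-1/2}$ and $\tilde{\U}=\A^{1/2}\U$, so that $\tilde{\G}\succeq\I_d$ and $\sigma_\A(\G)=\tr{\tilde{\G}-\I_d}$. Letting $\P_{\tilde{\U}}=\tilde{\U}(\tilde{\U}^\top\tilde{\U})^{-1}\tilde{\U}^\top$ be the orthogonal projection onto $\spn(\tilde{\U})$, a direct simplification shows the DFP trace equals $\tr{\P_{\tilde{\U}}\tilde{\G}}$ exactly, while the BFGS trace equals $\tr{\tilde{\G}\tilde{\U}(\tilde{\U}^\top\tilde{\G}\tilde{\U})^{-1}\tilde{\U}^\top\tilde{\G}}$, which by Lemma~\ref{lm:pdneq2} applied with $\R=\tilde{\G}$ is bounded below by the same $\tr{\P_{\tilde{\U}}\tilde{\G}}$. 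The theorem therefore reduces, for both updates, to establishing
\begin{align*}
\EB[\tr{\P_{\tilde{\U}}\tilde{\G}}]\geq k+\tfrac{k}{d\varkappa}\sigma_\A(\G).
\end{align*}

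The core remaining step is the Loewner bound $\EB[\P_{\tilde{\U}}]\succeq\tfrac{k}{d\varkappa}\I_d$. From $\A\preceq L\I_d$, the inequality $\U^\top\A\U\preceq L\U^\top\U$ gives $(\U^\top\A\U)^{-1}\succeq L^{-1}(\U^\top\U)^{-1}$ and hence $\P_{\tilde{\U}}\succeq L^{-1}\A^{1/2}\U(\U^\top\U)^{-1}\U^\top\A^{1/2}$ pointwise. Taking expectations, invoking \eqref{eq:EP} to get $\EB[\U(\U^\top\U)^{-1}\U^\top]=(k/d)\I_d$, and then using $\A\succeq\mu\I_d$ delivers the bound. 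To finish, I would split $\tr{\P_{\tilde{\U}}\tilde{\G}}=\tr{\P_{\tilde{\U}}(\tilde{\G}-\I_d)}+k$, use $\tilde{\G}-\I_d\succeq 0$ together with the elementary fact that $\tr{XM}\geq c\tr{X}$ whenever $X\succeq 0$ and $M\succeq cI_d$, and substitute into the identity from the first step. The main obstacle is the BFGS case: unlike DFP, its subtracted trace does not simplify to $\tr{\P_{\tilde{\U}}\tilde{\G}}$ directly, and Lemma~\ref{lm:pdneq2} is the crucial tool that absorbs the extra $\tilde{\G}$-weighting into a clean projection bound---without this reduction, one would have to estimate an expected projection onto $\spn(\G^{1/2}\U)$, for which no useful bound is available since the hypotheses impose no upper bound on $\|\G\|$.
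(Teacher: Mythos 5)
Your proof is correct, and while it rests on the same essential ingredients as the paper's argument, it organizes them differently enough to be worth comparing. The paper stays in the unnormalized variables: it first proves the per-realization inequality \eqref{eq:trineq2}, showing the common decrement $\tr{(\U^{\top}\A\U)^{-1}\U^{\top}\G\U}-\tr{\A\U(\U^{\top}\A\U)^{-1}\U^{\top}}$ is at least $\varkappa^{-1}\tr{\A^{-1}(\G-\A)\U(\U^{\top}\U)^{-1}\U^{\top}}$, handles BFGS via the positivity argument \eqref{eq:trineq} (which says the BFGS decrement dominates the DFP one --- exactly your Lemma~\ref{lm:pdneq2} step once you pass to $\tilde{\G}=\A^{-1/2}\G\A^{-1/2}$ and $\tilde{\U}=\A^{1/2}\U$), and only then takes expectations by applying Lemma~\ref{lm:explicitbound}(a) to the (nonsymmetric) matrix $\A^{-1}(\G-\A)$. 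You instead normalize by $\A^{1/2}$ at the outset, record exact identities $\sigma_{\A}(\G_{+})=\sigma_{\A}(\G)+k-(\cdot)$ for both updates, reuse Lemma~\ref{lm:pdneq2} with $\R=\tilde{\G}$ rather than re-deriving \eqref{eq:trineq}, and replace \eqref{eq:trineq2} by the Loewner bound $\EB[\P_{\tilde{\U}}]\succeq \frac{k}{d\varkappa}\I_d$, obtained from $(\U^{\top}\A\U)^{-1}\succeq L^{-1}(\U^{\top}\U)^{-1}$, the identity \eqref{eq:EP}, and $\A\succeq\mu\I_d$. What your route buys: the $1/\varkappa$ loss is isolated in a single transparent expectation bound on a genuine orthogonal projection, the final trace step only involves the symmetric PSD matrix $\tilde{\G}-\I_d$ (so you need nothing beyond $\tr{XM}\geq c\tr{X}$ for $X\succeq 0$, $M\succeq c\I_d$), and the SR-$k$ machinery is reused verbatim; the paper's version keeps everything in the original variables, which is what its subsequent analysis of Algorithm~\ref{alg:bfgs} quotes directly. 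Two points to state explicitly in a polished write-up: $\U$, and hence $\tilde{\U}=\A^{1/2}\U$, is full rank almost surely, so the pseudoinverse in Lemma~\ref{lm:pdneq2} is a true inverse and $\U^{\top}\U$, $\U^{\top}\A\U$, $\U^{\top}\G\U$ are invertible; and $\tr{\P_{\tilde{\U}}}=k$ because $\P_{\tilde{\U}}$ is a rank-$k$ orthogonal projection.
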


\begin{proof}
The condition $\mu\I_d\preceq\A\preceq L\I_d$ means we have
\begin{align}
    \label{eq:Aneq}   \big(\U^{\top}\A\U\big)^{-1} \succeq \frac{1}{L}\cdot\big(\U^{\top}\U\big)^{-1} \qquad\text{and}\qquad \frac{1}{\mu}\cdot\I_d-\A^{-1}\succeq\0,
\end{align}
which leads to the following inequality
\begin{align}
\label{eq:trineq2}
\begin{split}           
    &{\rm tr}\big(\!\big(\U^{\top}\A\U\big)^{-1}\U^{\top}\G\U\big)-{\rm tr}\big(\A\U\big(\U^{\top}\A\U\big)^{-1}\U^{\top}\big)\\
    &={\rm tr}\big(\!\big(\U^{\top}\A\U\big)^{-1}\U^{\top}(\G-\A)\U\big)
    \overset{\eqref{eq:Aneq}}{\geq} \frac{1}{L}\tr{\big(\U^{\top}\U\big)^{-1}\U^{\top}(\G-\A)\U}
  \\
  &=\frac{1}{L}\tr{{(\G-\A)^{1/2}\U\big(\U^{\top}\U\big)^{-1}\U^{\top}(\G-\A)^{1/2}}}\\
  &\!\overset{\eqref{eq:Aneq}}{\geq} \frac{\mu}{L} \tr{\A^{-1}(\G-\A)^{1/2}\U\big(\U^{\top}\U\big)^{-1}\U^{\top}(\G-\A)^{1/2}}.
\end{split}
 \end{align}
For the block BFGS update, we set $\P=\A^{1/2}\U$. Then it holds
\begin{align}\label{eq:bfgs-proof-psd}
\begin{split}
  &\U^{\top}\G{\big(\A^{-1}-\U\big(\U^{\top}\A\U\big)^{-1}\U^{\top}\big)}\G\U\\
  &= \U^{\top}\G\A^{-1/2}\big(\I_d -\A^{1/2}\U\big(\U^{\top}\A\U\big)^{-1}\U^{\top}\A^{1/2}\big)\A^{-1/2}\G\U\\
    &= \U^{\top}\G\A^{-1/2}\big(\I_d-\P\big(\P^{\top}\P\big)^{-1}\P^{\top}\big)\A^{-1/2}\G\U
    \succeq \0,
\end{split}
\end{align}
which implies 
\begin{align}
\label{eq:trineq}
\begin{split}               
& \tr{\U\G\big(\U^{\top}\G\U\big)^{-1}\U^{\top}\G\A^{-1}}-\tr{\big(\U^{\top}\A\U\big)^{-1}\U^{\top}\G\U} \\
&=\tr{\big(\U^{\top}\G\U\big)^{-1}\U^{\top}\G\A^{-1}\G\U}-\tr{\big(\U^{\top}\A\U\big)^{-1}\U^{\top}\G\U}\\    &=\tr{\big(\U^{\top}\G\U\big)^{-1}\big(\U^{\top}\G\A^{-1}\G\U - \U^{\top}\G\U\big(\U^{\top}\A\U\big)^{-1}\U^{\top}\G\U\big)}\\    &=\text{tr}\big(\big(\U^{\top}\G\U\big)\!^{-1/2}\U^{\top}\G{\big(\A^{-1}-\U\big(\U^{\top}\A\U\big)^{-1}\U^{\top}\big)}\G\U\big(\U^{\top}\G\U\big)\!^{-1/2}\big)\overset{\eqref{eq:bfgs-proof-psd}}{\geq}\! 0.
\end{split}
\end{align}
The equation \eqref{update:RaBFGS} implies
\begin{align*}
&\tr{(\G_{+}-\A)\A^{-1}} \\ &\overset{\eqref{update:RaBFGS}}{=}\tr{(\G-\A)\A^{-1}}-\tr{\G\U\big(\U^{\top}\G\U\big)^{-1}\U^{\top}\G\A^{-1}}+\tr{\A\U\big(\U^{\top}\A\U\big)^{-1}\U^{\top}}\\
 &\overset{\eqref{eq:trineq}}{\leq} \tr{(\G-\A)\A^{-1}}- \big(\tr{\big(\U^{\top}\A\U\big)^{-1}\U^{\top}\G\U}-\tr{\A\U\big(\U^{\top}\A\U\big)^{-1}\U^{\top}}\big).
\end{align*}
Taking expectation on both sides of the above result, we obtain 
\begin{align}
\label{eq:sigmaupdate}
\begin{split}
& \EBcommon{\sigma_{\A}(\G_{+})}
 \overset{\eqref{eq:measurebfgs}}{\leq} \sigma_{\A}(\G)-\EBP{\tr{\big(\U^{\top}\A\U\big)^{-1}\U^{\top}\G\U-\A\U\big(\U^{\top}\A\U\big)^{-1}\U^{\top}}} \\
   &\overset{\eqref{eq:trineq2}}{\leq}\sigma_{\A}(\G) - \BE\left[\frac{\mu}{L}\tr{\A^{-1}(\G-\A)^{1/2}\U\big(\U^{\top}\U\big)^{-1}\U^{\top}(\G-\A)^{1/2}}\right]\\
   &~ = \sigma_{\A}(\G) - \frac{\mu}{L}\EBP{\tr{(\G-\A)^{1/2}\A^{-1}(\G-\A)^{1/2}\U(\U^{\top}\U)^{-1}\U^{\top}}}\\
   &~ = \sigma_{\A}(\G)- \frac{k}{d\varkappa} \tr{(\G-\A)^{1/2}\A^{-1}(\G-\A)^{1/2}} = \left(1-\frac{k}{d\varkappa}\right)\sigma_\A(\G),
\end{split}
\end{align}
where the last line is by applying  Lemma~\ref{lm:explicitbound}(a) with $\R=(\G-\A)^{1/2}\A^{-1}(\G-\A)^{1/2}$. 

For the block DFP update, the equation \eqref{eq:dfp-update} indicates 
\begin{align*}
\begin{split}
        & \tr{ (\G_{+}-\A)\A^{-1}}\\
   &\overset{\eqref{eq:dfp-update}}{=} \tr{\A\U(\U^{\top}\A\U)^{-1}\U^{\top}}\!+\tr{(\G-\A)\A^{-1}}\!+\tr{\A\U(\U^{\top}\A\U)^{-1}\U^{\top}\G\U(\U^{\top}\A\U)^{-1}\U^{\top}} \\
   &~~~~~~ -\tr{\A\U(\U^{\top}\A\U)^{-1}\U^{\top}\G\mA^{-1}}-\tr{\G\U(\U^{\top}\A\U)^{-1}\U^{\top}}\\
   &~=~\tr{(\G-\A)\A^{-1}} - \big(\tr{(\U^{\top}\A\U)^{-1}(\U^{\top}\G\U)}-\tr{\A\U\big(\U^{\top}\A\U\big)^{-1}\U^{\top}}\big).
\end{split}   
\end{align*}
Taking the expectation on both sides of the above equation, then we can follow the analysis of the block BFGS update from the step of~\eqref{eq:sigmaupdate} to prove the desired result for the block DFP update. Hence, we finish the proof.
\end{proof}
\begin{remark}   
Notice that \cite{gower2017randomized} also established convergence rates for randomized block BFGS and DFP updates, but their results do not reveal the relationship between the convergence rates and the block size $k$.
\end{remark}

\begin{algorithm}[ht]
\caption{Randomized Block BFGS/DFP}\label{alg:bfgs}
\begin{algorithmic}[1]
\STATE \textbf{Input:} $\x_0$, $\G_0$, and $k$ \\
\STATE \textbf{for} $t=0,1\dots$\\
\STATE \quad $\x_{t+1}=\x_t-\G_t^{-1}\nabla f(\x_t)$ \\
\STATE \quad $r_t=\|\x_{t+1}-\x_{t}\|_{\x_t}$ \\[0.08cm]
\STATE \quad $\tilde{\G}_{t}=(1+Mr_t)\G_t$ \label{line:correction} \\
\STATE \quad Construct $\U_t$ by $\left[\U_{t}\right]_{ij}\overset{\rm{i.i.d}}{\sim} {\fN(0,1)}$ \\[0.03cm]
\STATE \quad Update $\G_t$ by  \\[0.05cm]
\quad \quad  (a) $\G_{t+1}=\bfgs(\tilde{\G}_t,\nabla^2f(\x_{t+1}),\U_t)$\\[0.05cm]
\quad \quad (b) $\G_{t+1}= \dfp(\tilde{\G}_t,\nabla^2f(\x_{t+1}),\U_t)$ \\
\STATE \textbf{end for} 
\end{algorithmic}
\end{algorithm}

We propose randomized block BFGS and DFP methods in Algorithm~\ref{alg:bfgs}.
Using the results of Theorem~\ref{thm:bfgs}, we establish the explicit superlinear convergence rate for our methods as follows  (see Appendix \ref{sec:bfgs_proof} for the detailed proof).

\begin{theorem}
\label{thm:BFGS}
Under Assumption~\ref{ass:smooth}, \ref{ass:strongconvex}, and \ref{ass:strongself}, we run randomized block BFGS/DFP method (Algorithm~\ref{alg:bfgs}) with $\vx_0\in\BR^d$ and $\mG_0\in\BR^{d\times d}$ such that
\begin{align}
\label{eq:bfgsini}
 M \lambda(\x_0)\leq \frac{\ln 2}{4}\cdot \frac{d\varkappa - k}{\eta_0 d^2\varkappa} 
\qquad\text{and}\qquad
\nabla^2 f(\x_0)\preceq\G_0\preceq \eta_0\nabla^2f(\x_0)
\end{align}
for some $k<d$ and $\eta_0\geq 1$. 
Then we have 
\begin{align*}
    \BE\left[\frac{\lambda(\x_{t+1})}{\lambda(\x_t)}\right]\leq 2d\eta_0\left(1-\frac{k}{d\varkappa}\right)^{t}.
\end{align*}
\end{theorem}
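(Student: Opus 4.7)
The plan is to mirror the two-phase argument used for Theorem \ref{thm:srk}, but replace the measure $\tau_{\A}(\G)=\tr{\G-\A}$ (which is natural for \srk{}) with the affine-invariant measure $\sigma_{\A}(\G)=\tr{\A^{-1}(\G-\A)}$, which is the quantity controlled by block BFGS/DFP as proved in Theorem \ref{thm:bfgs}. The one-step decay factor in Theorem \ref{thm:bfgs} is $(1-k/(d\varkappa))$, which is exactly the factor appearing in the claimed bound, so this is the correct potential function.

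First I would establish a linear-convergence phase analogous to Theorem \ref{thm:srklinear}: by Lemma \ref{lm:bfgsnofar}, each \bfgs/\dfp{} update preserves the sandwich $\nabla^2 f(\x_{t+1})\preceq \G_{t+1}\preceq \eta(1+Mr_t)\nabla^2 f(\x_{t+1})$ whenever the input matrix $\tilde{\G}_t$ dominates $\nabla^2 f(\x_{t+1})$. The rescaling $\tilde{\G}_t=(1+Mr_t)\G_t$ together with strongly self-concordance gives exactly this domination, and an inductive argument under the initialization (\ref{eq:bfgsini}) shows that $\G_t\preceq (3\eta_0/2)\nabla^2 f(\x_t)$ is maintained and that $\lambda(\x_t)$ decays at least geometrically at rate $(1-1/(2\eta_0))$. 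As in Lemma \ref{lm:linear-quadra} this yields $r_t\leq \lambda(\x_t)$, so $\sum_{s\geq 0} Mr_s$ and hence $\prod_{s\geq 0}(1+Mr_s)^2$ are bounded by a small absolute constant (controlled by the $\ln 2/4$ factor in (\ref{eq:bfgsini})).

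Next I would set up a recursion on the expected potential. Conditioning on the history up to step $t$ and applying Theorem \ref{thm:bfgs} with $\A=\nabla^2 f(\x_{t+1})$ and $\G=\tilde{\G}_t$ gives
\begin{align*}
\EB\bigl[\sigma_{\nabla^2 f(\x_{t+1})}(\G_{t+1})\bigm|\mathcal{F}_t\bigr]\leq \Bigl(1-\frac{k}{d\varkappa}\Bigr)\sigma_{\nabla^2 f(\x_{t+1})}(\tilde{\G}_t).
\end{align*}
Strong self-concordance supplies $(\nabla^2 f(\x_{t+1}))^{-1}\preceq (1+Mr_t)(\nabla^2 f(\x_t))^{-1}$, and a direct expansion of $\tilde{\G}_t=(1+Mr_t)\G_t$ gives
\begin{align*}
\sigma_{\nabla^2 f(\x_{t+1})}(\tilde{\G}_t)\leq (1+Mr_t)^2\,\sigma_{\nabla^2 f(\x_t)}(\G_t)+d\bigl((1+Mr_t)^2-1\bigr).
\end{align*}
Unrolling this recursion, inserting the product bound $\prod_s(1+Mr_s)^2\leq \mathrm{const}$ from the linear phase, and taking total expectation yields $\EB[\sigma_{\nabla^2 f(\x_t)}(\G_t)]\leq C\eta_0 d\,(1-k/(d\varkappa))^t$ for a small absolute constant $C$.

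Finally, since $\G_t\succeq \nabla^2 f(\x_t)$ the trace/spectral inequality gives $\G_t\preceq (1+\sigma_{\nabla^2 f(\x_t)}(\G_t))\nabla^2 f(\x_t)$, so we may take $\eta_t=1+\sigma_{\nabla^2 f(\x_t)}(\G_t)$ in Lemma \ref{lm:linear-quadra}. Substituting the bound on $\EB[\eta_t-1]$ into the inequality (\ref{eq:linear-quadra}) and absorbing the quadratic and cubic terms in $\lambda(\x_t)$ into a constant (using again the smallness of $M\lambda(\x_0)$) produces $\EB[\lambda(\x_{t+1})/\lambda(\x_t)]\leq 2d\eta_0(1-k/(d\varkappa))^t$, as claimed. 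The main obstacle will be the bookkeeping in the recursive step: one must verify that the additive correction $d((1+Mr_t)^2-1)$ does not accumulate to a term larger than $O(\eta_0 d)(1-k/(d\varkappa))^t$, which requires tightly coupling the self-concordance correction to the geometric decay of $\lambda(\x_t)$ established in the linear phase and exploiting that the initialization (\ref{eq:bfgsini}) scales like $1/(M\eta_0 d)$.
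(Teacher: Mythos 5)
Your plan follows the paper's route for most of the way: the same potential $\sigma_{\A}(\G)=\tr{\A^{-1}(\G-\A)}$, the one-step contraction from Theorem~\ref{thm:bfgs}, a linear phase obtained by rerunning the Theorem~\ref{thm:srklinear} argument with Lemma~\ref{lm:bfgsnofar} (this is the paper's Lemma~\ref{lm:BFGSlinear}), and a self-concordance correction for the rescaled estimator (your inequality $\sigma_{\nabla^2 f(\x_{t+1})}(\tilde{\G}_t)\leq(1+Mr_t)^2\sigma_{\nabla^2 f(\x_t)}(\G_t)+d\big((1+Mr_t)^2-1\big)$ is essentially Lemma~\ref{lm:strong_self_bfgs}). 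The genuine gap is in the final combination step, and it is exactly the ``bookkeeping'' you flag but do not resolve: unrolling the recursion for $\delta_t=\sigma_{\nabla^2 f(\x_t)}(\G_t)$ while bounding the additive corrections $2dMr_t\leq 2dM\lambda_t$ only through the linear-phase rate $\lambda_t\leq(1-1/(2\eta_0))^t\lambda_0$ produces the convolution $\sum_{s<t}(1-k/(d\varkappa))^{t-s}(1-1/(2\eta_0))^{s}$, which is of order $t\cdot\max\{1-k/(d\varkappa),\,1-1/(2\eta_0)\}^{t}$. Whenever $1/(2\eta_0)<k/(d\varkappa)$ (e.g.\ moderately large $\eta_0$), this is asymptotically much larger than $(1-k/(d\varkappa))^{t}$, so your intermediate claim $\EB[\sigma_{\nabla^2 f(\x_t)}(\G_t)]\leq C\eta_0 d(1-k/(d\varkappa))^{t}$ does not follow. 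The same problem reappears at the very end: from Lemma~\ref{lm:linear-quadra} one only gets $\lambda_{t+1}/\lambda_t\leq \delta_t/(1+\delta_t)+M\lambda_t/2+M^2\lambda_t^2/4$, and the term $M\lambda_t/2$, if controlled only by the linear rate, eventually exceeds $2d\eta_0(1-k/(d\varkappa))^{t}$; it cannot be ``absorbed into a constant.''

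The missing idea is the coupled Lyapunov argument of Lemma~\ref{lm:superlinear}, which the paper invokes at this point. One runs the recursion on $\theta_t=\delta_t+c\lambda_t$ (with $c=2dM$ here) and uses the key relation $\lambda_{t+1}\leq(1+M\lambda_t)^2\theta_t\lambda_t$: the $\lambda$-contribution to $\theta_{t+1}$ is then $\theta_t$ times the small factor $c\lambda_t$, so it enters only as a multiplicative correction $(1+\tfrac{\alpha c\lambda_t}{\alpha-1})\exp{(2M\lambda_t)}$ to the contraction factor $(1-k/(d\varkappa))$, and the product of these corrections over all $t$ is at most $2$ precisely because the initialization scales like $\lambda_0=\OM(1/(M\eta_0 d))$ and $\sum_t\lambda_t\leq 2\eta_0\lambda_0$. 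Together with $\delta_0\leq d(\eta_0-1)$ from Lemma~\ref{lm:GHneq_bfgs}, taking $c_1=M$, $c_2=c_3=2dM$, $\alpha=d\varkappa/k$, $s=\eta_0 d$ in Lemma~\ref{lm:superlinear} yields $\EBP{\lambda(\x_{t+1})/\lambda(\x_t)}\leq 2d\eta_0(1-k/(d\varkappa))^{t}$. In short: you must bootstrap the superlinear decay through the joint potential rather than feed in the exogenous linear-phase decay of $\lambda_t$; without that coupling the step you describe fails for large $t$.
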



For randomized block BFGS method, we can introduce the scaled directions to eliminate the condition number in the results of Theorem \ref{thm:bfgs} and \ref{thm:BFGS}. 
We first improve the result for matrix approximation as follows.

\begin{theorem}\label{thm:bfgs-1}
Consider the block BFGS update
\begin{align}
\label{eq:bfgsupdate-1}
    \G_{+}={ \text{\rm BlockBFGS}}(\G,\A,\LL^{\top}{\U})
\end{align}
where $\mG\succeq\A\in\RB^{d\times d}$.
If $\mu\I\preceq\A\preceq L\I$,  $\LL\in\BR^{d\times d}$ satisfies that $\LL^{\top}\LL=\G^{-1}$, and $\U\in\BR^{d\times k}$ is chosen as $[\mU]_{ij} \overset{{\rm i.i.d}}{\sim} \fN(0,1)$.
Then we have
\begin{align}
\label{eq:fasterbfgssigma-1}
    \EB\left[\sigma_{\A}(\G_{+})\right]\leq \left(1-\frac{k}{d}\right)\sigma_\A(\G).
\end{align}
\end{theorem}
\begin{proof}
According to the block-BFGS update \eqref{update:RaBFGS}, we have
\begin{align*}
   &\tr{ (\G_{+}-\A)\A^{-1}}\\
   &=\tr{(\G-\A)\A^{-1}}-\tr{\LL^{-1}\U\big(\U^{\top}\U\big)^{-1}\U^{\top}\LL^{-\top}\A^{-1}}+\tr{\A\LL^{\top}\U\big(\U\LL^{\top}\A\LL^{\top}\U\big)^{-1}\U^{\top}\LL}\\
   &=\tr{(\G-\A)\A^{-1}}-\tr{\LL^{-1}\U\big(\U^{\top}\U\big)^{-1}\U^{\top}\LL^{-\top}\A^{-1}}+k.
\end{align*}
The second term on the right-hand side holds that
\begin{align*}   
\EBP{\tr{\LL^{-1}\U\big(\U^{\top}\U\big)^{-1}\U^{\top}\LL^{-\top}\A^{-1}}}\! = \tr{\EBP{\U(\U^{\top}\U)^{-1}\U^{\top}}\LL^{-\top}\A^{-1}\LL^{-1}}
\!\overset{\eqref{eq:EP}}{=}\frac{k}{d}\tr{\A^{-1}\G}.
\end{align*} 
Thus, we combine the above results and get
\begin{align*}
 & \EB\left[\sigma_{\A}(\G_{+})\right] 
   =\sigma_\A(\G)-\frac{k}{d}\tr{\A^{-1}\G}+k  =\!\sigma_\A(\G)-\frac{k}{d}\tr{\A^{-1}(\G-\A)} 
   =\!\left(1-\frac{k}{d}\right)\sigma_\A(\G).
\end{align*}
\end{proof}
For given matrices $\mG,\mA,\mL,\mU\in\BR^{d\times d}$ satisfying the condition of Theorem \ref{thm:bfgs-1}, we can access $\LL_+\in\BR^{d\times d}$ such that $\G_{+}^{-1}=\LL
_{+}^\top\LL_{+}$ by a closed form expression~\cite[Section 9]{gower2017randomized}.

\begin{proposition}
\label{prop:efficientL}
Under the setting of Theorem \ref{thm:bfgs-1}, the matrix 
\begin{align*}\begin{split}
    \LL_{+} &= {\text{\rm UpdateL}}(\LL,\A,\U)\\
    & \triangleq\LL +\big(\U\big(\U^{\top}\U\big)^{-1/2}- \LL\A\LL^{\top}\U\big(\U^{\top}\LL\A\LL^{\top}\U\big)^{-1/2}\big)\big(\U^{\top}\LL\A\LL^{\top}\U\big)^{-1/2}\U^{\top}\LL,
\end{split}
\end{align*}
holds that $\G_{+}^{-1}=\LL_{+}^{\top}\LL_+$.
\end{proposition}
We can achieve Proposition \ref{prop:efficientL} by applying equation (9.11) of \cite{gower2017randomized} with~$\tilde{\S}_k=\U$ and $\LL_k=\LL^{\top}$, 
where $\tilde{\S}_k$ and $\mL_k$ follow notations of \cite{gower2017randomized}.
This result means we can access the scaling matrix $\mL_+$ efficiently during iterations.

Based on the BFGS update with scaling directions shown in Theorem \ref{thm:bfgs-1} and the efficient update rule shown in Proposition~\ref{prop:efficientL}, we propose the faster randomized block BFGS method in Algorithm~\ref{alg:fasterbfgs},
which has the following local convergence rate matching \srk~methods (see Appendix~\ref{app:fasterBFGSproof} for the detailed proof).
\begin{algorithm}[ht]
\caption{Faster Randomized Block BFGS}\label{alg:fasterbfgs}
\begin{algorithmic}[1]
\STATE \textbf{Input:} $\x_0$, $\G_0$, $\mL_0$, and $k$ \\
\STATE \textbf{for} $t=0,1\dots$\\
\STATE \quad $\x_{t+1}=\x_t-\G_t^{-1}\nabla f(\x_t)$ \\
\STATE \quad $r_t=\|\x_{t+1}-\x_{t}\|_{\x_t}$ \\[0.05cm]
\STATE \quad $\tilde{\G}_{t}=(1+Mr_t)\G_t$ \\[0.05cm]
\STATE \quad $\tilde{\LL}_{t}=\LL_t/\sqrt{1+Mr_t}$ \\
\STATE \quad Construct $\U_t$ by $\left[\U_{t}\right]_{ij}\overset{\rm{i.i.d}}{\sim} {\fN(0,1)}$ \\[0.05cm]
\STATE \quad $\G_{t+1}= \bfgs(\tilde{\G}_t,\nabla^2f(\x_{t+1}),\tilde{\LL}_{t}^{\top}\U_t)$ \label{line:fastBFGS} \\[0.05cm]
\STATE \quad ${\LL}_{t+1} = {\text{\rm UpdateL}}(\tilde{\LL}_t,\nabla^2 f(\x_{t+1}), \U_t)$
\STATE \textbf{end for}
\end{algorithmic}
\end{algorithm} 
\begin{theorem}
\label{thm:fasterBFGS}
Under Assumption~\ref{ass:smooth}, \ref{ass:strongconvex}, and \ref{ass:strongself}, we run faster randomized block BFGS method (Algorithm~\ref{alg:fasterbfgs}) with $\vx_0\in\BR^d$ and $\mL_0,\mG_0\in\BR^{d\times d}$ such that
\begin{align}
\label{eq:fasterbfgsini}
M\lambda_0 \leq \frac{\ln 2}{4}\cdot \frac{d-k}{\eta_0 d^2},  \quad
\nabla^2 f(\x_0)\preceq\G_0\preceq \eta_0\nabla^2f(\x_0), \quad \text{and} \quad \G_0^{-1}=\LL_0^{\top}\LL_0
\end{align}
for some $k<d$ and $\eta_0\geq 1$. 
Then we have 
\begin{align*}
    \BE\left[\frac{\lambda(\x_{t+1})}{\lambda(\x_t)}\right]\leq 2d\eta_0\left(1-\frac{k}{d}\right)^{t}.
\end{align*}
\end{theorem}

\begin{remark}
The condition on $\mG_0\in\BR^{d \times d}$ in Theorem~\ref{thm:BFGS} and \ref{thm:fasterBFGS} can be satisfied by simply setting $\mG_0=L\mI_d$. 
In addition, we can set $\mL_0=L^{-1/2}\mI_d$ to satisfy the condition on $\mL_0\in\BR^{d \times d}$ in Theorem \ref{thm:fasterBFGS}, then we can efficiently implement the update on $\LL_t$ by Proposition~\ref{prop:efficientL}.
\end{remark}

\begin{remark}
For $k=1$, the convergence rates provided by Theorem~\ref{thm:BFGS}~and~\ref{thm:fasterBFGS} match the results of the ordinary randomized BFGS/DFP methods~\citep{rodomanov2021greedy,lin2021greedy} and fast randomized BFGS methods~\citep{lin2021greedy} respectively. 
For $k=d$, the updates \eqref{eq:bfgsupdate} and \eqref{eq:bfgsupdate-1} holds $\mG_+=\mA$ almost surely. This leads to the local quadratic convergence rate of our (faster) randomized block BFGS/DFP methods, which is similar to the behaviors of \srk~methods shown in Corollary \ref{cor:recoverNewton}. 
\end{remark}



\section{Numerical Experiments}\label{sec:exp}

We conduct the experiments on the model of regularized logistic regression, which can be formulated as 
\begin{align}
\label{prob:logstic}
    \min_{\vx\in\BR^d} f(\x)\triangleq \frac{1}{n}\sum_{i=1}^{n}\ln\left(1+\exp{-b_i\va_i^{\top}\x}\right)+\frac{\gamma}{2}\|\x\|^2,
\end{align}
where $\va_i\in\RB^d$ and $b_i\in\{-1,+1\}$ are the feature and the corresponding label of the $i$-th sample respectively, and $\gamma>0$ is the regularization hyperparameter.

We refer to \srk~methods with randomized/greedy strategies (Algorithm~\ref{alg:SRK})~as R-\srk/G-\srk.
The SR1 methods with randomized/greedy strategies are referred to as R-SR1/G-SR1 \cite[Algorithm 4]{lin2021greedy}.
We refer to the existing randomized block BFGS method \cite[Algorithm 1]{kovalev2020fast} as RB-BFGSv1 and our randomized block BFGS/DFP methods (Algorithm~\ref{alg:bfgs}) as RB-BFGSv2/RB-DFP.
We denote our faster block BFGS method (Algorithm~\ref{alg:fasterbfgs}) as
FRB-BFGS.
We compare the proposed R-\srk, G-\srk, RB-BFGSv2, RB-DFP, and FRB-BFGS with baselines on problem~(\ref{prob:logstic}).
We do not include the empirical results of classical SR1/BFGS/DFP methods since G-SR1 has competitive performance with them \cite{rodomanov2021greedy}.
For all methods, we tune $\G_0$ and $M$ from~$\{\I_d, 10\I_d, 10^2\I_d, 10^3\I_d, 10^4\I_d\}$ and $\{1,10,10^2,10^3,10^4\}$ respectively.
We evaluate the performance on datasets ``MNIST'' ($d= 780$), ``sido0'' ($d=4,932$), and ``gisette'' ($d=5,000$). 
We conduct experiments using Python 3.8.12 on a PC with Apple M1.

We present the results of ``iteration numbers vs. gradient norm'' and ``running time (seconds) vs. gradient norm'' in Figure \ref{fig:experiment-10}, where we take $k=200$ for block quasi-Newton methods (R-\srk, G-\srk, RB-BFGSv1, RB-BFGSv2, and FRB-BFGS).
We observe the proposed R-\srk~and G-\srk~significantly outperform other methods.
The proposed block BFGS methods (RB-BFGSv2 and FRB-BFGS) outperform the block DFP method (RB-DFP), which is similar to the advantage of the BFGS method over the DFP method~\citep{rodomanov2021greedy}.


We also demonstrate the impact of parameter $k$ for  \srk~methods. 
It is natural that Figure~\ref{fig:diff_k_ra}(a), 2(b), and \ref{fig:diff_k_ra}(c) show the larger $k$ leads to faster convergence in terms of iteration numbers, which validates our theoretical analysis in Section \ref{sec:srk-opt}.
In addition, Figure~\ref{fig:diff_k_ra}(d), 2(e), and \ref{fig:diff_k_ra}(f) show SR-$k$ methods with $k>1$ significantly outperform SR$1$ in terms of the running time. 
This is because the block update can reduce the cache miss rate and take advantage of parallel computing.  
However, increasing~$k$ does not always result in less running time because the acceleration caused by the block update is limited by the cache size.

\begin{figure}[t]
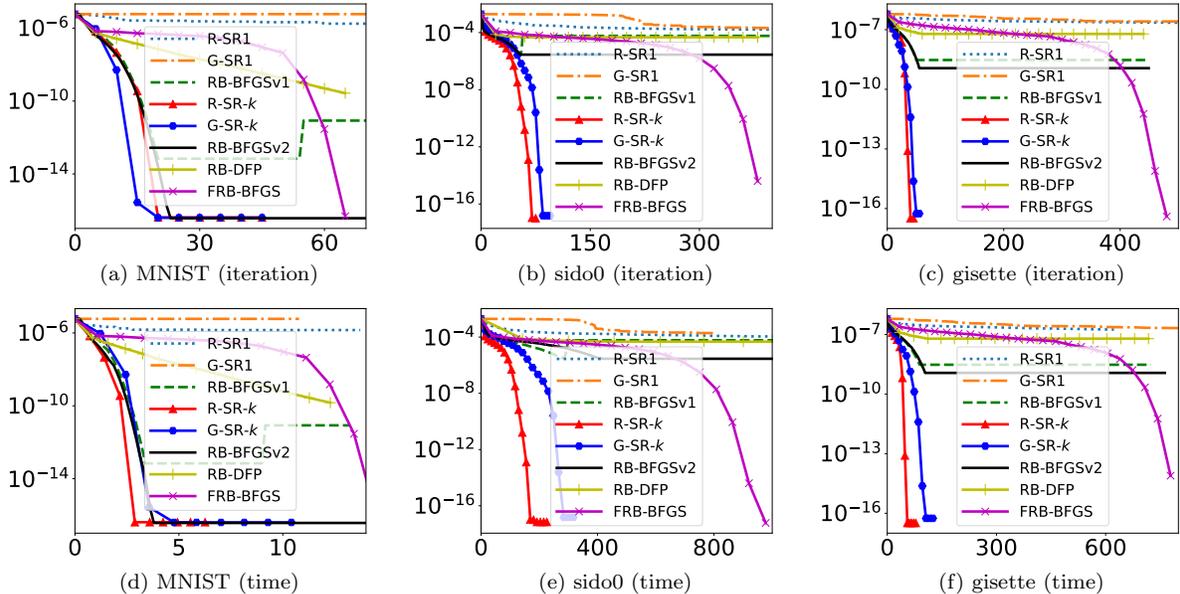

\centering
\begin{tabular}{cccc}
\includegraphics[scale=0.26]{Graph/MNIST.res.pdf} &
\includegraphics[scale=0.26]{Graph/sido0.res.pdf} &
\includegraphics[scale=0.26]{Graph/gisette.res.pdf}
\\[-0.2cm]
\footnotesize~~~~~ (a) MNIST (iteration) & \footnotesize~~~~~  (b) sido0 (iteration) & \footnotesize~~~~~  (c) gisette (iteration) \\[0.15cm]
\includegraphics[scale=0.26]{Graph/MNIST.time.pdf} & 
\includegraphics[scale=0.26]{Graph/sido0.time.pdf} &
\includegraphics[scale=0.26]{Graph/gisette.time.pdf}
\\[-0.15cm]
\footnotesize~~~~~   (d) MNIST (time) & \footnotesize~~~~~  (e) sido0 (time) & \footnotesize~~~~~  (f) gisette (time)
\end{tabular}\vskip-0.15cm
\caption{We show ``\#iteration vs. $\|\nabla f(\x)\|$'' and ``running time (s) vs. $\|\nabla f(\x)\|$'' on datasets ``MNIST'', ``sido0'', and ``gisette'', where  we take $k=200$ for all of block quasi-Newton methods.}\label{fig:experiment-10} \vskip-0.4cm
\end{figure}
\begin{figure}[t]
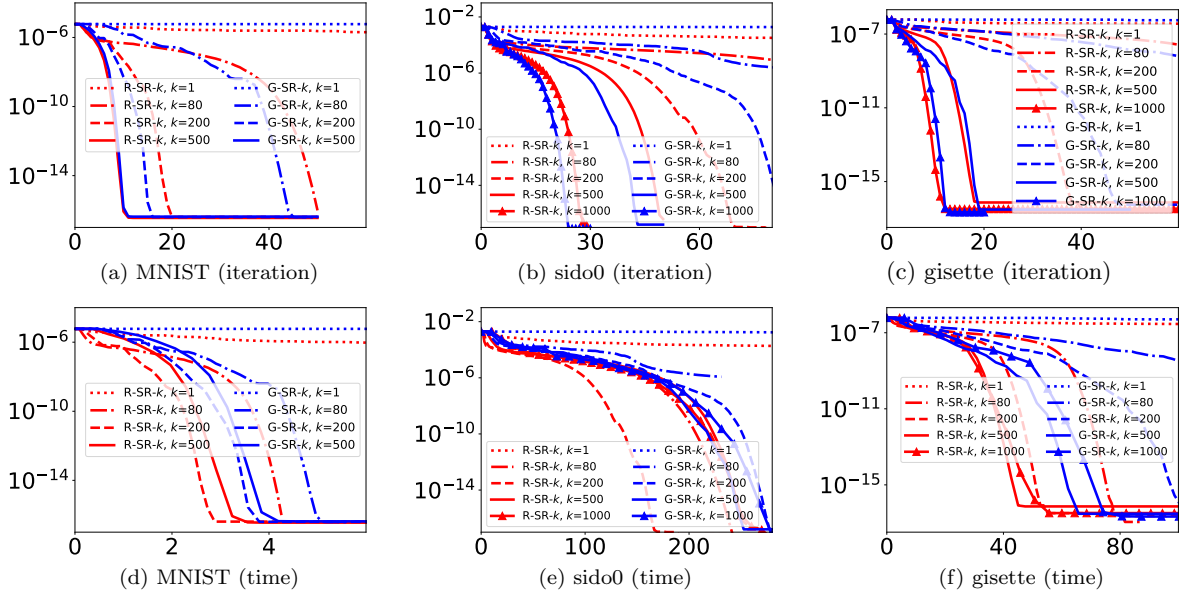

\centering
\begin{tabular}{cccc}
\includegraphics[scale=0.26]{Graph_rasrk/MNIST.res.pdf} &
\includegraphics[scale=0.26]{Graph_rasrk/sido0.res.pdf} &
\includegraphics[scale=0.26]{Graph_rasrk/gisette.res.pdf}
\\[-0.2cm]
\footnotesize~~~~~ (a) MNIST (iteration) & \footnotesize~~~~~   (b) sido0 (iteration) &\small (c) gisette (iteration) \\[0.15cm]
\includegraphics[scale=0.26]{Graph_rasrk/MNIST.time.pdf} & 
\includegraphics[scale=0.26]{Graph_rasrk/sido0.time.pdf} &
\includegraphics[scale=0.26]{Graph_rasrk/gisette.time.pdf}
\\[-0.2cm]
\footnotesize~~~~~   (d) MNIST (time) & \footnotesize~~~~~   (e) sido0 (time) & \footnotesize~~~~~  (f) gisette (time)
\end{tabular}\vskip-0.15cm
\caption{We show ``\#iteration vs. $\|\nabla f(\x)\|$'' and ``running time (s) vs. $\|\nabla f(\x)\|$'' for proposed R-\srk~and G-\srk~with different $k\in\{1,80,200,500\}$ on datasets ``MNIST'', and $k\in\{1,80,200,500,1000\}$ on datasets ``sido0'' and ``gisette''.}\label{fig:diff_k_ra} \vskip-0.5cm
\end{figure}
 {
Figure~\ref{fig:diff_k_ra} further illustrates the practical trade-off between the randomized and greedy strategies.
In most cases, the greedy strategy requires fewer iterations than the randomized strategy, which is consistent with its sharper theoretical guaranty.
However, the greedy strategy may take more CPU time because it needs to compute the top-$k$ diagonal entries of matrix $\tilde{\G}_t-\nabla^2 f(\x_{t+1})$ to form $\U_t$.
}
\section{Conclusion}
\label{sec:conclusion}
In this paper, we have proposed rank-$k$ (\srk) methods for convex optimization.
We have proved that \srk methods enjoy the explicit local superlinear rate of $\OM\left((1-k/d)^{t(t-1)/2}\right)$.
Our result successfully reveals the advantage of block-type updates in quasi-Newton methods, building a bridge between the theories of ordinary quasi-Newton methods and the standard Newton method.
We also provide the convergence rate of $\OM\left((1-k/(\varkappa d))^{t(t-1)/2}\right)$ for randomized block BFGS/DFP methods and design a faster randomized block BFGS method to match the rate of \srk methods.
In future work, 
it would be interesting to study the global convergence of block quasi-Newton methods ~\citep{jiang2023online,jiang2023accquasi,rodomanov2024global,jin2024nona,jin2024nonb} and their limited memory~\citep{berahas2022limited,liu1989limited,gao2023limited} as well as stochastic variants~\citep{mokhtari2018iqn,wang2017stochastic,wang2019stochastic,yang2022stochastic}.

\section*{Acknowledgments}
We are grateful to Peter Richtárik for sharing with us his work on block quasi-Newton methods, which inspired some of the ideas developed in this paper.
Chengchang Liu is supported by the National Natural Science Foundation of China (No. 624B2125).
Luo Luo is supported by the National Natural Science Foundation of China (No. 12571557) and the Shanghai Basic Research Program (23JC1401000). 
Cheng Chen is supported by the National Natural Science Foundation of China (No. 62306116) and the Shanghai Special Fund for Promoting High-Quality Industrial Development (No. 20250307).

\appendix
\section{An Elementary Proof of Lemma~\ref{lm:explicitbound}(a)}
\label{appen:addiproof}
Before proving Lemma \ref{lm:explicitbound}(a), we first provide the following lemma.
\begin{lemma}\label{lem:1d_gauss}
Assume matrix $\mP\in\BR^{d\times r}$ is column orthonormal with $r\le d$ and the $d$-dimensional random vector $\vv$ is distributed according to $\mathcal{N}_d(\vzero,\mP\mP^{\top})$, then it holds that $\BE\left[{\vv\vv^\top}/{(\vv^\top \vv)}\right]=\mP\mP^\top/r$.
\end{lemma}
\begin{proof}
The distribution $\vv\sim\mathcal{N}_d(\vzero,\mP\mP^{\top})$ implies there exists a $r$-dimensional normally distributed random vector $\vw\sim\mathcal{N}_r(\vzero,\mI_r)$ such that $\vv=\mP\vw$. Thus, we have
\begin{align*}
\BE\left[\frac{\vv\vv^\top}{\vv^\top \vv}\right] 
= & \BE\left[\frac{(\mP\vw)(\mP\vw)^\top}{(\mP\vw)^\top (\mP\vw)}\right] 
=  \BE\left[\frac{\mP\vw\vw^\top\mP^\top}{\vw^\top\mP^\top\mP\vw}\right] 
=  \mP\BE\left[\frac{\vw\vw^\top}{\vw^\top\vw}\right]\mP^\top 
 = \frac{1}{r}\mP\mP^\top,
\end{align*}
where the last step is because $\vw/\norm{\vw}$ is uniformly distributed on the $r$-dimensional unit sphere and its covariance matrix is $\mI_r/r$.
\end{proof}

Now we prove statement (a) of Lemma~\ref{lm:explicitbound}.
\begin{proof}    
We only need to prove equation \eqref{eq:EP} by using induction on $k$.
The induction base $k=1$ have been verified by Lemma~\ref{lem:1d_gauss}. 
Now we assume equation~\eqref{eq:EP}
holds for any $\mU\in\BR^{d\times k}$ such that each of its entries are independently distributed according to $\fN(0,1)$. 
We define the random matrix 
\begin{align*}
\hat{\mU} = \begin{bmatrix}
\mU & \vv 
\end{bmatrix}\in\BR^{d\times(k+1)},
\end{align*}
where $\vv\sim\fN_d(\vzero,\mI_d)$ is independent distributed to $\mU$. We define $\B=\mU(\mU^\top\mU)^{-1}\mU^\top$, then we use block matrix inversion formula to rewrite $(\hat{\mU}^\top\hat{\mU})^{-1}$ as follows
\begin{align*}
  &(\hat{\mU}^\top\hat{\mU})^{-1} 
  =   \left(\begin{bmatrix}\mU^\top \\ \vv^\top \end{bmatrix} \begin{bmatrix}\mU & \vv \end{bmatrix} \right)^{-1} = \left(\begin{bmatrix}
      \U^{\top}\U&\U^{\top}\v\\
      \v^{\top}\U&\v^{\top}\v
  \end{bmatrix}\right)^{-1}\\[0.15cm]
  &=\begin{bmatrix}
  \S  & -(\U^{\top}\U)^{-1}\U^{\top}\v(\v^{\top}(\I_d-\B)\v)^{-1}\\
   -(\v^{\top}(\I_d-\B)\v)^{-1}\v^{\top}\U(\U^{\top}\U)^{-1}  &~ (\v^{\top}(\I_d-\B)\v)^{-1}
  \end{bmatrix},
\end{align*}
where $\S= (\U^{\top}\U)^{-1}+(\U^{\top}\U)^{-1}\U^{\top}\v(\v^{\top}(\I_d-\B)\v)^{-1}\v^{\top}\U(\U^{\top}\U)^{-1}$.
Thus, we have
\begin{align*}
     & \hat{\mU}(\hat{\mU}^\top\hat{\mU})^{-1}\hat{\mU}^\top
     = \begin{bmatrix}\mU & \vv \end{bmatrix}\left(\begin{bmatrix}\mU^\top \\ \vv^\top \end{bmatrix} \begin{bmatrix}\mU & \vv \end{bmatrix} \right)^{-1} \begin{bmatrix}\mU^\top \\ \vv^\top \end{bmatrix} \\
     &= \B+\frac{\B\v\v^{\top}\B}{\v^{\top}(\I_d-\B)\v}-\frac{\B\v\v^{\top}}{\v^{\top}(\I_d-\B)\v}-\frac{\v\v^{\top}\B}{\v^{\top}(\I_d-\B)\v}+\frac{\v\v^{\top}}{\v^{\top}(\I_d-\B)\v}\\
     &=\B+\frac{(\mI_d-\B)\vv\vv^\top(\mI_d-\B)}{\vv^\top(\mI_d-\B)\vv}.
\end{align*}
Since the rank of the projection matrix $\mI_d-\B$ is $d-k$, we can write  $\mI_d-\B=\mQ\mQ^\top$, where $\mQ\in\BR^{d\times (d-k)}$ is column orthonormal.
\begin{samepage}
Thus, we achieve
\begin{align*}
 & \EBP{\hat{\mU}(\hat{\mU}^\top\hat{\mU})
 \hat{\mU}^\top}
=\frac{k}{d}\mI_d +\BE_{\mU}\left[\BE_{\vv}\left[\frac{(\mI_d-\B)\vv\vv^\top(\mI_d-\B)}{\vv^\top(\mI_d-\B)\vv}\,\Big|\,\mU\right]\right]\\
&=\frac{k}{d}\mI_d +\BE_{\mU}\left[\BE_{\vv}\left[\frac{(\mQ\mQ^\top\vv)(\vv^\top\mQ\mQ^\top)}{(\vv^\top\mQ\mQ^\top)(\mQ\mQ^\top\vv)}\,\Big|\,\mU\right]\right]
=\frac{k}{d}\mI_d +\frac{1}{d-k}\BE_{\mU}[\mQ\mQ^\top] \\
&=\frac{k}{d}\mI_d +\frac{1}{d-k}\BE_{\mU}[\mI_d-\B] 
=\frac{k}{d}\mI_d +\frac{1}{d-k}\left(\mI_d-\frac{k}{d}\mI_d\right) 
=\frac{k+1}{d}\mI_d,
\end{align*}
\end{samepage}
which completes the induction.
In the above derivation, the first and the fifth equalities come from the inductive hypothesis; third equality is achieved by applying Lemma~\ref{lem:1d_gauss} with $\mP=\mQ\mQ^\top$ and $r=d-k$, and the fact~$\mQ\mQ^{\top}\vv\sim\mathcal{N}_d(\vzero,\mQ\mQ^{\top})$.
\end{proof}

\section{Auxiliary Lemmas for Non-negative Sequences}
The following lemmas on non-negative sequences
extend Theorem 23 of \cite{lin2021greedy} and Theorem 4.7 of \cite{rodomanov2021greedy}, leading to a general framework for the analysis of (block) quasi-Newton methods.

\begin{lemma}
\label{lm:superlinear}
Let $\{\lambda_t\}$ and $\{\delta_t\}$ be two non-negative random sequences that satisfy
\begin{align}
\label{eq:supercondi}
  &\BE_{t}\left[\delta_{t+1}\right]\leq \left(1-\frac{1}{\alpha}\right)(1+c_1\lambda_t)^2(\delta_t+c_2\lambda_t), ~~\lambda_{t+1}\leq (1+c_1\lambda_t)^2(\delta_t+c_3\lambda_t)\lambda_t,\\
\label{eq:supercondi_2}
 &\delta_0+c\lambda_0 \leq s,~~~\text{and}~~~ \lambda_{t}\leq \left(1-\frac{1}{\beta}\right)^t\lambda_0,
\end{align}
for some $c_1, c_2, c_3\geq 0$ with $c=\max\{c_2,c_3\}$, $s\geq 0$,  $\alpha>1$, and $\beta>1$, where we define $\EB_{t}[\,\cdot\,]\triangleq \EB[\,\cdot\,|\,\delta_0,\cdots,\delta_{t},\lambda_0,\cdots,\lambda_{t}]$. 
If $\lambda_0>0$ is sufficient small such that 
\begin{align}
\label{eq:supercondiini}
    \lambda_0\leq \frac{\ln 2}{\beta (2c_1+c(\alpha/(\alpha-1)))},
\end{align}
we have $\BE\left[{\lambda_{t+1}}/{\lambda_t}\right]\leq 2s\left(1-{1}/{\alpha}\right)^{t}.$
\end{lemma}

\begin{proof}
We denote $  \theta_t\triangleq \delta_t+c \lambda_t.$
Noticing that we have $\exp{x}\geq 1+x$ for all $x\geq 0$. Applying this fact with $x=c_1\lambda_t$ and the definition $c=\max\{c_2,c_3\}$, we have
\begin{align}
\label{eq:lambdat_leq_thetat}
  &\EB_{t}[\delta_{t+1}] \overset{\eqref{eq:supercondi}}{\leq}\left(1-\frac{1}{\alpha}\right)(\delta_t+c_2\lambda_t)(1+c_1\lambda_t)^2\leq \left(1-\frac{1}{\alpha}\right)\theta_t\exp{2c_1\lambda_t}\\
\label{eq:lambda_t_succ}
    &\text{and} \quad \lambda_{t+1} \overset{\eqref{eq:supercondi}}{\leq} (\delta_t+c_3\lambda_t)(1+c_1\lambda_t)^2\lambda_t \leq \theta_t\lambda_t\exp{2c_1\lambda_t}.
\end{align}
We denote $\tilde{c}\triangleq2c_1+c\alpha/(\alpha-1)$, then it holds that
\begin{align}
\label{eq:thetat+1leqs}
\begin{split}
    &\EB_{t}[\theta_{t+1}]= \BE_t[\delta_{t+1}+c \lambda_{t+1}] \\
    &\,\,\,\,\overset{\eqref{eq:lambdat_leq_thetat},\eqref{eq:lambda_t_succ}}{\leq} 
    \left(1-\frac{1}{\alpha}\right)\theta_t\exp{2c_1\lambda_t} + c\theta_t\lambda_t\exp{2c_1\lambda_t}\\
    &~~~~~=~~~\left(1-\frac{1}{\alpha}\right)\left(1+\frac{c\alpha \lambda_t}{\alpha-1}\right)\theta_t \exp{2c_1\lambda_t}
    \\
    &~~~~~\leq~~~\left(1-\frac{1}{\alpha}\right)\theta_t\exp{\left(2c_1 + \frac{c\alpha}{\alpha-1}\right)\lambda_t}\\
    &~~~~~=~~~\left(1-\frac{1}{\alpha}\right)\theta_t\exp{\tilde{c}\lambda_t}
    \overset{\eqref{eq:supercondi_2}}{\leq} \left(1-\frac{1}{\alpha}\right)\theta_t\exp{\tilde{c}\left(1-\frac{1}{\beta}\right)^t\lambda_0},
    \end{split}
\end{align}
where the second inequality comes from $\exp{x}\geq 1+x$ with $x=c\alpha\lambda_t/(\alpha-1)$.
Taking expectation on both sides of equation \eqref{eq:thetat+1leqs}, we have
\begin{align}
\label{eq:expthetat}
    \EB[\theta_{t+1}]\leq \left(1-\frac{1}{\alpha}\right)\exp{\tilde{c}\left(1-\frac{1}{\beta}\right)^t\lambda_0}\EB[\theta_t],
\end{align}
where we use the fact $\EB[\EB_{t}[\delta_{t+1}]] = \EB[\delta_{t+1}]$.
Therefore, we finish the proof as follows
\begin{align*}    
& \BE\left[\frac{\lambda_{t+1}}{\lambda_t}\right]
\overset{\eqref{eq:lambda_t_succ}}{\leq} \BE[\theta_{t}\exp{2c_1\lambda_t}] 
\overset{\eqref{eq:supercondi_2}}{\leq} \EBcommon{\theta_{{t}}}\exp{\tilde{c}\big(1-\frac{1}{\beta}\big)^t\lambda_0} 
\\
&\overset{\eqref{eq:expthetat}}{\leq}  \left(1-\frac{1}{\alpha}\right)\EBcommon{\theta_{t-1}}\exp{\tilde{c}\big(1-\frac{1}{\beta}\big)^{t-1}\lambda_0+\tilde{c}\big(1-\frac{1}{\beta}\big)^t\lambda_0}\\
&\overset{\eqref{eq:expthetat}}{\leq}\left(1-\frac{1}{\alpha}\right)^{t} \EBcommon{\theta_0}\exp{\tilde{c}\sum_{i=0}^{{t}}\big(1-\frac{1}{\beta}\big)^i\lambda_0}\\
&~~{\leq}~~  \left(1-\frac{1}{\alpha}\right)^t \EBcommon{\delta_0+c\lambda_0}\exp{\tilde{c}\beta\lambda_0}\overset{\eqref{eq:supercondi_2},\,\eqref{eq:supercondiini}}{\leq} \left(1-\frac{1}{\alpha}\right)^t 2s.
\end{align*}

\end{proof}

\begin{lemma}
\label{lm:linear}
Let~$\{\lambda_t\}$ and $\{\tilde{\eta}_t\}$ be two positive sequences with $\tilde{\eta}_t\geq 1$ and satisfy 
\begin{align}
\label{eq:lambda_t_1}
    &\lambda_{t+1}\leq \left(1-\frac{1}{\tilde{\eta}_t}\right)\lambda_t + \frac{m_1\lambda_t^2 }{2} + \frac{m_1^2\lambda_t^3}{4\tilde{\eta}_t}~~~\text{for all $\lambda_t$ such that $m_1\lambda_t\leq 2$}\\
    \label{eq:etat+1}
    &{\text {and}}~~~\tilde{\eta}_{t+1}\leq (1+m_2\lambda_t)^2\tilde{\eta}_t
\end{align}
for some $m_1,m_2>0$.
If
\begin{align}
    \label{eq:linear_initial}
    m\lambda_0\leq \frac{\ln ({3}/{2})}{4\tilde{\eta}_0},
\end{align}
where $m\triangleq\max\{m_1,m_2\}$,
then it holds that
\begin{align}
& \tilde{\eta}_t\leq\tilde{\eta}_0 \exp{2m\sum_{i=0}^{t-1}\lambda_i}\leq \frac{3\tilde{\eta}_0}{2} \label{eq:tilde_eta} \\
&\text{and}\quad
\lambda_t\leq\left(1-\frac{1}{2\tilde{\eta}_0}\right)^t\lambda_0. \label{eq:lambda_t_linear}
\end{align}
\end{lemma}
\begin{proof}
We prove the results of~\eqref{eq:tilde_eta} and \eqref{eq:lambda_t_linear} by induction. 
In the case of $t=0$, inequalities \eqref{eq:tilde_eta} and  \eqref{eq:lambda_t_linear} are satisfied naturally by conditions $\tilde\eta_t\geq 1$ and condition (\ref{eq:etat+1}), where we define $\sum_{i=0}^{t}\lambda_i=0$ if $t<0$. 
Now we suppose inequalities~\eqref{eq:tilde_eta} and \eqref{eq:lambda_t_linear} holds for $t=0,\dots,\hat{t}$, then we have
\begin{align}
\label{eq:bound-sum-lambdai}
m\sum_{i=0}^{\hat{t}}\lambda_i\overset{\eqref{eq:lambda_t_linear}}{\leq} m\lambda_0\sum_{i=0}^{\hat{t}}\left(1-\frac{1}{2\tilde{\eta}_0}\right)^{i} \leq  m \lambda_0 \cdot 2\tilde{\eta}_0 \overset{\eqref{eq:linear_initial}}{\leq} \frac{\ln(3/2)}{2}.
\end{align}
In the case of $t=\hat{t}+1$, we use induction to achieve
\begin{equation}
\label{eq:simple_induct}
\begin{split}
  \frac{ 1-{m_1\lambda_{\hat{t}}}/{2}}{\tilde{\eta}_{\hat{t}}}&\geq\frac{\exp{-m_1\lambda_{\hat{t}}}}{\tilde{\eta}_{\hat{t}}}\overset{\eqref{eq:tilde_eta}}{\geq} \frac{\exp{-m_1\lambda_{\hat{t}}}\cdot \exp{-2m\sum_{i=0}^{\hat{t}-1}\lambda_i}}{\tilde{\eta}_0}\\
  &\geq
  \frac{\exp{-2m\sum_{i=0}^{\hat{t}}\lambda_i}}{\tilde{\eta}_0}\overset{\eqref{eq:bound-sum-lambdai}}{\geq}  \frac{2}{3\tilde{\eta}_0},
\end{split}
\end{equation}  
where the first step is due to the fact $\exp{-2x}\leq 1-x$ with $x=m_1\lambda_{\hat t}/2\in[0,1/2]$ and the last second step is based on $m\geq m_1$. We also have 
\begin{equation} 
\label{eq:lambda_t_bound}
m_1\lambda_{\hat t} \leq m\lambda_{\hat{t}}\overset{\eqref{eq:lambda_t_linear}}{\leq} m\lambda_0\overset{\eqref{eq:linear_initial}}{\leq} \frac{1}{8\tilde{\eta}_0}\leq 2,
\end{equation}
where the last step is due to $\tilde\eta_0\geq 1$.
According to the condition \eqref{eq:lambda_t_1}, we have
\begin{align*}
&  \lambda_{\hat{t}+1}\overset{\eqref{eq:lambda_t_1}}{\leq} 
     \left(1-\frac{1}{\tilde{\eta}_{\hat{t}}}\right)\lambda_{\hat{t}} + \frac{m_1\lambda_{\hat{t}}^2 }{2} + \frac{m_1^2\lambda_{\hat{t}}^3}{4\tilde{\eta}_{\hat{t}}} =\left(1+\frac{m_1\lambda_{\hat{t}}}{2}\right)\cdot\left(1-\frac{1-{m_1\lambda_{\hat{t}}}/{2}}{\tilde{\eta}_{\hat{t}}}\right)\lambda_{\hat{t}}\\
&~\overset{\eqref{eq:simple_induct},\,\eqref{eq:lambda_t_bound}}{\leq}  \left(1+\frac{1}{16\tilde{\eta}_0}\right)\cdot\left(1-\frac{2}{3\tilde{\eta}_0}\right)\lambda_{\hat{t}} \leq \left(1-\frac{1}{2\tilde{\eta}_0}\right)\lambda_{\hat{t}}
\overset{\eqref{eq:lambda_t_linear}}{\leq} \left(1-\frac{1}{2\tilde{\eta}_0}\right)^{{\hat{t}}+1}\lambda_0.
\end{align*}
The induction also implies
\begin{align*}
   \tilde{ \eta}_{\hat{t}+1}\overset{\eqref{eq:etat+1}}{\leq}(1+m_2\lambda_{\hat{t}})^2\tilde{\eta}_{\hat{t}}\leq \tilde{\eta}_{\hat{t}} \exp{2m\lambda_{\hat{t}}}\overset{\eqref{eq:tilde_eta}}{\leq} \tilde{\eta}_0\exp{2m\sum_{i=0}^{\hat{t}}\lambda_{\hat{t}}}\overset{\eqref{eq:bound-sum-lambdai}}{\leq } \frac{3\tilde{\eta}_0}{2},
\end{align*}
where the second inequality is due to the fact $\exp{x}\geq 1+x$ with $x = m_2\lambda_{\hat{t}}\geq 0$. Thus, we finish the proof.
\end{proof}
\section{The Proof of Theorem~\ref{thm:srklinear}}
\label{sec:srklinear}
We first present a lemma from~\cite{lin2021greedy}.
\begin{lemma}[{\citet[Lemma 25]{lin2021greedy}}]
\label{lm:strong_self}
If the twice differentiable function $f(\cdot)$ satisfies Assumptions~\ref{ass:strongconvex} and \ref{ass:strongself}, and the positive-definite matrix~$\G\in\RB^{d\times d}$ and the vector $\vx\in\BR^d$ satisfy $\nabla^2 f(\x)\preceq \G\preceq \eta\nabla^2 f(\x)$ for some~$\eta>1$, then we have
\begin{align}
& \nabla^2 f(\x_{+})\preceq\tilde{\G}\preceq \eta(1+Mr)^2\nabla^2 f(\x_{+}) \label{eq:tilde_G_geq} \\
& \text{and}~~~\tau_{\nabla^2f(\x_{+})}(\tilde{\G})\leq (1+Mr)^2\left(\frac{\tau_{\nabla^2 f(\x)}(\G)}{\trcommon{\nabla^2f(\x)}}+2Mr\right)\tr{\nabla^2f(\x_{+})}    \label{eq:delta_SRK_lin}
\end{align}
for any $\vx_+\in\BR^d$, where $\tilde{\G}=(1+Mr)\G$ and $r=\|\x-\x_{+}\|_{\x}$.
\end{lemma}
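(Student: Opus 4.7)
The plan is to treat the two claims of Lemma~\ref{lm:strong_self} as independent consequences of $M$-strong self-concordance applied at the anchor points $\x$ and $\x_+$, combined with the given sandwich $\nabla^2 f(\x)\preceq\mG\preceq\eta\nabla^2 f(\x)$ and the scaling $\tilde\mG=(1+Mr)\mG$. The only ``engine'' used is the inequality in Assumption~\ref{ass:strongself}, specialized to $\z=\x$ (so that $\|\y-\x\|_{\z}=r$ when $\{\y,\x\}=\{\x,\x_+\}$) and to $\w\in\{\x,\x_+\}$; this yields the two one-step Hessian comparisons
\begin{align*}
\nabla^2 f(\x_+)\preceq(1+Mr)\,\nabla^2 f(\x),\qquad \nabla^2 f(\x)\preceq(1+Mr)\,\nabla^2 f(\x_+),
\end{align*}
which will be the workhorse for both parts.

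For the spectral sandwich \eqref{eq:tilde_G_geq}, I will just chain these with the given bracket on $\mG$. For the lower bound, $\nabla^2 f(\x_+)\preceq(1+Mr)\nabla^2 f(\x)\preceq(1+Mr)\mG=\tilde\mG$; for the upper bound, $\tilde\mG=(1+Mr)\mG\preceq\eta(1+Mr)\nabla^2 f(\x)\preceq\eta(1+Mr)^2\nabla^2 f(\x_+)$. Neither step requires anything beyond monotonicity of the Loewner order under positive scalar multiplication.

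For the trace inequality \eqref{eq:delta_SRK_lin}, I will write, with the shorthand $T=\tr{\nabla^2 f(\x)}$ and $T_+=\tr{\nabla^2 f(\x_+)}$,
\begin{align*}
\tau_{\nabla^2 f(\x_+)}(\tilde\mG)=(1+Mr)\tr{\mG}-T_+=(1+Mr)\tau_{\nabla^2 f(\x)}(\mG)+(1+Mr)T-T_+.
\end{align*}
To match the target, I will (i) upper-bound the first summand by $(1+Mr)^2\frac{\tau_{\nabla^2 f(\x)}(\mG)}{T}T_+$, which reduces to the inequality $T\leq(1+Mr)T_+$ coming from the trace of $\nabla^2 f(\x)\preceq(1+Mr)\nabla^2 f(\x_+)$; and (ii) bound $(1+Mr)T-T_+\leq((1+Mr)^2-1)T_+$ via the same scalar inequality, and then note $(1+Mr)^2-1=Mr(2+Mr)\leq 2Mr(1+Mr)\leq 2Mr(1+Mr)^2$. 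Adding these two bounds gives exactly the factorization $(1+Mr)^2\bigl(\tau_{\nabla^2 f(\x)}(\mG)/T+2Mr\bigr)T_+$ as required.

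Almost no obstacle is anticipated: the entire argument is a careful bookkeeping exercise around the two Hessian comparison inequalities. The only mildly delicate point is the step (i) above, where I must insert and cancel $T$ on the right side so that the ratio $\tau_{\nabla^2 f(\x)}(\mG)/T$ appears and the leftover factor $T$ can legitimately be traded for $(1+Mr)T_+$. Once that algebraic rearrangement is written down correctly, the remainder is elementary.
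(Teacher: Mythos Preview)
Your proof is correct. Note, however, that the paper does not give its own proof of this lemma: it is quoted verbatim as \cite[Lemma~25]{lin2021greedy} and used as a black box, so there is no in-paper argument to compare against. Your derivation is exactly the standard one underlying that cited result---specializing Assumption~\ref{ass:strongself} with $\z=\x$ and $\w\in\{\x,\x_+\}$ to get the two-sided Hessian comparison $\nabla^2 f(\x_+)\preceq(1+Mr)\nabla^2 f(\x)$ and $\nabla^2 f(\x)\preceq(1+Mr)\nabla^2 f(\x_+)$, then chaining these with the given bracket on~$\mG$ for~\eqref{eq:tilde_G_geq} and performing the trace bookkeeping for~\eqref{eq:delta_SRK_lin}. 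Every step checks out, including the two places where you invoke $T\le(1+Mr)T_+$ (which indeed follows by tracing the second Hessian comparison, using $\tau_{\nabla^2 f(\x)}(\mG)\ge0$ from $\mG\succeq\nabla^2 f(\x)$) and the elementary chain $(1+Mr)^2-1=Mr(2+Mr)\le 2Mr(1+Mr)\le 2Mr(1+Mr)^2$.
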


Now, we prove Theorem~\ref{thm:srklinear} by using Lemma~\ref{lm:sr1good}, \ref{lm:linear-quadra}, \ref{lm:linear}, and \ref{lm:strong_self}.

\begin{proof}
Denote $\tilde{\eta}_t \triangleq \min_{\G_t\preceq \eta\nabla^2 f(\x_t)}\eta$. We first prove that for all $t\geq 0$, it holds
\begin{align}\label{psd:G_t}
    \nabla^2 f(\x_t)\preceq \G_t\preceq \tilde{\eta}_t\nabla^2 f(\x_t).
\end{align}
The result of $\G_t\preceq \tilde{\eta}_t\nabla^2 f(\x_t)$ holds by the definition of $\tilde{\eta}_t$, and we prove~$\nabla^2 f(\x_t)\preceq \G_t$ by induction.
For the case of $t=0$, it holds naturally by the initial condition. Suppose it holds for $t=0,1,\dots,\hat{t}$. Then for the case of $t=\hat{t}+1$, we have
\begin{align*}
\G_{\hat{t}+1}=\srk(\tilde{\G}_{\hat{t}},\nabla^2 f(\x_{\hat{t}+1}),\U_{\hat{t}})\overset{\eqref{eq:srk_good}}{\succeq} \tilde{\G}_{\hat{t}} \overset{\eqref{eq:tilde_G_geq}}{\succeq} \nabla^2  f(\x_{\hat{t}+1}),
\end{align*}
which finishes the induction.

Let $\lambda_t\triangleq\lambda(\x_t)$. 
Applying equation (\ref{psd:G_t}) and Lemma~\ref{lm:linear-quadra}, we achieve
\begin{align*}
        \lambda_{t+1}\leq \left(1-\frac{1}{\tilde{\eta}_t}\right)\lambda_t + \frac{M\lambda_t^2}{2} + \frac{M^2\lambda_t^3}{4\tilde{\eta}_t}~~~\text{for all $\lambda_t$ such that $M\lambda_t\leq 2$}.
\end{align*}
We then figure out the relation between $\tilde{\eta}_{t+1}$ and $\tilde{\eta}_t$. 
Let $r_t=\|\x_{t+1}-\x_t\|_{\x_t}$.
Applying equation (\ref{psd:G_t}) and Lemma~\ref{lm:strong_self} with $\G=\G_t$, $\x=\x_t$, $\x_{+}=\x_{t+1}$, $\eta = \tilde{\eta}_t$, and $r=r_t$, we achieve
\begin{align}
\label{eq:tildeG_t}
      \nabla^2 f(\x_{t+1})\preceq\tilde{\G}_t\preceq \tilde{\eta}_t(1+Mr_t)^2\nabla^2 f(\x_{t+1}).
\end{align}
Using Lemma~\ref{lm:sr1good} with $\G=\tilde{\G}_t$, $\A= \nabla^2 f(\x_{t+1})$, and $\eta=\tilde{\eta}_t(1+Mr_t)^2$, we have
\begin{align*}   
& \G_{t+1}
\overset{\eqref{eq:srk_good},\,\eqref{eq:tildeG_t}}{\preceq} \tilde{\eta}_t(1+Mr_t)^2\nabla^2 f(\x_{t+1})\overset{\eqref{eq:linear-quadra}}{\preceq} \tilde{\eta}_t(1+M\lambda_t)^2\nabla^2 f(\x_{t+1}),
\end{align*}
which means
$\tilde{\eta}_{t+1}=\min_{\G_{t+1}\preceq \eta \nabla^2 f(\x_{t+1})}\eta\leq \tilde{\eta}_t(1+M\lambda_t)^2$.
Hence, sequences $\{\tilde{\eta}_t\}$ and~$\{\lambda_t\}$ satisfy the conditions of Lemma~\ref{lm:linear} with $m_1=m_2=M$. We then apply Lemma~\ref{lm:linear} to obtain
\begin{align*}
 \nabla^2 f(\x_t)\preceq \tilde{\G}_t \preceq \frac{3\tilde{\eta}_0}{2}\nabla^2f(\x_t)\preceq\frac{3{\eta}_0}{2}\nabla^2 f(\x_t)
\end{align*}
and
\begin{align*}
 \lambda(\x_t)\leq \left(1-\frac{1}{2\tilde{\eta}_0}\right)^t\lambda(\x_0)\leq\left(1-\frac{1}{2{\eta}_0}\right)^t\lambda(\x_0).
\end{align*}
\end{proof}

\section{The Proof of Theorem~\ref{thm:srk}}
\label{sec:srk_proof}
We first provide some auxiliary lemmas that will be used in our later proof.

\begin{lemma}
\label{lm:GHneq}
For any positive definite symmetric matrices $\G,\H\in\RB^{d\times d}$ such that~$\H\preceq\G$, 
it holds that
\begin{align}
\label{eq:GHneq}
\G\preceq \left(1+\frac{\hat\varkappa d \tau_{\H}(\G)}{\trcommon{\H}}\right)\H,
\end{align}
where $\hat\varkappa$ is the condition number of $\H$ and the notation of $\tau_\H(\G)$ follows the expression of \eqref{eq:measure_srk}.
If it further satisfies that $\G\preceq\eta\H$ for some $\eta\geq 1$, then we have
\begin{align}
\label{eq:GHneq_2}
  \qquad\frac{\tau_{\H}(\G)}{{\rm tr}(\H)} \leq \eta -1.
\end{align}
\end{lemma}
\begin{proof}
We obtain inequality \eqref{eq:GHneq} by statement 3) of equation (42) from Lin et al. \cite[Lemma 25]{lin2021greedy}.
We prove inequality \eqref{eq:GHneq_2} by the definition of $\tau_{\H}(\G)$ as follows
\begin{align*}
     \frac{\tau_{\H}(\G)}{\trcommon{\H}} =\frac{ \trcommon{\G-\H}}{\trcommon{\H}}{\leq} \frac{\trcommon{(\eta-1)\H}} {\trcommon{\H}}= \eta-1.
\end{align*}

\end{proof}

\begin{lemma}[{\citet[Lemma 26]{lin2021greedy}}]
\label{lm:randomsequence_lin}
Suppose the non-negative random sequence $\{X_t\}$
satisfies $\EB[X_t]\leq c\left(1-{1}/{\alpha}\right)^t$ for all $t\geq0$ and some constants $c\geq 0$ and $\alpha>1$. 
Then for any $\delta\in(0,1)$, we have $  X_t\leq {c\alpha^2}\left(1-{1}/{(1+\alpha)}\right)^t/\delta$
for all $t$ with probability at least~$1-\delta$.
\end{lemma}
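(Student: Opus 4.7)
The plan is to combine a pointwise Markov bound with a union bound over all time steps, choosing the target rate slightly larger than the expected decay rate $1-1/\alpha$ so that the resulting series of tail probabilities is summable. Concretely, set
\[
r \triangleq 1 - \frac{1}{1+\alpha} = \frac{\alpha}{\alpha+1}, \qquad C \triangleq \frac{a\alpha^2}{\delta},
\]
so that the conclusion is equivalent to
\[
\mathbb{P}\bigl(\exists\, t \geq 0 :\, X_t > C r^t\bigr) \leq \delta.
\]

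First, for each fixed $t$, since $X_t \geq 0$, Markov's inequality combined with the hypothesis $\mathbb{E}[X_t] \leq a(1-1/\alpha)^t$ gives
\[
\mathbb{P}(X_t > C r^t) \leq \frac{\mathbb{E}[X_t]}{C r^t} \leq \frac{a}{C} \cdot \left(\frac{1-1/\alpha}{r}\right)^{t}.
\]
A short computation shows $(1-1/\alpha)/r = ((\alpha-1)/\alpha)\cdot((\alpha+1)/\alpha) = 1 - 1/\alpha^2$, so the per-$t$ bound simplifies to $(a/C)(1-1/\alpha^2)^t$.

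Next, take a union bound over $t \in \{0,1,2,\dots\}$ and sum the resulting geometric series:
\[
\mathbb{P}\bigl(\exists\, t \geq 0 :\, X_t > C r^t\bigr) \leq \sum_{t=0}^{\infty} \frac{a}{C}\left(1 - \frac{1}{\alpha^2}\right)^t = \frac{a}{C}\cdot \alpha^2 = \delta,
\]
by the choice $C = a\alpha^2/\delta$. Equivalently, with probability at least $1-\delta$, $X_t \leq C r^t = (a\alpha^2/\delta)(1 - 1/(1+\alpha))^t$ for every $t$, which is the claim.

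There is no real obstacle: the content is entirely in the choice of inflated rate $r$. If one naively applies Markov at the original rate $1-1/\alpha$, the per-$t$ tail probability is a constant and not summable; inflating to $r = \alpha/(\alpha+1)$ shrinks the ratio to $1-1/\alpha^2 < 1$, and the total failure probability then collapses to $a\alpha^2/C$, which the stated constant $C = a\alpha^2/\delta$ is tuned to match exactly.
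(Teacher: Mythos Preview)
Your proof is correct. The paper does not supply its own argument for this lemma but simply cites it from \cite{lin2021greedy}; your Markov-plus-union-bound argument with the inflated rate $r=\alpha/(\alpha+1)$ is exactly the standard proof, and all the constants check out.
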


Now, we provide the proof of Theorem~\ref{thm:srk}.

\begin{proof}
We denote 
$\EB_{t}[\,\cdot\,]\triangleq \EB[\,\cdot\,|\,\U_0,\cdots,\U_{t-1}]$, 
$g_t={\tau_{\nabla^2 f(\x_t)}(\G_t)}/{\trcommon{\nabla^2 f(\x_t)}}$, 
$\lambda_t=\lambda(\x_t)$, $r_t=\|\x_{t+1}-\x_t\|_{\x_t}$,
and $\delta_t=d\varkappa g_t$.
The initial condition \eqref{eq:initial} means we can apply
Theorem~\ref{thm:srklinear} to achieve
\begin{align}\label{eq:linearrate}
    \nabla^2 f(\x_t)\preceq\G_t
    \qquad\text{and}\qquad
    \lambda_{t}\leq \left(1-\frac{1}{2\eta_0}\right)^{t}\lambda_0.
\end{align}
According to Theorem~\ref{thm:matrix}, we have
\begin{align}
\label{eq:srk_G-H}
    \EB_{t}\left[\tau_{\nabla^2 f(\x_{t+1})}(\G_{t+1})\right] \overset{\eqref{eq:measure_srk}}{\leq} \left(1-\frac{k}{d}\right) \tau_{\nabla^2 f(\x_{t+1})}(\tilde{\G}_{t}).
\end{align}
According to Lemma~\ref{lm:strong_self} with $\G=\G_t$,  $\x=\x_t$, $\x_{+}=\x_{t+1}$, and $r=r_t$, we have
\begin{align}
\label{eq:tau_neq}
    \tau_{\nabla^2f(\x_{t+1})}(\tilde{\G}_t)\overset{\eqref{eq:delta_SRK_lin}}{\leq} (1+Mr_t)^2(g_t + 2Mr_t)\trcommon{\nabla^2f(\x_{t+1})},
\end{align}
which means
\begin{align}
\label{eq:delta_t_condi}
\begin{split}
      & \EB_{t}[\delta_{t+1}] = \EB_{t}\left[\frac{d\varkappa \tau_{\nabla^2 f(\x_{t+1})}(\G_{t+1})}{\trcommon{\nabla^2 f(\x_{t+1})}}\right] 
       \overset{\eqref{eq:srk_G-H}}{\leq} \left(1-\frac{k}{d}\right)\left(\frac{d\varkappa \tau_{\nabla^2 f(\x_{t+1})}(\tilde{\G}_t)}{\trcommon{\nabla^2 f(\x_{t+1})}}\right)\\
      &\overset{\eqref{eq:tau_neq}}{\leq} \left(1-\frac{k}{d}\right)\left(\frac{d\varkappa(1+Mr_t)^2(g_t + 2Mr_t)\trcommon{\nabla^2f(\x_{t+1})} }{\trcommon{\nabla^2 f(\x_{t+1})}}\right)\\
      &\overset{\eqref{eq:linear-quadra}}{\leq}\left(1-\frac{k}{d}\right)(1+M\lambda_t)^2(\delta_t+2\varkappa dM\lambda_t).
\end{split}
\end{align}
According to Lemma~\ref{lm:GHneq} with $\G=\G_t$ and $\H=\nabla^2 f(\x_t)$, we have
\begin{align*}
    \nabla^2 f(\x_{t})\overset{\eqref{eq:linearrate}}{\preceq} \G_t \overset{\eqref{eq:GHneq}}{\preceq} (1+\delta_t)\nabla^2 f(\x_t).
\end{align*}
According to Lemma~\ref{lm:linear-quadra} with $\eta_t=1+\delta_t$, we have
 \begin{align}
 \label{eq:lambda_t_condi}
\begin{split}
        & \lambda_{t+1}
        \overset{\eqref{eq:linear-quadra}}{\leq} \left(1-\frac{1}{1+\delta_t}\right)\lambda_t +\frac{M\lambda_t^2}{2} + \frac{M^2\lambda_t^3}{4(1+\delta_t)}\\ 
        &= \left(1+\frac{M\lambda_t}{2}\right)\cdot\left(\frac{\delta_t + {M\lambda_t}/{2}}{1+\delta_t}\right)\lambda_t  \,\,\leq\left(1+M\lambda_t\right)^2\left(\delta_t+\frac{M\lambda_t}{2} \right)\lambda_t.
\end{split}
 \end{align}
According to Lemma \ref{lm:GHneq} with $\G=\G_0$, $\H=\nabla^2 f(\x_0)$, $\eta=\eta_0$, and the initial condition \eqref{eq:initial}, we have
\begin{align}
\label{eq:delta_0_bound}
    &\delta_0=\frac{d\varkappa\tau_{\nabla^2 f(\x_0)}(\G_0)}{\trcommon{\nabla^2 f(\x_0)}}\overset{\eqref{eq:GHneq_2}}{\leq}d\varkappa(\eta_0-1)\\
    \label{eq:initial_condi_theta}
    & \text{and}~~~ \delta_0+2d\varkappa M\lambda_0\overset{\eqref{eq:delta_0_bound}}{\leq} d\varkappa (\eta_0 -1) +2d\varkappa M\lambda_0\overset{\eqref{eq:initial}}{\leq}  d\varkappa\eta_0. 
\end{align}
Then we apply Lemma~\ref{lm:superlinear} 
on the random sequences $\{\lambda_t\}$ and $\{\delta_t\}$ with
$c_1=M$, $c_2={M}/{2}$, $c_3=2\varkappa dM$, $\alpha={d}/{k}$, $\beta=2\eta_0$, and $s=d\varkappa\eta_0$
to achieve inequality~\eqref{eq:E_lambda_srk}, where we can verify conditions \eqref{eq:supercondi} and \eqref{eq:supercondi_2} by equations \eqref{eq:delta_t_condi}, \eqref{eq:lambda_t_condi} and \eqref{eq:linearrate}, \eqref{eq:initial_condi_theta} respectively.

Now, we prove the two-stage convergence of \srk~methods as follows. 
\begin{enumerate}[label=(\alph*),topsep=0.05cm, itemsep=0.1cm, leftmargin=0.9cm] 
\item For \srk~method with randomized strategy such that $\left[\U_{t}\right]_{ij}\overset{\rm{i.i.d}}{\sim}{\fN(0,1)}$, we apply Lemma~\ref{lm:randomsequence_lin} with $X_t=\lambda_{t+1}/\lambda_t$, $\alpha = d/k$ and $c=2d\varkappa\eta_0$ to obtain 
 \begin{align}
 \label{eq:rasrk_super}
    \frac{ \lambda_{t+1}}{\lambda_t}  \leq \frac{2d^3\varkappa\eta_0}{k^2\delta}\left(1-\frac{k}{d+k}\right)^{t}
 \end{align}
holds for all $t$ with probability at least $1-\delta$. 
We take $t_0= \OM(d\ln(d\varkappa \eta_0/\delta)/k)$, which leads to
\begin{align}
\label{eq:rasrk_t0}
    \frac{2d^3\varkappa\eta_0}{k^2\delta}\left(1-\frac{k}{d+k}\right)^{t_0}\leq\frac{1}{2}.
\end{align}
Together with the linear convergence \eqref{eq:srklinear} provided by Theorem \ref{thm:srklinear}, we have
\begin{equation*}
\begin{split}
    &\lambda_{t+t_0}
    \overset{\eqref{eq:rasrk_super}}{\leq} \frac{2d^3\varkappa\eta_0}{k^2\delta}\cdot\left(1-\frac{k}{d+k}\right)^{t+t_0-1}\cdot\lambda_{t+t_0-1} \\
    & \!\!\overset{\eqref{eq:rasrk_t0}}{\leq} \left(1-\frac{k}{d+k}\right)^{t-1}\!\!\cdot\frac{\lambda_{t+t_0-1}}{2}\leq \cdots \overset{\eqref{eq:rasrk_super},\,\eqref{eq:rasrk_t0}}{\leq}\!\!\left(1-\frac{k}{d+k}\right)^{t(t-1)/2}\!\!\cdot\left(\frac{1}{2}\right)^{t}\lambda_{t_0}\\
    &\overset{\eqref{eq:srklinear}}{\leq} \left(1-\frac{k}{d+k}\right)^{t(t-1)/2}\cdot\left(\frac{1}{2}\right)^t\cdot\left(1-\frac{1}{2\eta_0}\right)^{t_0}\lambda_0
\end{split}
\end{equation*}
with probability at least $1-\delta$.

\item 
For \srk~method with greedy strategy such that $\U_t=\E_k(\tilde{\G}_t-\nabla^2 f(\x_{t+1}))$, we can take  $t_0=\OM\left(d\ln(\eta_0d\varkappa )/k\right)$ such that
\begin{align}
\label{eq:grsrk_t0}
  2d\varkappa\eta_0  \left(1-\frac{k}{d}\right)^{t_0}\leq \frac{1}{2}.
\end{align}
Together with the linear convergence \eqref{eq:srklinear} provided by Theorem \ref{thm:srklinear}, we have
\begin{align*}
    & \lambda_{t+t_0} \overset{\eqref{eq:E_lambda_srk}}{\leq} 2d\varkappa\eta_0\left(1-\frac{k}{d}\right)^{t+t_0-1}\lambda_{t+t_0-1} \\
    & \overset{\eqref{eq:grsrk_t0}}{\leq} \left(1-\frac{k}{d}\right)^{t-1}\cdot\frac{\lambda_{t+t_0-1}}{2}\leq \cdots \overset{\eqref{eq:E_lambda_srk},\,\eqref{eq:grsrk_t0}}{\leq} \left(1-\frac{k}{d}\right)^{t(t-1)/2}\cdot\left(\frac{1}{2}\right)^{t}\lambda_{t_0}\\
    &\,\overset{\eqref{eq:srklinear}}{\leq} \left(1-\frac{k}{d}\right)^{t(t-1)/2}\cdot\left(\frac{1}{2}\right)^t\cdot\left(1-\frac{1}{2\eta_0}\right)^{t_0}\lambda_0.
\end{align*}
\end{enumerate}
 \end{proof}

\section{The Proof of Theorem~\ref{thm:BFGS}}
\label{sec:bfgs_proof}
We first provide the linear convergence for the proposed block BFGS and DFP methods (Algorithm~\ref{alg:bfgs}).
\begin{lemma}
\label{lm:BFGSlinear}
Under the setting of Theorem~\ref{thm:BFGS},  Algorithm~\ref{alg:bfgs} holds that 
\begin{align}
\label{eq:BFGSlinear}
    \lambda(\vx_t)\leq \left(1-\frac{1}{2\eta_0}\right)^t\lambda(\vx_0)~~~\text{and}~~~ \nabla^2 f(\x_t)\preceq \G_t\preceq \frac{3\eta_0}{2}\nabla^2 f(\x_t)
\end{align}
for all $t\geq 0$.
\end{lemma}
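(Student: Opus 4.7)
The plan is to mirror the linear-rate argument from Theorem~\ref{thm:srklinear}, replacing the SR-$k$ sandwich preservation (Lemma~\ref{lm:sr1good}) with the block BFGS/DFP sandwich preservation (Lemma~\ref{lm:bfgsnofar}). Since both lemmas guarantee that if $\mA \preceq \mG \preceq \eta\mA$ then the updated estimator still obeys $\mA \preceq \mG_{+} \preceq \eta\mA$, the structural ingredient driving the proof is the same, and the rest of the argument should carry through with only notational changes.

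Concretely, I would set $\lambda_t \triangleq \lambda(\vx_t)$ and let $\tilde\eta_t$ denote the smallest $\eta \geq 1$ for which $\nabla^2 f(\vx_t) \preceq \mG_t \preceq \tilde\eta_t \nabla^2 f(\vx_t)$; the initial condition \eqref{eq:bfgsini} gives $\tilde\eta_0 \leq \eta_0$. First, Lemma~\ref{lm:linear-quadra} applied to the quasi-Newton step $\vx_{t+1} = \vx_t - \mG_t^{-1}\nabla f(\vx_t)$ yields both $r_t = \|\vx_{t+1} - \vx_t\|_{\vx_t} \leq \lambda_t$ and
\begin{align*}
\lambda_{t+1} \leq \left(1 - \frac{1}{\tilde\eta_t}\right)\lambda_t + \frac{M \lambda_t^2}{2} + \frac{M^2 \lambda_t^3}{4\tilde\eta_t}.
\end{align*}
Second, Lemma~\ref{lm:strong_self} transfers the sandwich from $\vx_t$ to $\vx_{t+1}$ via $\tilde{\mG}_t = (1 + Mr_t)\mG_t$, giving $\nabla^2 f(\vx_{t+1}) \preceq \tilde{\mG}_t \preceq (1+Mr_t)^2 \tilde\eta_t \nabla^2 f(\vx_{t+1})$. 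Third, invoking Lemma~\ref{lm:bfgsnofar} on the update $\mG_{t+1} = \bfgs(\tilde{\mG}_t, \nabla^2 f(\vx_{t+1}), \mU_t)$ or $\mG_{t+1} = \dfp(\tilde{\mG}_t, \nabla^2 f(\vx_{t+1}), \mU_t)$ preserves this sandwich, so
\begin{align*}
\tilde\eta_{t+1} \leq (1 + Mr_t)^2 \tilde\eta_t \leq (1 + M\lambda_t)^2 \tilde\eta_t.
\end{align*}

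Finally, the paired recursions on $\lambda_t$ and $\tilde\eta_t$ exactly match the hypotheses of Lemma~\ref{lm:linear} with $m_1 = m_2 = M$, and the initial condition \eqref{eq:bfgsini} is stronger than the one required there (since $\tfrac{\ln 2}{4 M\eta_0 d} \leq \tfrac{\ln(3/2)}{4 M\tilde\eta_0}$). Applying Lemma~\ref{lm:linear} then yields $\tilde\eta_t \leq 3\eta_0/2$ and $\lambda_t \leq (1 - 1/(2\eta_0))^t \lambda_0$, which is exactly \eqref{eq:BFGSlinear}. Since the proof is a direct port of the SR-$k$ argument, I do not expect any serious obstacle; the only subtlety worth double-checking is that the note in Algorithm~\ref{alg:bfgs} about using the full quasi-Newton step (rather than the monotone line-search of Algorithm~\ref{alg:bfgs-v1}) is compatible with the Lemma~\ref{lm:linear-quadra} hypothesis, which it is because the inductive bound $\lambda_t \leq \lambda_0$ keeps $M\lambda_t$ small enough throughout.
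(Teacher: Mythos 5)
Your proposal is correct and follows exactly the paper's own argument: the paper proves this lemma by repeating the proof of Theorem~\ref{thm:srklinear} verbatim, with Lemma~\ref{lm:sr1good} replaced by Lemma~\ref{lm:bfgsnofar}, which is precisely the substitution you make (including the final application of Lemma~\ref{lm:linear} with $m_1=m_2=M$). Your extra check that the initial condition \eqref{eq:bfgsini} implies the one required by Lemma~\ref{lm:linear} is a sensible detail the paper leaves implicit.
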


\begin{proof}
We can obtain this result by following the proof of Theorem~\ref{thm:srklinear} by replacing the steps of using Lemma~\ref{lm:sr1good} with using Lemma~\ref{lm:bfgsnofar}.
\end{proof}

We then provide some auxiliary lemmas for later analysis.
\begin{lemma}[\!\!{\cite[Lemma 4.8]{rodomanov2021greedy}}]
\label{lm:strong_self_bfgs}
If the twice differentiable function $f:\BR^d\to\BR$ is $M$-strongly self-concordant and $\mu$-strongly convex and the positive definite matrix $\G\in\RB^{d\times d}$ and $\vx\in\BR^d$ satisfy $\nabla^2 f(\x)\preceq \G\preceq \eta\nabla^2 f(\x)$ for some $\eta>1$, then we have
\begin{align}
 \sigma_{\nabla^2f(\x_{+})}(\tilde{\G})\leq (1+Mr)^2 (\sigma_{\nabla^2f(\x)}({\G})+2dMr)   \label{eq:delta_BFGS_lin}
\end{align}
for any $\vx,\vx_{+}\in\BR^d$ where $\tilde{\G}=(1+Mr)\G$ and $r=\|\x-\x_{+}\|_{\x}$. 
\end{lemma}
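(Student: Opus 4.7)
The plan is to prove the bound on $\sigma_{\nabla^2 f(\x_+)}(\tilde{\G})$ by unfolding its definition, controlling $\nabla^2 f(\x_+)$ in terms of $\nabla^2 f(\x)$ via strong self-concordance, and absorbing residual terms using an elementary inequality.

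First I would write out the definition directly. Using \eqref{eq:measurebfgs} and $\tilde{\G} = (1+Mr)\G$, we have
\begin{align*}
\sigma_{\nabla^2 f(\x_+)}(\tilde{\G}) = \tr{\nabla^2 f(\x_+)^{-1}\tilde{\G}} - d = (1+Mr)\tr{\nabla^2 f(\x_+)^{-1}\G} - d.
\end{align*}
The right-hand side depends on $\G$ and $\x_+$, so the next task is to replace $\nabla^2 f(\x_+)^{-1}$ by something involving $\nabla^2 f(\x)^{-1}$ in order to produce $\sigma_{\nabla^2 f(\x)}(\G)$.

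Next I would invoke Assumption~\ref{ass:strongself} with the substitutions $\x \leftarrow \x_+$, $\y \leftarrow \x$, $\z \leftarrow \x$, $\w \leftarrow \x_+$, which yields
\begin{align*}
\nabla^2 f(\x) - \nabla^2 f(\x_+) \preceq M\Norm{\x - \x_+}_{\x}\,\nabla^2 f(\x_+) = Mr\,\nabla^2 f(\x_+).
\end{align*}
Rearranging gives $\nabla^2 f(\x_+) \succeq \frac{1}{1+Mr}\nabla^2 f(\x)$, and inverting the order on positive-definite matrices gives $\nabla^2 f(\x_+)^{-1} \preceq (1+Mr)\nabla^2 f(\x)^{-1}$. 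Since $\G \succ \0$, I can then take the trace against $\G$ on both sides to obtain
\begin{align*}
\tr{\nabla^2 f(\x_+)^{-1}\G} \leq (1+Mr)\tr{\nabla^2 f(\x)^{-1}\G}.
\end{align*}
Substituting this into the first display produces $\sigma_{\nabla^2 f(\x_+)}(\tilde{\G}) \leq (1+Mr)^2\tr{\nabla^2 f(\x)^{-1}\G} - d$.

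Finally, I would rewrite $-d = -(1+Mr)^2 d + \bigl((1+Mr)^2 - 1\bigr)d$ so that
\begin{align*}
(1+Mr)^2\tr{\nabla^2 f(\x)^{-1}\G} - d = (1+Mr)^2 \sigma_{\nabla^2 f(\x)}(\G) + \bigl((1+Mr)^2 - 1\bigr)d,
\end{align*}
and absorb the second term into the desired form using the elementary inequality $(1+Mr)^2 - 1 = 2Mr + M^2 r^2 \leq (1+Mr)^2 \cdot 2Mr$, which reduces to $0 \leq 3M^2 r^2 + 2M^3 r^3$ and is immediate for $Mr \geq 0$. This yields $\sigma_{\nabla^2 f(\x_+)}(\tilde{\G}) \leq (1+Mr)^2\bigl(\sigma_{\nabla^2 f(\x)}(\G) + 2dMr\bigr)$, as desired.

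The only delicate step is choosing the right instantiation of Assumption~\ref{ass:strongself}; picking $\w = \x_+$ (rather than $\w = \x$) is what makes the one-sided relation $\nabla^2 f(\x_+)^{-1} \preceq (1+Mr)\nabla^2 f(\x)^{-1}$ come out with the multiplicative factor $(1+Mr)$ that combines cleanly with the prefactor from $\tilde{\G}$. Everything else is routine trace/monotonicity manipulation and an elementary scalar inequality.
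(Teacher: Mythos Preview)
Your proof is correct. Note that the paper does not actually supply a proof of this lemma: it is quoted verbatim from \cite[Lemma~4.8]{rodomanov2021greedy} and used as a black box in the analysis of Algorithm~\ref{alg:bfgs}. Your argument is essentially the standard one behind that cited result: expand $\sigma_{\nabla^2 f(\x_+)}(\tilde{\G})$ via the definition \eqref{eq:measurebfgs}, use one application of Assumption~\ref{ass:strongself} to get $\nabla^2 f(\x_+)^{-1}\preceq(1+Mr)\nabla^2 f(\x)^{-1}$, and then absorb the additive leftover $((1+Mr)^2-1)d$ into $(1+Mr)^2\cdot 2dMr$ by a scalar inequality. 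Each step checks out, including the trace monotonicity $\tr{A\G}\leq\tr{B\G}$ for $A\preceq B$ and $\G\succ\0$, and your final scalar bound $(1+Mr)^2-1\leq 2Mr(1+Mr)^2$ is equivalent to $0\leq 3M^2r^2+2M^3r^3$. Your remark about the instantiation of Assumption~\ref{ass:strongself} (taking $\w=\x_+$) is the one nontrivial choice, and you made it correctly.
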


\begin{lemma}\label{lm:GHneq_bfgs}
For any positive definite symmetric matrices $\G,\H\in\RB^{d\times d}$ such that~$\H\preceq\G$, 
we have
\begin{align}
\label{eq:GHneq_bfgs}
\G\preceq (1+\sigma_{\H}(\G))\H.
\end{align}
If it further satisfies that $\G\preceq\eta\H$ for some $\eta \geq 1$, then we have
\begin{align}
\label{eq:GHneq_2_bfgs}
  \sigma_{\H}(\G)\leq d(\eta -1).
\end{align}
\end{lemma}
\begin{proof}
We obtain inequality \eqref{eq:GHneq_bfgs} by statement 1) of Equation (42) by Lin et al. \cite[Lemma 25]{lin2021greedy}.
We prove inequality \eqref{eq:GHneq_2_bfgs} by the definition of $\sigma_{\H}(\G)$ as follows
\begin{align*}
      \sigma_{\H}(\G)&=\tr{\H^{-1}(\G-\H)}=\tr{\H^{-1/2}(\G-\H)\H^{-1/2}}\\
      &\leq \tr{(\eta-1)\H^{-1/2}\H\H^{-1/2}} = d(\eta-1).
\end{align*}
\end{proof}

Now, we present the proof for Theorem~\ref{thm:BFGS}.
\begin{proof}
Denote $\delta_t\triangleq\sigma_{\nabla^2 f(\x_t)}(\G_t)$ and $\lambda_t\triangleq \lambda(\x_t)$. 
According to Lemma~\ref{lm:BFGSlinear} and Lemma~\ref{lm:GHneq_bfgs} with $\mG=\mG_t$ and $\mH=\nabla^2 f(\vx_t)$, we have
\begin{align}
\label{eq:BFGScondi_GH}
   \nabla^2 f(\x_t)\overset{\eqref{eq:BFGSlinear}}{\preceq} \G_t \overset{\eqref{eq:GHneq_bfgs}}{\preceq} (1+\delta_t) \nabla^2 f(\x_t).
\end{align}
According to Theorem~\ref{thm:bfgs} with $\mA=\nabla^2 f(\vx_{t+1})$ and $\mG=\tilde\mG_t$, we have
\begin{align}
\label{eq:BFGSconverge1}
    \EB_t[\delta_{t+1}] = \EB_t\big[\sigma_{\nabla^2 f(\x_{t+1})}(\G_{t+1})\big] \overset{\eqref{eq:bfgssigma}}{\leq} \left(1-\frac{k}{d\varkappa}\right)\sigma_{\nabla^2 f(\x_{t+1})}(\tilde{\G}_t).
\end{align}
According to Lemma~\ref{lm:strong_self_bfgs} with $\mG=\mG_t$ and $\vx=\vx_t$, we have
\begin{align}
\label{eq:BFGSconverge2}
  \sigma_{\nabla^2 f(\x_{t+1})}(\tilde{\G}_t) \overset{\eqref{eq:delta_BFGS_lin}}{\leq}(1+Mr_t)^2(\delta_t+2dMr_t).
\end{align}
Thus, we can obtain following result
\begin{align}
\label{eq:delta_t_condi_BFGS}
\begin{split}
    &\EB_{t}[\delta_{t+1}] \overset{\eqref{eq:BFGSconverge1}}{\leq} \left(1-\frac{k}{d\varkappa}\right)\sigma_{\nabla^2 f(\x_{t+1})}(\tilde{\G}_{t})\\
    &\overset{\eqref{eq:BFGSconverge2}}{\leq}
    \left(1-\frac{k}{d\varkappa}\right) (1+Mr_t)^2(\delta_t+2dMr_t)\\
    &\overset{\eqref{eq:linear-quadra}}{\leq} \left(1-\frac{k}{d\varkappa}\right)(1+M\lambda_t)^2(\delta_t+2dM\lambda_t).
\end{split}
\end{align}
We follow Lemma~\ref{lm:linear-quadra} with $\eta_t = 1+\delta_t$ and the derivation of equation \eqref{eq:lambda_t_condi} to achieve
\begin{align}\label{eq:lambda_t_condi_BFGS}
    \lambda_{t+1} \leq (1+M\lambda_t)^2\left(\delta_t+\frac{M\lambda_t}{2}\right)\lambda_t.
\end{align}
According to Lemma~\ref{lm:BFGSlinear}, we have 
\begin{align}
\label{eq:lambda_linear_condi_BFGS}
    \lambda_{t}\leq \left(1-\frac{1}{2\eta_0}\right)^{t}\lambda_0.
\end{align}
According to Lemma~\ref{lm:GHneq_bfgs} with $\G=\G_0$,  $\H=\nabla^2 f(\x_0)$, $\eta=\eta_0$, and the initial condition (\ref{eq:bfgsini}), we have 
\begin{align}
\label{eq:theta_0_condi_BFGS}
    \delta_0=\sigma_{\nabla^2 f(\x_0)}(\G_0) \overset{\eqref{eq:GHneq_2_bfgs}}{\leq} d(\eta_0-1)\qquad\text{and}\qquad\delta_0+2dM\lambda_0\overset{\eqref{eq:bfgsini}}{\leq} d\eta_0.
\end{align}
Then we apply Lemma~\ref{lm:superlinear} 
on the random sequences $\{\lambda_t\}$ and $\{\delta_t\}$ with
$c_1=M$, $c_2=M/2$, $c_3=2dM$, $\alpha=d\varkappa/k$, $\beta={2\eta_0}$, and $s=d\eta_0$ to finish the proof, where we can verify conditions \eqref{eq:supercondi} and \eqref{eq:supercondi_2} by equations \eqref{eq:delta_t_condi_BFGS}, \eqref{eq:lambda_t_condi_BFGS} and \eqref{eq:lambda_linear_condi_BFGS}, \eqref{eq:theta_0_condi_BFGS} respectively.
\end{proof}

\section{The proof of Theorem \ref{thm:fasterBFGS}}
\label{app:fasterBFGSproof}
\begin{proof}
We define $\delta_t\triangleq\sigma_{\nabla^2 f(\x_{t})}(\G_{t})$ and $\lambda_t\triangleq \lambda(\x_t)$.   
The proof of this theorem can follow the counterpart of Theorem~\ref{thm:BFGS} by using  $\tilde{\LL}_{t}^{\top}\U_t$ to replace $\mU_t$ (line \ref{line:fastBFGS} in Algorithm~\ref{alg:fasterbfgs}). 
This leads to the faster rate for matrix approximation, i.e., applying Theorem \ref{thm:bfgs-1} with $\G=\tilde{\G}_t$, $\A = \nabla^2 f(\x_{t+1})$, and $\mU=\mU_t$ to achieve
\begin{align}\label{eq:fasterBFGSconverge1}
    \EB_t[\delta_{t+1}] = \EB_t[\sigma_{\nabla^2 f(\x_{t+1})}(\G_{t+1})] \overset{\eqref{eq:fasterbfgssigma-1}}{\leq} \left(1-\frac{k}{d}\right)\sigma_{\nabla^2 f(\x_{t+1})}(\tilde{\G}_t).
\end{align}
We follow the proof of Theorem~\ref{thm:BFGS}
by replacing equation~\eqref{eq:BFGSconverge1} with  \eqref{eq:fasterBFGSconverge1} and replacing all the terms of $\varkappa d$ with $d$ to finish the proof of Theorem \ref{thm:fasterBFGS}.
\end{proof}

\bibliography{references}

\end{document}